\theoremstyle{plain} 
\newtheorem{theorem}[equation]{Theorem} %[section]
\newtheorem{lemma}[equation]{Lemma}
\newtheorem{proposition}[equation]{Proposition}
\newtheorem{corollary}[equation]{Corollary}
\newtheorem*{proposition*}{Proposition}
 \newtheorem*{theorem*}{Theorem}
 \newtheorem*{corollary*}{Corollary}
\theoremstyle{remark}
 \newtheorem{remark}[equation]{Remark}
 \newtheorem{example}[equation]{Example}
 \newtheorem{notation}[equation]{Notation}
\theoremstyle{definition}
 \newtheorem{definition}[equation]{Definition}
 \newtheorem*{definition*}{Definition}
 \newtheorem*{acknowledgment}{Acknowledgment}
\newcommand{\N}{\mathbb{N}}
\newcommand{\Z}{\mathbb{Z}}
\newcommand{\K}{\mathscr{K}}
\newcommand{\cat}[1]{\mathscr{#1}}
\newcommand{\p}{\mathscr{P}}
\newcommand{\T}{\otimes}
\newcommand{\1}{\mathbb{1}}
\newcommand{\D}{\text{D}^{\text{perf}}}
\newcommand{\kk}{\mathbb{k}}
\newcommand{\V}{\mathcal{V}}
\DeclareMathOperator\Spc{Spc}
\DeclareMathOperator\Spec{Spec}
\DeclareMathOperator\Proj{Proj}
\DeclareMathOperator\pr{pr}
\DeclareMathOperator\inc{incl}
\DeclareMathOperator\im{im}
\newcommand{\Mod}[2]{#2\!-\!\operatorname{Mod}_{#1}}
\newcommand{\supp}{\operatorname{supp}}
\newcommand{\Res}[2]{\operatorname{Res}_{#2}^{#1}}
\newcommand{\M}[1]{\Mod {\K}{#1}}
\renewcommand{\mod}[1]{\kk#1\!-\!\operatorname{mod}}
\newcommand{\stab}[1]{\kk#1\!-\!\operatorname{stab}}
\newcommand{\Db}[1]{\operatorname{D}^b(\mod {#1})}
\newcommand{\norm}[1]{\operatorname{norm}^G_{#1}}
\newcommand{\divides}{\bigm|}
\newcommand{\ndivides}{%
  \mathrel{\mkern.5mu % small adjustment
    % superimpose \nmid to \big|
    \ooalign{\hidewidth$\big|$\hidewidth\cr$\nmid$\cr}%
  }%
}
\numberwithin{equation}{section}
\begin{document}

\title{Quasi-Galois theory in symmetric-monoidal categories}
\author{Bregje Pauwels}

\begin{abstract}
	Given a ring object $A$ in a symmetric monoidal category, we investigate what it means for the extension $\1\rightarrow A$ to be (quasi-)Galois. In particular, we define splitting ring extensions and examine how they occur. 
	Specializing to tensor-triangulated categories, we study how extension-of-scalars along a quasi-Galois ring object affects the Balmer spectrum. We define what it means for a separable ring to have constant degree, which is a necessary and sufficient condition for the existence of a quasi-Galois closure. 
	Finally, we illustrate the above for separable rings occurring in modular representation theory. 
\end{abstract}

\maketitle
\tableofcontents

\section*{Introduction}
%fields
Classical Galois theory is the study of field extensions $l/k$ through the group of automorphisms of $l$ that fix~$k$. For an irreducible and separable polynomial $f\in k[x]$, the \emph{splitting field} of $f$ over $k$ is the smallest extension over which $f$ decomposes into linear factors. 
In other words, the splitting field $l$ of $f$ is the smallest extension of $k$ such that~$l\T_k k[x]/(f)\cong l^{\times\deg(f)}$.
The field extension $l/k$ is often called \emph{quasi-Galois}\footnote{see Bourbaki~\cite[\S 9]{bourbaki}. In the literature, a quasi-Galois extension is sometimes called normal or Galois, probably because these notions coincide when $l/k$ is separable and finite.} if $l$ is the splitting field for some polynomial in~$k[x]$.

%translate
In this paper, we adapt the above ideas to the context of ring objects in a symmetric monoidal category $(\K,\T,\1)$, with special emphasis on tensor-triangulated categories.
That is, our analogue of a field extension will be a monoid $\eta : \1 \rightarrow A$ in~$\K$ with associative commutative multiplication $\mu:A\T A\rightarrow A$. We call $A$ a \emph{ring in~$\K$}, and moreover assume that $A$ is \emph{separable}, which means $\mu$ has an $A,A$-bilinear right inverse $A\rightarrow A\T A$.

%examples
Separable ring objects play an important(though at times invisible) role in various areas of mathematics.
In algebraic geometry, for instance, they appear as \'etale extensions of quasi-compact and quasi-separated schemes, see~\cite[Th.3.5]{bneemanthomason}. More precisely, given a separated \'etale morphism $f:V\rightarrow X$, the object $A:=Rf_{*}(\mathcal{O}_V)$ in $D^{\text {qcoh}}(X)$ is a separable ring, and we can understand $D^{\text {qcoh}}(V)$ as the category of $A$-modules in~$D^{\text {qcoh}}(X)$.
%rep 
In representation theory, we can let $\K(G)$ be the (derived or stable) module category of a group $G$ over a field~$\kk$, and consider a subgroup $H<G$. In ~\cite{bstacks}, Balmer showed there is a separable ring $A^G_H$ in $\K(G)$  such that the category of $A^G_H$-modules in $\K(G)$ coincides with $\K(H)$, and such that the restriction functor $\Res{G}{H}:\K(G)\rightarrow \K(H)$ is just extension-of-scalars along~$A^G_H$.
In the same vein, extension-of-scalars along a separable ring recovers restriction to a subgroup in equivariant stable homotopy theory, in equivariant $KK$-theory and in equivariant derived categories, see~\cite{bds}.
For more examples of separable rings in stable homotopy categories, we refer to~\cite{bakerrichter}~and~\cite{rognes}.

%start%%%%%%%%%%%%%%%%%%%%%%%%%%%%%%%
Thus motivated, we study how much Galois theory carries over. 
%ausgold 
The generalisation of Galois theory from fields to rings originated with Auslander and Goldman in~\cite[App.]{AG}. 
For more generalizations in various directions, see~\cite{chasesweedler, hess, noncommutative}. In particular, Rognes~\cite{rognes} introduced a Galois theory up-to-homotopy.

Recall that we call a ring $A$ in $\K$ \emph{indecomposable} if it doesn't decompose as a product of nonzero rings. 
%degree
Separable ring objects have a well-behaved notion of degree, see~\cite{bdegree}, and our first Galois-flavoured result~(Theorem~\ref{th:nrmorphisms}) shows that the number of ring endomorphisms of a separable indecomposable ring in $\K$ is bounded by its degree.
%def:galois
If $A$ is a ring in $\K$ and $\Gamma$ is a group of ring automorphisms of $A$, we call $A$ \emph{quasi-Galois in $\K$ with group $\Gamma$} if
the $A$-algebra homomorphism
$$\lambda_{\Gamma}: 
A\T A \longrightarrow \prod_{\gamma\in \Gamma} A$$
defined by $\pr_{\gamma} \lambda_{\Gamma}=\mu (1 \T \gamma)$ is an isomorphism.
An indecomposable ring $A$ is quasi-Galois in $\K$ for some group $\Gamma$ if and only if $A$ has exactly $\deg(A)$ ring endomorphisms in~$\K$ (see~Theorem~\ref{th:galoisequivalences}). In that case, $\Gamma$ contains all ring endomorphisms of $A$ in~$\K$.

\begin{definition*}%(\ref{def:splittingring}).
	Let $A$ and $B$ be rings of finite degree in~$\K$. We say $B$ \emph{splits} $A$ if $B\T A\cong B^{\times \deg(A)}$ as (left) $B$-algebras in $\K$.  
	We call an indecomposable ring $B$ a \emph{splitting ring} of~$A$ if~$B$ splits~$A$ and any ring morphism $C\rightarrow B$, where~$C$ is an indecomposable ring splitting~$A$, is an isomorphism. 
\end{definition*}

Under mild conditions on $\K$,~Corollary~\ref{cor:galoissplitring} shows $B$ is quasi-Galois in $\K$ if and only if $B$ is a splitting ring of some separable ring $A$ in $\K$; our terminology matches classical field theory.
Moreover, Proposition~\ref{prop:splittingring} shows that
every separable ring in~$\K$ has (possibly multiple) splitting rings.

%tt
If in addition, $\K$ is tensor-triangulated, we can say more about the way splitting rings arise. 
%spec 
In~\cite{bspectrum}, Balmer introduced the \emph{spectrum} of a tensor-triangulated category $\K$, a topological space in which every $x\in\K$ has a \emph{support} $\supp(x)\subset\Spc(\K)$.
The Balmer spectrum provides an algebro-geometric approach to the study of triangulated categories, and a complete description of the spectrum is equivalent to a classification of the thick $\T$-ideals in the category.

%assume
For the remainder of the introduction, we assume $\K$ is tensor-triangulated
and nice (say, $\Spc(\K)$ is Noetherian or $\K$ satisfies Krull-Schmidt).
%mod
If $A$ is a separable ring in $\K$,
the \emph{Eilenberg-Moore category} $\M A$ of $A$-modules in $\K$ remains tensor-triangulated and extension of scalars is exact, see~\cite[Cor.4.3]{bseparability}.
We can thus extend scalars along a separable ring without leaving the tensor-triangulated world or descending to a model category.
%specmod
If $A$ is quasi-Galois with group $\Gamma$ in~$\K$, then $\Gamma$ acts on $\M A$ and on the spectrum~$\Spc(\M A)$. Theorem~\ref{th:galoisspectrum} shows that $\supp(A)\subset \Spc(\K)$ is given by the $\Gamma$-orbits of $\Spc(\M A)$.
In particular, 
we recover $\Spc(\K)$ from $\Spc(\M A)$ if $\supp (A)=\Spc(\K)$, which happens exactly when $A\T f=0$ implies $f$ is $\T$-nilpotent for every morphism $f$ in $\K$.

%split
Recall that for a quasi-Galois field extension $l/k$, any irreducible polynomial $f\in k[x]$ with a root in $l$ splits in $l$, see~\cite{bourbaki}. 
Proposition~\ref{prop:galoissplit} provides us with a tensor triangular analogue:

\begin{proposition*}
	Let $A$ be a separable ring in $\K$ such that the spectrum $\Spc(\M A)$ is connected, and suppose $B$ is an $A$-algebra with $\supp(A)=\supp(B)$. If $B$ is quasi-Galois in~$\K$, then $B$ splits~$A$.
\end{proposition*}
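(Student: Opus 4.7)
The plan is to use the $\Gamma$-orbit of the structure map $\phi:A\to B$ to construct an orthogonal family of idempotents in $B\T A$, and then to show via Galois descent and the indecomposability of $A$ that the complementary idempotent vanishes.

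Let $\Gamma$ denote the group with respect to which $B$ is quasi-Galois. For each $\gamma\in\Gamma$, the composite $\gamma\phi:A\to B$ is a ring morphism, corresponding to the $B$-algebra morphism $\psi_\gamma := \mu_B(1\T\gamma\phi): B\T A\to B$, which is split by the unit of $B\T A$ (since $\psi_\gamma\circ\eta_{B\T A}=\eta_B$). Setting $\Gamma_\phi := \{\gamma:\gamma\phi=\phi\}$, the distinct $\psi_\gamma$'s are indexed by $\Gamma/\Gamma_\phi$. Now $B$ is indecomposable (a quasi-Galois ring is a splitting ring, hence indecomposable, by Corollary~\ref{cor:galoissplitring}) and $B\T A$ is separable over $B$, so each $\psi_\gamma$ determines a unique idempotent $e_\gamma\in\Hom_\K(\1,B\T A)$ whose image is the corresponding copy of $B$; distinct cosets yield orthogonal idempotents (distinct sections of the finite covering $\Spc(\M{B\T A})\to\Spc(\M B)$ have disjoint clopen images). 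Writing $s := \sum_{\bar\gamma\in\Gamma/\Gamma_\phi}e_\gamma$ and $e_R := 1-s$ gives a $B$-algebra decomposition $B\T A\cong B^{\times|\Gamma/\Gamma_\phi|}\times R$.

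The group $\Gamma$ acts on $B\T A$ via its action on the first factor, permuting the $\psi_\gamma$'s (hence the $e_\gamma$'s) transitively over $\Gamma/\Gamma_\phi$, so $e_R$ is $\Gamma$-invariant. Invoking Galois descent along the quasi-Galois ring $B$ yields an isomorphism $A\cong (B\T A)^\Gamma$, and the hypothesis $\supp A=\supp B$ enters here to ensure the descent captures all of $A$ rather than a localization of $A$ to a proper subset of $\supp A$. The invariant idempotent $e_R$ therefore descends to an idempotent of $\Hom_\K(\1,A)$. Since $\Spc(\M A)$ is connected, $A$ is indecomposable and has only trivial idempotents $0,1_A$; the case $e_R=1$ would force $s=0$, contradicting that $s$ is a non-empty sum of pairwise orthogonal nonzero idempotents in a commutative ring. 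Hence $e_R=0$, so $B\T A\cong B^{\times|\Gamma/\Gamma_\phi|}$; comparing degrees over $B$ (with $\deg_B(B\T A)=\deg A$ by extension-of-scalars preservation of degree) gives $|\Gamma/\Gamma_\phi|=\deg A$, proving $B$ splits $A$.

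The most delicate step is the Galois descent $A\cong(B\T A)^\Gamma$ in the tt-setting. The notion of $\Gamma$-invariants needs a precise categorical formulation---most naturally via an equalizer built from the $\Gamma$-action and the quasi-Galois isomorphism $B\T B\cong\prod_\Gamma B$---and the hypothesis $\supp A=\supp B$ is exactly what makes this descent an isomorphism rather than a localization to $\supp B$. Once descent is in place, the remaining idempotent and degree manipulations are routine.
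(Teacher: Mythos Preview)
Your approach has two real gaps.

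First, the claim that ``a quasi-Galois ring is a splitting ring, hence indecomposable, by Corollary~\ref{cor:galoissplitring}'' is backwards: that corollary \emph{assumes} indecomposability. Quasi-Galois rings need not be indecomposable; for instance $\1^{\times n}$ is quasi-Galois with group $\Z_n$. This is fixable---the paper reduces to the indecomposable case using Corollary~\ref{cor:galoisfactors} and Proposition~\ref{prop:lcddecomposition}---but you have not done so.

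Second, and more seriously, the Galois-descent step is not justified, and you flag it yourself. The only descent statement available in this setting is the equalizer $\1\hookrightarrow B\rightrightarrows B\T B$ from~\cite[Prop.~2.12]{bdescent}, which requires $B$ to be \emph{faithful}. The hypothesis $\supp(A)=\supp(B)$ gives only nil-faithfulness of $B$ over $\supp(A)$, which is strictly weaker; there is no result in the paper (or an obvious one elsewhere) that upgrades a $\Gamma$-invariant idempotent in $B\T A$ to an idempotent in $A$ under this weaker hypothesis. Your assertion that ``$\supp A=\supp B$ is exactly what makes this descent an isomorphism rather than a localization'' is precisely the thing that needs proof, and absent a faithful extension it may simply fail. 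Your permutation claim for the $e_\gamma$ is also loose: the action $\gamma'\T 1$ is a ring automorphism of $B\T A$ but is not $B$-linear, so one must argue via the intrinsic ring factorization rather than via the $B$-algebra maps $\psi_\gamma$ directly.

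The paper avoids descent entirely by working on the other side: it studies $F_A(B)$ in $\M A$ rather than $F_B(A)$ in $\M B$. Since $B$ is quasi-Galois, so is $F_A(B)$ (Lemma~\ref{lem:qgextensions}); the ring $\overline{B}$ is an indecomposable factor of $F_A(B)$ (Corollary~\ref{cor:BringfactorF_A(B)}); connectedness of $\Spc(\M A)$ together with Proposition~\ref{prop:lcddecomposition} and Lemma~\ref{lem:ttselfsplitfactors} forces $F_A(B)\cong \overline{B}^{\times t}$. Forgetting the $A$-action and extending along $B$ then yields $F_B(A)\cong \1_B^{\times t}$ by a direct degree count. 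The connectedness hypothesis is used for the support argument in $\M A$, not for any descent of idempotents, and the equal-support hypothesis enters only through Lemma~\ref{lem:degreeproperties} to identify $t=\deg(A)$.
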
 

%cd
Finally, Theorem~ \ref{th:splittingring} shows that certain rings in $\K$ have a quasi-Galois closure. Given $\p\in \Spc(\K)$, we consider \emph{the local category $\K_{\p} $ at $\p$}, that is the idempotent completion of the Verdier quotient $\K\diagup \p$. 
We say a ring $A$ has \emph{constant degree in~$\K$}  if the degree of $A$ as a ring in $\K_{\p}$ is the same for every prime $\p\in \supp(A)$.

%closure
\begin{theorem*} %\emph{(\ref{th:splittingring})}
	If $A$ has constant degree in $\K$ and the spectrum $\Spc(\M A)$ is connected, then $A$ has a unique splitting ring $A^{*}$. Furthermore, $\supp(A)= \supp(A^{*})$ and  $A^{*}$ is the quasi-Galois closure of~$A$ in~$\K$. 
	That is, for any $A$-algebra $B$ that is quasi-Galois in~$\K$ with $\supp(A)=\supp(B)$, there exists a ring morphism $A^{*}\rightarrow $B.
\end{theorem*}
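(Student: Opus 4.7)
Existence of $A^{*}$ and its quasi-Galois structure are handed to us: Proposition~\ref{prop:splittingring} gives a splitting ring $A^{*}$, and Corollary~\ref{cor:galoissplitring} shows $A^{*}$ is quasi-Galois in $\K$, with group $\Gamma$ say. The splitting $A^{*}\T A\cong(A^{*})^{\times\deg(A)}$ makes $A^{*}$ into an $A$-algebra by composing the unit $A\to A^{*}\T A$ with any projection. It remains to prove the support equality, the universal property, and uniqueness; I plan to derive uniqueness from the universal property at the end.

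\textbf{Support equality.} The inclusion $\supp(A^{*})\subset\supp(A)$ is immediate from the splitting isomorphism. For the reverse inclusion I would pass to $\M A$ via extension of scalars $F_{A}$: the splitting becomes $F_{A}(A^{*})\cong(A^{*})^{\times\deg(A)}$ in $\M A$, and by Balmer's functoriality of support, $\supp_{\M A}(F_{A}(A^{*}))=\Spc(F_{A})^{-1}(\supp(A^{*}))$. Using constant degree of $A$ and Theorem~\ref{th:galoisspectrum} applied to the quasi-Galois ring $A^{*}$, I would argue that $\supp_{\M A}(F_{A}(A^{*}))$ is clopen in $\Spc(\M A)$. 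Connectedness of $\Spc(\M A)$ together with $A^{*}\neq 0$ then forces this clopen set to equal all of $\Spc(\M A)$, and since the image of $\Spc(F_{A})$ is $\supp(A)$, we conclude $\supp(A)\subset\supp(A^{*})$.

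\textbf{Universal property, uniqueness, and main obstacle.} Let $B$ be a quasi-Galois $A$-algebra with $\supp(B)=\supp(A)$; Proposition~\ref{prop:galoissplit} yields $B\T A\cong B^{\times\deg(A)}$ as $B$-algebras. To construct a ring morphism $A^{*}\to B$, I would analyse the ring $B\T A^{*}$ as simultaneously a $B$-algebra (via the left unit) and an $A^{*}$-algebra (via the right unit): using both the splitting of $A$ by $B$ and the quasi-Galois structure of $A^{*}$, one should isolate a $B$-algebra idempotent in $B\T A^{*}$ whose associated $B$-algebra projection $B\T A^{*}\to B$, composed with the right unit $A^{*}\to B\T A^{*}$, gives the desired map. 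Uniqueness then follows by applying this universal property to $B=A^{**}$, another splitting ring: the resulting ring morphism $A^{*}\to A^{**}$ must be an isomorphism by the minimality clause in the definition of splitting ring. The heart of the argument is precisely the construction of $A^{*}\to B$: the tensor product in $\K$ is not a coproduct of commutative rings, so one cannot freely extend the pair of ring maps $A^{*}\to B\T A^{*}\leftarrow B$ to a ring map out of $B\T A^{*}$; pinning down the correct $B$-algebra idempotent is the main technical obstacle, with a secondary difficulty being the clopen argument for support equality, where the constant-degree hypothesis is essential (otherwise distinct pieces of $\supp(A)$ could be split by indecomposable rings with disjoint supports).
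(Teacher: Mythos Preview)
Your proposal misses the two lemmas that make the paper's proof almost immediate, and in their absence your substitute arguments have real gaps.

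\textbf{Support equality and uniqueness.} The paper does not argue via a clopen subset of $\Spc(\M A)$; it simply invokes Proposition~\ref{prop:cdsplittingring}. Because $\Spc(\M A)$ is connected and $f_A$ surjects onto $\supp(A)$, the support $\supp(A)$ is connected; together with constant degree this forces (via Proposition~\ref{prop:lcddecomposition} and Lemma~\ref{lem:ttselfsplitfactors}) every indecomposable factor of $A^{[d]}$ to be isomorphic and to have support equal to $\supp(A)$. Hence $A^{[d]}\cong (A^{*})^{\times k}$ with $\supp(A^{*})=\supp(A)$, and uniqueness falls out at the same time. Your clopen argument, by contrast, is not clearly correct: you invoke Theorem~\ref{th:galoisspectrum} for $A^{*}$, but that theorem describes $\supp(A^{*})$ as a quotient of $\Spc(\M{A^{*}})$, and it is not apparent how this yields openness of $f_A^{-1}(\supp(A^{*}))$ in $\Spc(\M A)$. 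You would need something like Proposition~\ref{prop:lcddecomposition} anyway, at which point Proposition~\ref{prop:cdsplittingring} is the cleaner packaging.

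\textbf{Universal property.} Here you take a genuinely harder route and do not close it. The paper's argument is one line: since $B$ splits $A$, Remark~\ref{rem:BsplitsA} (equivalently Proposition~\ref{prop:An}) produces a ring morphism $A^{[d]}\to B$; composing with any inclusion $A^{*}\hookrightarrow A^{[d]}\cong(A^{*})^{\times k}$ gives $A^{*}\to B$. Your idempotent-hunting in $B\otimes A^{*}$ is unnecessary, and---more to the point---it depends on structural facts about $A^{*}$ (e.g.\ that it sits as a ring factor in $A^{[d]}$) that you have not established because you deferred uniqueness to the end. The ordering matters: once you know $A^{[d]}\cong(A^{*})^{\times k}$ up front, both the universal property and uniqueness are trivial; trying to derive uniqueness \emph{from} the universal property forces you to re-prove support equality for every candidate splitting ring and leaves you without the factor decomposition of $A^{[d]}$ when you need it.
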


%rep
We conclude this paper by computing degrees and splitting rings for the separable rings $A^G_H:=\kk(G/H)$ mentioned before.
Here, $H<G$ are finite groups and $\kk$ is a field with characteristic $p$ dividing~$|G|$.
The degree of $A^G_H$ in $\Db G$ is simply $[G:H]$ and $A^G_H$ is quasi-Galois if and only if $H$ is normal in~$G$. Accordingly, the quasi-Galois closure of $A^G_H$ in $\Db G$ is the ring $A^G_N$, where $N$ is the normal core of $H$ in $G$ (see~Corollary~\ref{cor:Dbqg}).
On the other hand, Proposition~\ref{prop:stabqg}, shows the degree of $A^G_H$ in $\stab G$ is the greatest $0\leq n\leq [G:H]$ such that there exist distinct $[g_1], \ldots, [g_n]$ in $H\backslash G$ with $p$ dividing $|H^{g_1}\cap\ldots\cap H^{g_n}|$.   
In that case, the splitting rings of $A^G_H$ are 
exactly the 
$A^G_{H^{g_1}\cap\ldots\cap H^{g_{n}}}$ with $g_1,\ldots, g_n$ as above.

%Finally, $A^G_H$ is quasi-Galois in $\stab G$ if and only if $p$ does not divide $\lvert H\cap H^g \cap H^{gh}\rvert$
%for $g\in G- H$ and  $h\in H- H^g$.

\begin{acknowledgment}
	I am very thankful to my advisor Paul Balmer for valuable ideas and helpful comments.
\end{acknowledgment}

\section{The Eilenberg-Moore Category}

\begin{definition}
	Let $\K$ be an additive category. We say $\K$ is \emph{idempotent-complete} if for all $x\in\K$, any morphism $e:x\rightarrow x$ with $e^2=e$  yields a decomposition $x\cong x_1\oplus x_2$ under which $e$ becomes~$\left( \begin{smallmatrix} 1&0 \\ 0&0 \end{smallmatrix} \right)$. 
	Every additive category $\K$ can be embedded in an idempotent-complete category $\K^{\natural}$ in such a way that $\K\hookrightarrow \K^{\natural}$ is fully faithful and every object in $\K^{\natural}$ is a direct summand of some object in~$\K$. We call $\K^{\natural}$ the \emph{idempotent-completion} of~$\K$, and~\cite{bidempotentcompletion} shows that $\K^{\natural}$ stays triangulated if $\K$ was.
\end{definition}

\begin{notation}
	Throughout, $( \K, \T, \1)$ denotes an %essentially small
	idempotent-complete symmetric monoidal category.
	For objects $x_1,\ldots,x_n$  in $\K$ and a permutation $\tau\in S_n$, we also write $\tau:x_1\T\ldots \T x_n\rightarrow x_{\tau(1)}\T\ldots\T x_{\tau(n)}$ to denote the isomorphism that permutes the tensor factors.
\end{notation}

\begin{definition}
	A \emph{ring object} $A\in \K$ is a monoid $(A,\mu:A\T A\rightarrow A, \eta:\1\rightarrow A)$ with associative multiplication $\mu$ and two-sided unit $\eta$. We call $A$ \emph{commutative} if $\mu (12)=\mu$. All ring objects in this paper will be commutative and we often simply call $A$ a \emph{ring in~$\K$}. For rings $A$ and $B$ in $\K$, a \emph{ring morphism} $f:A\rightarrow B$ is a morphism  in $\K$ that is compatible with the ring structure. 

	A \emph{(left) $A$-module} is a pair $(x\in\K,\varrho:A\T x\rightarrow x)$,  where the action $\varrho$ is compatible with the ring structure in the usual way. \emph{Right $A$-modules} and \emph{$A,A$-bimodules} are defined analoguesly.  

	The \emph{Eilenberg-Moore category} $\M A$ has left $A$-modules as objects and $A$-linear morphisms, which are defined in the usual way. 
	Every object $x\in\K$ gives rise to a \emph{free $A$-module} $F_A(x)=A \T x$ with action given by $\varrho : A \T A \T x\xrightarrow{\mu\T 1}A\T x$.
	We call the functor $F_A:\K\rightarrow \M A$ \emph{the extension-of-scalars}, and write $U_A$ for its forgetful right adjoint:
	$$\begin{tikzcd}[row sep=small]
     \K \arrow[xshift=-1.2ex, swap]{dd}{F_A} \\ \dashv  \\ \M A\arrow[xshift=1.2ex, swap]{uu}{U_A} .
	 \end{tikzcd}$$

	A ring $A$ in $\K$ is \emph{separable} if the multiplication map $\mu$ has an $A,A$-bilinear section~$\sigma:A\rightarrow A\T A$. That is,
	$\mu\sigma=1_A$ and the diagram
	$$\begin{tikzcd}[column sep=large]
	& A\T A \arrow[swap]{ld}{\sigma\T 1}
	\arrow{rd}{1\T\sigma}
	\arrow{d}{\mu}& \\
	A\T A\T A \arrow[swap]{rd}{1\T \mu}
	& A \arrow{d}{\sigma}
	& A\T A\T A
	\arrow{ld}{\mu\T 1}\\
	& A\T A &\end{tikzcd}$$
	commutes.
	
	If $A$ and $B$ are rings in~$\K$ and $h:A~\rightarrow~B$ is a ring morphism, we say that $B$ is an \emph{$A$-algebra}. 
	We can equip $B$ with the usual $A$-module structure via $h$, and write $\overline{B}$ for the corresponding object in $\M A$.
\end{definition}

\begin{remark}
	The module category $\M A$ is idempotent-complete whenever $\K$ is idempotent-complete. %If $A$ is moreover separable, any $A$-module $x$ is a direct summand of the free module $F_A(U_A(x))$. That is, $\M A$ is equivalent to the idempotent completion of the \emph{Kleisli category} $A-\Free_{\K}$ of free $A$-modules, see~\cite{kleisli}. 
\end{remark}

\begin{example}\label{ex:etale}
	Let $R$ be a commutative ring and consider the category $R-\!\!\operatorname{mod}$ of finitely generated $R$-modules. Let $A$ be a commutative projective separable $R$-algebra. By~\cite[Prop.2.2.1]{demeyeringraham}, $A$ is finitely generated as an $R$-module, so $A$ defines a separable ring object in $R-\!\!\operatorname{mod}$.
	On the other hand, we can think of $A=A[0]$ as a separable ring object in $\D(R)$, the homotopy category of bounded complexes of finitely generated projective $R$-modules.  Note that the category of $A$-modules in~$\D(R)$ is equivalent to $\D(A)$ by~\cite[Th.6.5]{bseparability}. 
\end{example}

\begin{notation}
	Let $A$ and $B$ be rings in~$\K$. The ring structure on $A\T B$ is given by $(\mu_A\T\mu_B)(23):(A\T B)^{\T 2}\rightarrow(A\T B)$.  We write $A^e$ for the enveloping ring $A\T A^{\text{op}}$, so that left $A^e$-modules are just $A,A$-bimodules. 
	We write $A\times B$ for the ring $A\oplus B$ with component-wise multiplication.
\end{notation}

\begin{remark}
	If $A$ and $B$ are separable rings in~$\K$, then so are $A^e$, $A\T B$ and $A\times B$. Conversely, $A$ and $B$ are separable whenever $A\times B$ is separable.
\end{remark}

\begin{remark}\label{rem:endo}		
	Let $A$ be a ring in $\K$. Note that every (left) $A$-linear endomorphism $A\rightarrow A$ is in fact~$A^e$-linear, by commutativity of $A$. What is more, any two $A$-linear endomorphisms $A\rightarrow A$ commute.
	
\end{remark}

\begin{definition}\label{def:indecomposable}
	We call a nonzero ring $A$ in $\K$ \emph{indecomposable} if the only idempotent $A$-linear endomorphisms $A\rightarrow A$ in $\K$ are the identity $1_A$ and $0$. In other words, $A$ is indecomposable if it doesn't decompose as a direct sum of nonzero $A^{e}$-modules. By the following lemma, this is equivalent to saying $A$ doesn't decompose as a product of nonzero rings.
\end{definition}

\begin{lemma}\label{lem:AAtoring}\emph{(\cite[Lem.2.2]{bdegree}).}
	Let $A$ be a ring in~$\K$. Suppose there is an $A^e$-linear isomorphism $h:A\xrightarrow{\sim} B\oplus C$ for some $A^e$-modules
	$B, C$ in $\K$. Then $B$ and $C$ admit unique ring structures under which $h$ becomes a ring isomorphism $h:A\xrightarrow{\sim}~B~\times~C$.
\end{lemma}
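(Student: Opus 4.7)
The plan is to transport the ring structure of $A$ across the idempotent decomposition supplied by $h$, and to read off uniqueness from the requirement that $h$ be a ring morphism to $B\times C$. Write $\iota_B\colon B\rightarrow A$, $\iota_C\colon C\rightarrow A$ and $\pi_B\colon A\rightarrow B$, $\pi_C\colon A\rightarrow C$ for the $A^e$-linear inclusions and projections built from $h$, so that $\pi_X\iota_Y=\delta_{X,Y}$ and $\iota_B\pi_B+\iota_C\pi_C=1_A$. Set $p_B=\iota_B\pi_B$ and $p_C=\iota_C\pi_C$; these are orthogonal $A^e$-linear idempotents on $A$ summing to $1_A$.

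The key computation is that $A^e$-linearity of $p_B$ (and commutativity of $A$, cf.~Remark~\ref{rem:endo}) gives $p_B\mu_A=\mu_A(1\T p_B)=\mu_A(p_B\T 1)$, and consequently
$$p_B\mu_A=\mu_A(p_B\T p_B),$$
together with the analogous identity for $p_C$ and the vanishing of the mixed terms $\mu_A(p_B\T p_C)=p_Bp_C\mu_A=0$. With these in hand, I would define
$$\eta_B:=\pi_B\eta_A,\qquad \mu_B:=\pi_B\mu_A(\iota_B\T \iota_B),$$
and similarly $\eta_C,\mu_C$. Associativity, commutativity, and the unit axiom for $(B,\mu_B,\eta_B)$ then follow by a short diagram chase: one inserts $\iota_B\pi_B$ factors where needed and uses that the corresponding identities hold for $A$.

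To confirm that $h$ is a ring isomorphism onto $B\times C$, I would check that each component $\pi_X$ is a ring morphism: the identity $\pi_B\mu_A=\mu_B(\pi_B\T \pi_B)$ is immediate from $p_B\mu_A=\mu_A(p_B\T p_B)$, and $\pi_B\eta_A=\eta_B$ holds by definition. Combined with the vanishing of the mixed terms, this yields $h\mu_A=\mu_{B\times C}(h\T h)$ and $h\eta_A=\eta_{B\times C}$. Uniqueness is forced: if some ring structure on $B$ makes $\pi_B$ a ring morphism, then necessarily $\eta_B=\pi_B\eta_A$, and post-composing $\pi_B\mu_A=\mu_B(\pi_B\T\pi_B)$ with $\iota_B\T\iota_B$ forces $\mu_B=\pi_B\mu_A(\iota_B\T\iota_B)$; similarly for $C$.

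The only conceptual step is the first one, namely extracting from $A^e$-linearity of $p_B$ the compatibility $p_B\mu_A=\mu_A(p_B\T p_B)$; everything else is formal bookkeeping. I do not anticipate a serious obstacle, since the paper has already established (Remark~\ref{rem:endo}) the double-sidedness of $A$-linear endomorphisms of $A$, which is precisely what makes the argument go through.
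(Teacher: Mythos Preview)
The paper does not supply its own proof of this lemma; it is quoted verbatim from \cite[Lem.~2.2]{bdegree} and used as a black box. Your argument is correct and is essentially the standard one: transport the idempotents $p_B,p_C$ through $\mu_A$ using bilinearity, then push the ring structure of $A$ along the projections. One small remark: your appeal to Remark~\ref{rem:endo} is slightly off---that remark upgrades one-sided linearity to two-sided, whereas here you already have $A^e$-linearity of $p_B$ by hypothesis, and the identities $p_B\mu_A=\mu_A(1\T p_B)=\mu_A(p_B\T 1)$ follow directly from the definition of the bimodule structure on $A$. This is cosmetic; the mathematics is sound.
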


Let $(A, \mu, \eta)$ be a separable ring in~$\K$ with separability morphism $\sigma$. In what follows, we define a tensor structure $\T_A$ on $\M A$ under which extension-of-scalars becomes monoidal. The following results all appear in~\cite[\S 1]{bdegree}. For detailed proofs, see~\cite[\S1.1]{bregjethesis}.
Let $(x, \varrho_1)$ and $(y, \varrho_2)$ be $A$-modules. Here, we can write $\varrho_2$ to indicate both a left and right action of $A$ on $y$, as $A$ is commutative. 
Seeing how the endomorphism
$$
v:\begin{tikzcd}
x\T y 
\arrow{r}{1\T \eta \T 1}
& x \T A \T y
\arrow{r}{1\T \sigma \T 1}
& x \T A \T A\T y 
\arrow{r}{\varrho_1\T \varrho_2} 
& x\T y
\end{tikzcd}$$
is idempotent and
$\K$ is idempotent-complete,
we can define $x\T_A y$ as the direct summand $\im(v)$ of $x\T y$.
We get a split coequaliser in~$\K$,
$$
\begin{tikzcd}[column sep=large]
x\T A \T y 
\arrow[yshift=0.5ex]{r}{\varrho_1 \T 1}
\arrow[yshift=-0.5ex]{r}[swap]{1 \T \varrho_2}
&x \T y
\arrow[two heads]{r}
&  x\T_A y,
\end{tikzcd} $$
and $A$ acts on $x\T_A y$ by
$$ \begin{tikzcd}
A\T x\T_A y 
\arrow[hook]{r}
& A\T x\T y
\arrow{r}{\varrho_1 \T 1 }
& x\T y
\arrow[two heads]{r}
& x\T_A y. 
\end{tikzcd}$$

\begin{proposition}
	The tensor product $\T_A$ yields a symmetric monoidal structure on $\M A$ under which $F_A$ becomes monoidal. We will write $\1_A=A$ for the unit object in $\M A$.
\end{proposition}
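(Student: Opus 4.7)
The plan is to lift the symmetric monoidal structure of $(\K,\T,\1)$ to $(\M A,\T_A,A)$ by exploiting the fact that $\T_A$ is realised as the image of an idempotent $v$ in $\K$. Because $v$ is built from the module actions together with the separability section $\sigma$, essentially every structural datum on $\M A$ will be obtained by restricting the corresponding datum on $\K$ along the split idempotents~$v$.

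First I would record functoriality and naturality: for $A$-linear $f:x\to x'$ and $g:y\to y'$, $A$-linearity forces $f\T g$ to commute with $v$, so it restricts to an $A$-linear morphism $f\T_A g$. The projection $p_{x,y}:x\T y\twoheadrightarrow x\T_A y$ and its canonical section $i_{x,y}:x\T_A y\hookrightarrow x\T y$ are natural in both $A$-module arguments, allowing us to pass freely between $\T$ and $\T_A$.

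Next I would construct the three structural isomorphisms. The unit isomorphism $A\T_A x\xrightarrow{\sim} x$ is induced by the action $\varrho:A\T x\to x$, which coequalises the parallel pair $\mu\T 1,\ 1\T\varrho:A\T A\T x\rightrightarrows A\T x$ by the module axioms; its inverse is induced by $\eta\T 1:x\to A\T x$. The associator $(x\T_A y)\T_A z\xrightarrow{\sim} x\T_A(y\T_A z)$ is obtained by realising both sides as the image of a \emph{common} idempotent on $x\T y\T z$ (after identifying via the associator of~$\K$), namely any composite of $v_{x,y}\T 1_z$ and $1_x\T v_{y,z}$; bilinearity of $\sigma$ is what guarantees these two idempotents commute and have the same image. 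The symmetry $x\T_A y\xrightarrow{\sim} y\T_A x$ descends from the symmetry $(12):x\T y\to y\T x$ in~$\K$, which conjugates $v_{x,y}$ into $v_{y,x}$ thanks to commutativity of~$A$ and naturality of~$\sigma$. Once these are in place, the pentagon, triangle and hexagon axioms in $\M A$ reduce to their analogues in~$\K$, each being the restriction to the image of an idempotent of a diagram that already commutes in~$\K$.

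Finally, for monoidality of $F_A$, the unit constraint $\1_A=A=F_A(\1)$ is tautological, and the binary constraint $F_A(x)\T_A F_A(y)\xrightarrow{\sim} F_A(x\T y)$ is induced by $\mu$ together with the symmetry $(23)$, merging the two free $A$-factors and checking the result coequalises the defining pair; compatibility with coherence follows from the previous step. The main obstacle is associativity: one must simultaneously identify $(x\T_A y)\T_A z$ and $x\T_A(y\T_A z)$ as the image of a single idempotent on $x\T y\T z$, which is exactly where bilinearity of $\sigma$ is genuinely used to make $v_{x,y}\T 1$ and $1\T v_{y,z}$ commute. Everything else is diagrammatic and inherited from coherence in~$\K$ through naturality of $p$ and~$i$.
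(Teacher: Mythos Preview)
The paper does not supply its own proof of this proposition: immediately before stating it, the text says the results ``all appear in~\cite[\S 1]{bdegree}'' and directs the reader to~\cite[\S1.1]{bregjethesis} for detailed proofs. Your outline is precisely the standard argument one finds in those sources: realise $x\T_A y$ as the image of the idempotent $v$ built from $\sigma$, obtain the unit constraint from the action map $\varrho$, obtain the associator by identifying both iterated tensor products with the image of the composite idempotent $(v_{x,y}\T 1)(1\T v_{y,z})$ on $x\T y\T z$, descend the symmetry of $\K$, and reduce all coherence axioms to those already holding in $\K$ via naturality of the projection/section pair. Your emphasis on bilinearity of $\sigma$ as the genuine content behind associativity is exactly right, and your construction of the monoidal constraint for $F_A$ (merge the two free $A$-factors via $\mu$ and a transposition) is the correct one. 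There is no gap; your sketch matches the approach the paper defers to.
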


\begin{remark}\label{rem:algebracorrespondence}
	%Let $A$ be a separable ring in $\K$ and suppose $B$ is an $A$-algebra. Recall that we write $\overline{B}$ for the corresponding object in~$\M A$, so that $B=U_A(\overline{B})$.
	Let $A$ be a separable ring in $\K$. There is a one-to-one correspondence between $A$-algebras $B$ in $\K$ and rings $\overline{B}$ in $\M A$. Moreover, $B$ is separable if and only if $\overline{B}$ is.
\end{remark}

\begin{remark}\label{rem:F_h}
	Let $A$ be a separable ring in $\K$ and suppose $B$ is an $A$-algebra via $h:A~\rightarrow~B$. For every $A$-module $x$, we let $B$ act on the left factor of $F_h(x):=\overline{B}\T_A x $ as usual. This defines a functor
	$F_h: \M A \longrightarrow \M B$ and the following diagram commutes up to isomorphism:
	$$\begin{tikzcd}[column sep=small]
	&& \K 
	\arrow[swap]{lld}{F_A} 
	\arrow{rrd}{F_B} &&  \\
	\M A  
	\arrow{rrrr}{F_h}
	&&&& \M B.
	\end{tikzcd}$$
	Note also that $F_{gh}\cong F_g F_h$ for any ring morphism $g:B\rightarrow C$.
\end{remark}

\begin{proposition}\label{prop:diagramequivalence}
	Let $A$ be a separable ring in $\K$ and suppose $B$ is a separable $A$-algebra, say $\overline{B}\in \cat L:=\M A$. There is an equivalence $\M B \simeq \Mod {\cat L}{\overline{B}}$
	such that 
	$$
	\begin{tikzcd}
	\K
	\arrow{r}{F_A}
	\arrow{d}{F_B} 
	& \cat L
	\arrow{d}{F_{\overline{B}}}\\
	\M B
	\arrow{r}{\simeq}
	& \Mod {\cat L}{\overline{B}},
	\end{tikzcd}$$
	commutes up to isomorphism.
\end{proposition}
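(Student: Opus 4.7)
The plan is to construct mutually inverse functors between $\M B$ and $\Mod{\cat L}{\overline{B}}$, exploiting the familiar principle that a $B$-module is nothing but an $A$-module equipped with a compatible $B$-action. Separability of both $A$ and $B$ over $\1$ (and thus of $\overline{B}$ over $A$ via Remark~\ref{rem:algebracorrespondence}) ensures that $\T_A$ is defined via split coequalisers, which is the essential input allowing the $B$-action to factor appropriately.

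First, I would define $\Phi: \M B \rightarrow \Mod{\cat L}{\overline{B}}$ as follows. Given a $B$-module $(x,\varrho_B:B\T x\rightarrow x)$, restriction along $h:A\rightarrow B$ endows $x$ with an $A$-action $\varrho_A=\varrho_B(h\T 1)$, so $x\in\cat L$. The $B$-action is $A$-balanced in the sense that $\varrho_B(\mu_B\T 1)(1\T h\T 1) = \varrho_B(1\T \varrho_A)$, and hence descends through the split coequaliser presentation of $\overline{B}\T_A x$ to a morphism $\rho:\overline{B}\T_A x\rightarrow x$ in $\cat L$. A direct check, using the universal property of the coequaliser, shows $\rho$ is $A$-linear and satisfies the unit and associativity axioms for an $\overline{B}$-action in $\cat L$. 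A $B$-linear map is automatically $A$-linear and its $B$-linearity factors through $\overline{B}\T_A(-)$, so $\Phi$ is well-defined on morphisms.

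Second, I would construct an inverse $\Psi:\Mod{\cat L}{\overline{B}}\rightarrow\M B$ by composing the split epimorphism $B\T x\twoheadrightarrow \overline{B}\T_A x$ with the given action $\rho$ to obtain $\varrho_B:B\T x\rightarrow x$ in $\K$. The unit and associativity axioms are inherited from those of $\rho$ together with the fact that the coequaliser $B\T B\T x\rightrightarrows B\T x\twoheadrightarrow \overline{B}\T_A x$ is compatible with the multiplication $\mu_B$. The compositions $\Psi\Phi$ and $\Phi\Psi$ return the original actions on the nose, giving a genuine equivalence (in fact an isomorphism of categories).

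Finally, to check the diagram commutes up to natural isomorphism, I compute both paths on an object $y\in\K$. On one hand, $F_B(y)=B\T y$ with action $\mu_B\T 1$. On the other hand, $F_{\overline{B}}(F_A(y))=\overline{B}\T_A (A\T y)$; using that $A\T y=F_A(y)$ is free and $A\T_A(-)\cong\operatorname{id}$ on $\cat L$, together with the monoidality of $F_A$, this is naturally isomorphic to $\overline{B}\T y=B\T y$ as an $\overline{B}$-module in $\cat L$, with action inherited from $\mu_B\T 1$. Under $\Psi$, this $\overline{B}$-module corresponds precisely to the free $B$-module $F_B(y)$. The main obstacle is bookkeeping: one must carefully trace the split coequaliser idempotent through each construction to verify that the factored action $\rho$ is genuinely $A$-linear, and that the two functors $\Phi$ and $\Psi$ are indeed mutually inverse at the level of actions rather than merely at the level of underlying objects. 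Once the coequaliser property is invoked cleanly, however, all identifications are formal.
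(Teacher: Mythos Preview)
The paper does not actually supply a proof of this proposition: the paragraph preceding the proposition on $\T_A$ states that ``the following results all appear in \cite[\S 1]{bdegree}'' and refers to \cite[\S1.1]{bregjethesis} for detailed proofs, and Proposition~\ref{prop:diagramequivalence} is included among those results without further argument. So there is no in-paper proof to compare against.

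Your argument is the expected one and is correct. The essential point, which you identify, is that a $B$-action $\varrho_B:B\T x\to x$ in $\K$ coequalises the two maps $B\T A\T x\rightrightarrows B\T x$ (right $A$-action on $B$ via $h$ versus restricted left $A$-action on $x$), hence factors uniquely through the split coequaliser $\overline{B}\T_A x$; conversely any $\overline{B}$-action in $\cat L$ precomposes with the split epimorphism $B\T x\twoheadrightarrow \overline{B}\T_A x$ to give a $B$-action in $\K$. These constructions are visibly inverse on actions, not merely on underlying objects, because the split epimorphism is an epimorphism. Your verification that the square commutes via $\overline{B}\T_A(A\T y)\cong \overline{B}\T y$ is also the standard one. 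The only care needed, as you note, is checking $A$-linearity of the factored map $\rho$, and this follows from associativity of $\varrho_B$ together with $\varrho_A=\varrho_B(h\T 1)$.
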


\section{Separable rings}

\begin{proposition}\label{prop:uniquedecomposition}
	Let $A$ be a separable ring in~$\K$.  If $A\cong B\times C$ for rings $B, C$ in $\K$, then any indecomposable ring factor of $A$ is a ring factor of $B$ or $C$.
	In particular, if $A$ can be written as a product of indecomposable $A$-algebras  $A\cong A_1 \times \ldots \times A_n$, this decomposition is unique up to isomorphism.
\end{proposition}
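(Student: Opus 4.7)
My plan is to translate ring factorizations of $A$ into orthogonal decompositions in the commutative ring $E := \End_{A^e}(A)$; commutativity is Remark~\ref{rem:endo}. By Lemma~\ref{lem:AAtoring}, every $A^e$-module splitting $A \cong X \oplus Y$ promotes uniquely to a ring decomposition $A \cong X \times Y$, so ring factors of $A$ correspond bijectively to idempotents $e \in E$, via $X = \im(e)$ and $Y = \im(1-e)$. Since any two such idempotents commute, products of them are again idempotents and correspond to common refinements of the associated ring factorizations.

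For the first assertion, I would write $A \cong B \times C$ via complementary idempotents $e_B + e_C = 1$ and $A \cong D \times D'$ with $D$ indecomposable via $e_D + e_{D'} = 1$. Because $e_B$ and $e_D$ commute, $e_B$ restricts to an $A^e$-linear idempotent on $D = \im(e_D)$, and $e_B|_D + e_C|_D = 1_D$ is an orthogonal decomposition in $\End_{A^e}(D)$. The key subtlety is that the $A^e$-action on $D$ factors through $D^e$ (the complementary factor $D'$ annihilates $D$), so $\End_{A^e}(D) = \End_{D^e}(D)$, and Lemma~\ref{lem:AAtoring} applied to the ring $D$ converts the splitting $D = \im(e_B|_D) \oplus \im(e_C|_D)$ into a ring decomposition. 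Indecomposability of $D$ then forces one summand to vanish; say $e_C e_D = 0$, so that $e_D \leq e_B$. Then $e_D$ restricts to an idempotent in $\End_{B^e}(B)$, and Lemma~\ref{lem:AAtoring} applied to the ring $B$ yields $B \cong D \times \im(1_B - e_D|_B)$, exhibiting $D$ as a ring factor of $B$.

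For the uniqueness statement, suppose $A \cong A_1 \times \ldots \times A_n \cong A'_1 \times \ldots \times A'_m$ with all factors indecomposable. Applying the first assertion iteratively, with $B := A'_j$ and $C := \prod_{k \neq j} A'_k$ for successive $j$, each indecomposable $A_i$ must be a ring factor of some $A'_{\pi(i)}$, and indecomposability of $A'_{\pi(i)}$ forces $A_i \cong A'_{\pi(i)}$. Since distinct $A_i$ correspond to orthogonal idempotents in $E$, $\pi$ is injective, and by the symmetric argument $n = m$ and $\pi$ is a bijection.

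The main delicate point is the one flagged above: applying Lemma~\ref{lem:AAtoring} to a ring factor $R$ of $A$ requires that an $A^e$-module splitting of $R$ really is an $R^e$-module splitting, which reduces to the identification $\End_{A^e}(R) = \End_{R^e}(R)$. This is valid because the complementary ring factor of $R$ acts as zero on $R$. Once this observation is in place, the entire argument becomes a calculation with commuting idempotents in $E$.
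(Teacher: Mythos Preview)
Your approach is correct and differs in presentation from the paper's. The paper decomposes the module category as $\M A \cong \M{A_1} \times \M{A_2}$ along the indecomposable factor $A_1$ (your $D$), projects the $A$-algebras $\overline{B}, \overline{C}$ componentwise, and reads off $\1_{A_1} \cong B_1 \times C_1$ in $\M{A_1}$, whence one of $B_1, C_1$ vanishes by indecomposability of $\1_{A_1}$. You instead stay in $\K$ and work directly with commuting idempotents in $E = \End_{A^e}(A)$, reducing everything to your observation $\End_{A^e}(D) = \End_{D^e}(D)$ so that Definition~\ref{def:indecomposable} applies to the restricted idempotent $e_B|_D$. The underlying calculus is the same---the categorical decomposition $\M A \cong \M{A_1} \times \M{A_2}$ is exactly what your idempotents encode---but your route is more elementary in that it never invokes $\M A$, and you spell out the uniqueness argument (including the equality $e_{A_i} = e_{A'_{\pi(i)}}$ forcing $\pi$ to be injective) that the paper leaves implicit. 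Conversely, the paper's formulation absorbs your ``delicate point'' into the single clean statement that the $A_1$-component of $\M A$ is $\M{A_1}$.
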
 

\begin{proof}
	Suppose $A_1\in\K $ is an indecomposable ring factor of $A$, say $A\cong A_1 \times A_2$ for some ring $A_2$ in~$\K$. The category $\M A$ decomposes as
	$$\M A\cong \M{A_1} \times \M{A_2},$$ 
	with $\1_A$ corresponding to $(\1_{A_1}, \1_{A_2})$.
	Accordingly, the $A$-algebras $\overline{B}$ and $\overline{C}$ correspond to $(B_1, B_2)$ and $(C_1, C_2)$ respectively, with $B_i,C_i$ in $\M{A_i}$ for~$i=1,2$
	, such that $\overline{B}\cong B_1\times B_2$ and $\overline{C}\cong C_1\times C_2$ in~$\M A$.
	Given that $ \1_A\cong \overline{B}\times \overline{C}$, we see $\1_{A_1}\cong B_1 \times C_1$, hence $A_1\cong B_1$ or $A_1\cong C_1$. 
\end{proof}

\begin{lemma}\label{lem:idempotent}
	Let $A$ be a separable ring in~$\K$.
	\begin{enumerate}[label=(\alph*)]
	\item For every ring morphism $\alpha:A\rightarrow \1$, there exists a unique idempotent~$A$-linear morphism $e : A\rightarrow A$ such that $\alpha e=\alpha$ and $e\eta \alpha=e$.

	\item Suppose $\1$ is indecomposable. If $\alpha_i:A\rightarrow \1$ are distinct ring morphisms for $1\leq i\leq n$, with corresponding idempotent morphisms  $e_i: A\rightarrow A$ as above, then $e_i  e_j =\delta_{i,j} e_i$ and $\alpha_i e_j=\delta_{i,j} \alpha_i$.
\end{enumerate}
\end{lemma}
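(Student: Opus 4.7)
The plan is to exploit the dictionary that identifies a left $A$-linear endomorphism $f:A\to A$ with its ``element'' $b_f := f\eta : \1\to A$, recovered by $f = \mu(1_A\otimes b_f)$; under this dictionary, composition corresponds to multiplication of elements in $A$, and any two such endomorphisms commute automatically by Remark~\ref{rem:endo}. For part~(a) the strategy is to produce a canonical idempotent from the separability section $\sigma$ and derive uniqueness by composing any two candidates. For part~(b), the key step is to show that the scalars $c_{ij} := \alpha_i\circ b_j \in \End(\1)$ are idempotent, so that indecomposability of $\1$ forces $c_{ij}\in\{0,1\}$; distinctness of the $\alpha_i$ then rules out $c_{ij}=1$ for $i\ne j$.

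For existence in (a), I set $e := (1_A\otimes\alpha)\sigma$. Left $A$-linearity is immediate from $\sigma\mu = (\mu\otimes 1)(1\otimes\sigma)$. The identity $\alpha e = \alpha$ reduces to $(\alpha\otimes\alpha)\sigma = \alpha\mu\sigma = \alpha$, using that $\alpha$ is a ring map and $\mu\sigma = 1_A$. For $e\eta\alpha = e$, the dual side of bilinearity $\sigma = (1\otimes\mu)(\sigma\eta\otimes 1_A)$ together with $\alpha\mu = \alpha\otimes\alpha$ rewrites $e$ as $(1\otimes\alpha\otimes\alpha)(\sigma\eta\otimes 1_A) = b_e\alpha$, where $b_e := (1\otimes\alpha)\sigma\eta = e\eta$. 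Idempotency is then free: $e\circ e = (b_e\alpha)(b_e\alpha) = b_e(\alpha b_e)\alpha = b_e\alpha = e$, using $\alpha b_e = \alpha e\eta = \alpha\eta = 1_\1$. For uniqueness, any other candidate $e'$ with $b' := e'\eta$ satisfies $e' = b'\alpha$ and $\alpha b' = 1_\1$; hence $e\circ e' = b_e(\alpha b')\alpha = e$ and, symmetrically, $e'\circ e = e'$. Since $e$ and $e'$ commute by Remark~\ref{rem:endo}, we conclude $e = e'$.

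For part (b), set $c_{ij} := \alpha_i\circ b_j \in \End(\1)$ with $b_j = e_j\eta$. From the factoring $e_k = b_k\alpha_k$ one reads off $\alpha_i e_j = c_{ij}\alpha_j$ and $e_i\circ b_j = c_{ij}b_i$. Since also $e_i\circ b_j = \mu(b_j\otimes b_i) = \mu(b_i\otimes b_j) = e_j\circ b_i$ by commutativity of $A$, we obtain $c_{ij}b_i = c_{ji}b_j$. Applying $\alpha_i$ and $\alpha_j$ to this identity (and using $\alpha_i b_i = \alpha_i\eta = 1_\1$) yields $c_{ij} = c_{ji}c_{ij}$ and $c_{ji} = c_{ij}c_{ji}$, hence $c_{ij} = c_{ji}$ and $c_{ij}^2 = c_{ij}$. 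As $\1$ is indecomposable, $\End(\1)$ contains only the trivial idempotents, so $c_{ij}\in\{0,1\}$. If $c_{ij}=1$ for some $i\ne j$ then $b_i = b_j$, so $e_i = e_j$, and then $\alpha_i = \alpha_i e_i = \alpha_i e_j = c_{ij}\alpha_j = \alpha_j$ contradicts distinctness. Hence $c_{ij}=\delta_{ij}$, and the two identities $\alpha_i e_j = c_{ij}\alpha_j = \delta_{ij}\alpha_i$ and $e_i e_j = c_{ij}e_i = \delta_{ij}e_i$ follow at once.

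The one step that is not purely formal is establishing the factoring $e = b_e\alpha$ in part~(a); it requires combining the right $A$-linearity of $\sigma$ with multiplicativity of $\alpha$ in just the right way. Once that factoring is in hand, uniqueness and part~(b) are transparent manipulations in the commutative endomorphism ring $\End(\1)$, driven by its indecomposability.
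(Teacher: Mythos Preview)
Your proof is correct and follows essentially the same strategy as the paper: construct $e$ from the separability section and $\alpha$, verify the two defining identities, deduce uniqueness via $ee'=e$ together with commutativity of $A$-linear endomorphisms (Remark~\ref{rem:endo}), and for part~(b) reduce to the idempotent scalar $\alpha_i e_j\eta\in\End(\1)$. Your systematic use of the dictionary $f\leftrightarrow b_f=f\eta$ and the factoring $e=b_e\alpha$ is a clean repackaging of the paper's diagram chases; your route to $c_{ij}^2=c_{ij}$ via the symmetry $c_{ij}=c_{ji}$ (coming from commutativity of $A$) differs slightly from the paper's direct use of $\alpha_i e_j\eta\alpha_i=\alpha_i e_j$, but the substance is the same.
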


\begin{proof}
	Let $\sigma$ be a separability morphism for $A$. To show (a), consider the $A$-linear map $e:=(\alpha\T 1)\sigma:A\rightarrow A$. 
	Clearly, $\alpha e=\alpha (\alpha\T 1)\sigma=\alpha\mu\sigma=\alpha$, and the diagram
	$$\begin{tikzcd}[column sep=large]
	A
	\arrow{r}{\sigma}
	\arrow[-, double equal sign distance]{dd}
	&A\T A
	\arrow{r}{\alpha\T 1}
	\arrow{d}{1\T \sigma}
	& A 
	\arrow{d}{\sigma}\\
	&A\T A\T A
	\arrow{r}{\alpha\T 1\T 1}
	\arrow{d}{\mu\T 1}
	&A\T A
	\arrow{d}{\alpha\T 1}\\
	A
	\arrow{r}{\sigma}
	&A\T A 
	\arrow{r}{\alpha\T 1}
	& A
	\end{tikzcd}$$
	shows $e$ is idempotent.
	Since
	$$\begin{tikzcd}[column sep=large]
	A
	\arrow{r}{\alpha}
	\arrow{d}{1\T\eta}
	&\1
	\arrow{r}{\eta}
	&  A
	\arrow{d}{\sigma}
	\\
	A\T A
	\arrow{r}{1\T\sigma}
	\arrow{d}{\mu}
	& A \T A\T A
	\arrow{r}{\alpha\T 1\T  1}
	\arrow{d}{\mu\T 1}
	&A\T A
	\arrow{d}{\alpha\T 1}\\
	A
	\arrow{r}{\sigma}
	&A\T A
	\arrow{r}{\alpha\T 1}
	&\1\T A
	\end{tikzcd}$$
	commutes, we also get $e\eta \alpha=e$. 
	Suppose $e'$ is also an $A$-linear morphism with  	$\alpha e'=\alpha$ and $e'\eta \alpha=e'$. 
	Then,
	$e=e\eta\alpha=e\eta\alpha e'=ee'=e'e=e'\eta\alpha e=e'\eta\alpha=e'$ by~Remark~\ref{rem:endo}.
	For (b) let $1\leq i,j \leq n$. From the commuting diagram
	 $$\begin{tikzcd}[column sep=large]
	 A
	 \arrow{r}{\alpha_i}
	 \arrow{d}{1\T \eta}
	 & \1
	 \arrow{r}{\eta}
	 & A
	 \arrow{d}{e_j}\\
	 A\T A
	 \arrow{r}{1\T e_j}
	 \arrow{d}{\mu}
	 & A \T A
	 \arrow{d}{\mu}
	 \arrow{r}{\alpha_i\T 1}
	 & A
	 \arrow{d}{\alpha_i}\\
	 A\arrow{r}{e_j}
	 & A \arrow{r}{\alpha_i} 
	 & \1,
	 \end{tikzcd}$$
	we see that $\alpha_i e_j \eta \alpha_i=\alpha_i e_j$.
	Hence, $(\alpha_i e_j \eta) (\alpha_i e_j \eta)
	 =\alpha_i e_j e_j \eta=\alpha_i e_j \eta$,
	so the morphism $\alpha_i e_j \eta:\1\rightarrow \1$ is idempotent 
	and equals $0$ or $1_{\1}$.
	In the first case, $\alpha_i e_j =\alpha_i e_j \eta \alpha_j =0$ and $e_i e_j=e_i \eta\alpha_i e_j=0$, in particular $i\neq j$.
	On the other hand, if $\alpha_i e_j \eta=1_{\1}$ we get
	$\alpha_i e_j=\alpha_i e_j \eta \alpha_i=\alpha_i$ and $\alpha_i e_j=\alpha_i e_j \eta \alpha_j=\alpha_j$, so~$i=j$.
	\end{proof}

%\begin{remark}
%The above idempotent morphism $e:A\rightarrow A$ is not given by $A\xrightarrow{\alpha} \1 \xrightarrow{\eta}~A$. 
%Both morphisms reveal $\1$ is a direct summand of $A$ in~$\K$, but only $e$ gives~$\1$ as a direct summand of $A$ in $\M {A^e}$.
%\end{remark}

%Recall that if $A$ is a separable ring and B is an $A$-algebra in~$\K$, we write $\overline{B}$ for the corresponding ring object in~$\M A$ (see~Remark~\ref{rem:algebracorrespondence}). 

\begin{lemma}\label{lem:ringtoBB}
	Let $(A,\mu_A,\eta_A)$ and $(B,\mu_B,\eta_B)$ be separable rings in~$\K$.
	\begin{enumerate}[label=(\alph*)] 
		\item Suppose $f:A\rightarrow B$ and $g:B\rightarrow A$ are ring morphisms such that $g f=1_A$. We equip $A$ with the structure of $B^e$-module via the morphism $g$. There exists a $B^e$-linear morphism $\widetilde{f}:A\rightarrow B$ such that $g \widetilde{f}=1_A$.
		In particular, $A$ is a direct summand of $B$ as a $B^e$-module.
		
		\item Suppose $A$ is indecomposable. Let $g_i:B\rightarrow A$ be distinct ring morphisms for $1\leq i \leq n$ and suppose $f:A\rightarrow B$ is a ring morphism with $g_i f=1_A$. Then $A^{\oplus n}$ is a direct summand of $B$ as a $B^e$-module, with projections $g_i: B\rightarrow A$ for~$1\leq i \leq n$.
		
	\end{enumerate}
\end{lemma}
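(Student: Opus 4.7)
The plan is to derive both parts by applying Lemma~\ref{lem:idempotent} internally in the Eilenberg-Moore category $\M A$. The setup: since $B$ is an $A$-algebra via $f$, it corresponds to a ring $\overline B$ in $\M A$, which is separable by Remark~\ref{rem:algebracorrespondence}; each ring morphism $g_i:B\to A$ satisfying $g_i f = 1_A$ is $A$-linear via $f$, hence corresponds to a ring morphism $\overline{g_i}:\overline B \to \1_A$ in $\M A$.

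For part (a), I would apply Lemma~\ref{lem:idempotent}(a) in $\M A$ to $\overline g$, producing an idempotent $\overline B$-linear morphism $\overline e:\overline B \to \overline B$ in $\M A$ with $\overline g\,\overline e = \overline g$ and $\overline e\,\eta_{\overline B}\,\overline g = \overline e$. Translating back to $\K$: the unit $\eta_{\overline B}$ corresponds to $f$, and an $\overline B$-linear (equivalently $B\T_A B$-linear) endomorphism of $B$ is automatically $B^e$-linear because $B$ is commutative. This yields a $B^e$-linear idempotent $e:B \to B$ with $ge = g$ and $efg = e$. Setting $\widetilde f := ef$ then gives $g\widetilde f = gef = gf = 1_A$; the $B^e$-linearity of $\widetilde f$ follows from that of $e$ once one observes that $efg = e$ forces $e(fg(b) - b) = 0$, so $(fg(b)-b)\cdot \widetilde f(a) = 0$ by commutativity of $B$.

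For part (b), I would first note that $A$ indecomposable in $\K$ implies $\1_A$ is indecomposable in $\M A$, since idempotent $A$-linear endomorphisms of $A$ in $\M A$ are exactly multiplications by idempotent elements of $A$. Applying Lemma~\ref{lem:idempotent}(b) in $\M A$ to the distinct ring morphisms $\overline{g_i}:\overline B \to \1_A$ then produces pairwise orthogonal $\overline B$-linear idempotents $\overline e_i:\overline B \to \overline B$ with $\overline{g_i}\,\overline e_j = \delta_{ij}\overline{g_i}$. Translating back, one obtains orthogonal $B^e$-linear idempotents $e_i:B\to B$, each with $\im(e_i) \cong A$ as a $B^e$-module via $g_i$ (with inverse $\widetilde f_i := e_i f$ provided by part (a) applied to $g_i$). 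Idempotent-completeness of $\M{B^e}$ then gives a decomposition $B \cong A^{\oplus n} \oplus C$ of $B^e$-modules, with the $g_i$ as the claimed projections.

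The hard part will be carefully negotiating the translation between $\M A$ and $\K$: one must verify that $\overline B$-linearity of a morphism in $\M A$ corresponds to $B\T_A B$-linearity in $\K$, and that this agrees with $B^e$-linearity on endomorphisms of $B$ itself. The latter is where commutativity of $B$ is essential, since any left $B$-linear endomorphism of $B$ is multiplication by its value at $1_B$ and hence also right $B$-linear. Once this dictionary is in place, both parts reduce cleanly to Lemma~\ref{lem:idempotent}.
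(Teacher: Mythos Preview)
Your approach is essentially the paper's own: pass to $\M A$, apply Lemma~\ref{lem:idempotent} to the ring morphisms $\overline{g_i}:\overline{B}\to\1_A$, obtain the idempotent(s) $\overline e$ (resp.\ orthogonal $\overline e_i$), and push back to $\K$ to get $B^e$-linear idempotents and $\widetilde f := ef$. The paper does exactly this, invoking Remark~\ref{rem:endo} (in $\M A$) to promote $\overline B$-linearity of $\overline e$ to $\overline B^e$-linearity, and then observing that forgetting the $A$-action turns $\overline B^e$-linearity into $B^e$-linearity in $\K$.

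One point to tighten: your justification that $\widetilde f$ is $B^e$-linear is phrased in terms of elements (``$e(fg(b)-b)=0$, so $(fg(b)-b)\cdot\widetilde f(a)=0$''). In an abstract symmetric monoidal category there are no elements, so this should be recast diagrammatically. The paper does this explicitly: left $B$-linearity of $\widetilde f$ (with $B$ acting on $A$ via $g$) follows from the chain $f\mu_A(g\T 1)=\mu_B(fg\T f)$, then $e\mu_B=\mu_B(e\T 1)=\mu_B(1\T e)$ by $B^e$-linearity of $e$, and finally $efg=e$. Your element sketch encodes the same identities and can be made rigorous via generalized elements, but as written it is informal. With that caveat, the argument is correct and matches the paper.
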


\begin{proof}
	Considering the $A$-module structure on $B$ given by $f$, we note that $g:~B\rightarrow~A$ is $A$-linear:
	$$\begin{tikzcd}
	A\T B
	\arrow{r}{f\T 1}
	\arrow[-, double equal sign distance]{d}
	& B\T B
	\arrow{r}{\mu_B}
	\arrow{d}{g\T g}
	& B
	\arrow{d}{g}
	\\
	A\T B
	\arrow{r}{1\T g}
	& A \T A
	\arrow{r}{\mu_A}
	& A.
	\end{tikzcd}$$
	We can thus apply~Lemma~\ref{lem:idempotent} to the ring morphism $\bar{g}:\overline{B}\rightarrow \1_A$ in $\M A$ and find an idempotent $\overline{B}^e$-linear morphism $\bar{e} :\overline{B}\rightarrow \overline{B}$ such that $\bar{g}\bar{e}=\bar{g}$ and $\bar{e}\eta_{\bar{B}}\bar{g}=\bar{e}$.  
	Forgetting the $A$-action, $U_A(\bar{e}):=e:B\rightarrow B$ is idempotent and $B^e$-linear, with $ge=g$ and $efg=e$. Let $\widetilde{f}:=ef$.
	We need to show that $\widetilde{f}$ is $B^e$-linear, where $B^e$ acts on $A$ via $g$.
	Left $B$-linearity of $\widetilde{f}$ follows from the commuting diagram
	$$\begin{tikzcd}
	B\T A
	\arrow{r}{g\T 1}
	\arrow{dd}{1\T f}
	& A\T A
	\arrow{r}{\mu_A}
	\arrow{d}{f\T f}
	& A
	\arrow{d}{f}
	\\
	& B\T B
	\arrow{r}{\mu_B}
	\arrow{d}{e\T 1}
	& B
	\arrow{d}{e}
	\\
	B\T B 
	\arrow{r}{e\T 1}
	\arrow[-, double equal sign distance]{d}
	& B\T B
	\arrow{r}{\mu_B}
	& B
	\arrow[-, double equal sign distance]{d}\\
	B\T B 
	\arrow{r}{1\T e}
	& B\T B
	\arrow{r}{\mu_B}
	& B,\\
	\end{tikzcd}$$
	and right $B$-linearity follows similarly. 
	Finally, $g \widetilde{f}=gef=gf=1_A$.
	
	For (b), let $g_i:B\rightarrow A$ be distinct ring morphisms with $g_i f=1_A$ for~$1\leq i \leq n$. As in part (a), we find idempotent $B^e$-linear morphisms $e_i:B\rightarrow B$ and $B^e$-linear morphisms $\widetilde{f_i}:=e_i f$ with $g_i \widetilde{f_i}=1_A$ and $e_i=\widetilde{f_i} g_i$. In fact, Lemma~\ref{lem:idempotent}(b) shows the $e_i$ are orthogonal. Seeing how $A=\im(e_i)$, we conclude $A^{\oplus n}$ is a direct summand of $B$ as a $B^e$-module, with projections $g_i: B\rightarrow A$ for~$1\leq i \leq n$.
\end{proof}

\begin{corollary}\label{cor:BringfactorF_A(B)}
	Let $A$ and $B$ be separable rings in~$\K$ and suppose $B$ is an $A$-algebra. The corresponding ring $\overline{B}$ in $\M A$ is a ring factor of $F_A(B)$.
\end{corollary}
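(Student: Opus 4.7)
My plan is to apply Lemma~\ref{lem:ringtoBB}(a) in the base category $\K$ to the pair of separable rings $(B, A\T B)$, and then transport the resulting ring decomposition back to $\M A$ via the correspondence of Remark~\ref{rem:algebracorrespondence}. I first note that $A\T B$ is separable in $\K$, being a tensor product of separable rings, and that it carries the natural $A$-algebra structure $A \xrightarrow{1\T \eta_B} A\T B$, $a\mapsto a\T 1_B$. Under Remark~\ref{rem:algebracorrespondence}, this $A$-algebra corresponds to the ring $F_A(B)$ in $\M A$, so the decomposition I seek in $\M A$ will follow from a suitable $A$-algebra decomposition of $A\T B$ in $\K$.

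The key step is to introduce the two ring morphisms
\[
	f := \eta_A \T 1_B : B \longrightarrow A\T B,
	\qquad
	g := \mu_B \circ (h\T 1_B) : A\T B \longrightarrow B
\]
in $\K$, check that each is a ring morphism (commutativity of $B$ is needed for $g$), and verify $gf = 1_B$. Lemma~\ref{lem:ringtoBB}(a) then gives that $B$ is a direct summand of $A\T B$ as an $(A\T B)^e$-module in $\K$, and Lemma~\ref{lem:AAtoring} promotes this to a ring isomorphism $A\T B\cong B\times C$ in $\K$ for some (necessarily separable) ring $C$.

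The final step, requiring the most care, is to lift this ring decomposition to an $A$-algebra decomposition. Composing the $A$-algebra unit $A\rightarrow A\T B$ with the isomorphism $A\T B \cong B\times C$ gives a ring morphism $A\rightarrow B\times C$ whose first component equals $h$ (because $g(a\T 1_B)=h(a)$); the second component is then a ring morphism $h':A\rightarrow C$ that equips $C$ with an $A$-algebra structure under which $A\T B\cong B\times C$ becomes an isomorphism of $A$-algebras in $\K$. Applying Remark~\ref{rem:algebracorrespondence} in the reverse direction yields the desired ring isomorphism $F_A(B)\cong \overline{B}\times \overline{C}$ in $\M A$, exhibiting $\overline{B}$ as a ring factor of $F_A(B)$.
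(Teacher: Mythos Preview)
Your proposal is correct and follows essentially the same approach as the paper: both apply Lemma~\ref{lem:ringtoBB}(a) to the same pair of ring morphisms $f=\eta_A\T 1_B$ and $g=\mu_B(h\T 1_B)$ (which the paper calls ``the action of $A$ on $B$''), then invoke Lemma~\ref{lem:AAtoring} to upgrade the $(A\T B)^e$-module splitting to a ring factorization. The only cosmetic difference is that the paper first passes to $\M A$ (observing that the $(A\T B)^e$-linear splitting is in particular $A$-linear, hence yields an $F_A(B)^e$-module splitting of $F_A(B)$) and then applies Lemma~\ref{lem:AAtoring} there, whereas you apply Lemma~\ref{lem:AAtoring} in $\K$ and afterwards transport the resulting $A$-algebra decomposition to $\M A$ via Remark~\ref{rem:algebracorrespondence}.
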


\begin{proof}
	Applying~Lemma~\ref{lem:ringtoBB} to the ring morphisms $f: B\xrightarrow{\eta_A\T 1_B} A\T B$ and $g$ given by the action of $A$ on $B$, we see that $B$ is a direct summand of $A\T B$ as $(A\T B)^e$-modules in $\K$. In particular, $\overline{B}$ is a direct summand of $F_A(B)$ as $F_A(B)^e$-modules in $\M A$. By~Lemma~\ref{lem:AAtoring}, $\overline{B}$ admits a ring structure under which $\overline{B}$ becomes a ring factor of $F_A(B)$. This new ring structure on $\overline{B}$ is the original one, seeing how the projection $g:F_B(A)\rightarrow \overline{B}$ is a ring morphism for both structures. 
\end{proof}

\section{Degree of a Separable Ring}

We recall Balmer's definition of the degree of a separable ring in a tensor-triangulated category, see~\cite{bdegree}, and show the definition works for any idempotent-complete symmetric monoidal category~$\K$.

\begin{theorem}\label{th:splittingtheorem}
	Let $A$ and $B$ be separable rings in~$\K$. 
	Suppose $f:A\rightarrow B$ and $g:B\rightarrow A$ are ring morphisms such that $g f=1_A$. There exists a separable ring $C$ in $\K$ and a ring isomorphism $h:B\xrightarrow{\sim} A\times C$ such that $\pr_1 h=g$.
	If we equip $C$ with the $A$-algebra structure coming from $\pr_2 h f$, it is unique up to isomorphism of~$A$-algebras.
\end{theorem}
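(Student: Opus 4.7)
My plan is to work throughout in the Eilenberg-Moore category $\M A$. The ring morphism $f:A\rightarrow B$ realises $B$ as an $A$-algebra corresponding, via Remark~\ref{rem:algebracorrespondence}, to a ring $\overline{B}$ in $\M A$ with unit $\eta_{\overline{B}}=f$; the morphism $g$ becomes a ring morphism $\overline{g}:\overline{B}\rightarrow \1_A$ in $\M A$.

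For existence, I apply Lemma~\ref{lem:idempotent}(a) \emph{inside} $\M A$ to the ring morphism $\overline{g}$. This yields a unique $\overline{B}$-linear idempotent $\overline{e}:\overline{B}\rightarrow \overline{B}$ with $\overline{g}\,\overline{e}=\overline{g}$ and $\overline{e}\,\eta_{\overline{B}}\,\overline{g}=\overline{e}$. Since $\M A$ is idempotent-complete, $\overline{e}$ splits, producing $\overline{B}\cong \im(\overline{e})\oplus \overline{C}$ as $\overline{B}^e$-modules in $\M A$, where $\overline{C}:=\im(1-\overline{e})$. Lemma~\ref{lem:AAtoring}, applied in $\M A$, upgrades this to a ring isomorphism $\overline{B}\cong \im(\overline{e})\times \overline{C}$. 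Because $\overline{e}\,\eta_{\overline{B}}:\1_A\rightarrow \overline{B}$ is a section of $\overline{g}$ whose image is $\im(\overline{e})$, the summand $\im(\overline{e})$ is isomorphic to $\1_A$ as a $\overline{B}^e$-module, and under this identification the first projection is $\overline{g}$. Translating back to $\K$ through Remark~\ref{rem:algebracorrespondence} produces a separable $A$-algebra $C$ and a ring isomorphism $h:B\xrightarrow{\sim} A\times C$ with $\pr_1 h=g$; separability of $C$ follows from separability of $B\cong A\times C$.

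For uniqueness, suppose $h':B\xrightarrow{\sim} A\times C'$ is a second such ring isomorphism with $\pr_1 h'=g$. Passing to $\M A$, it corresponds to a ring decomposition $\overline{B}\cong \1_A\times \overline{C'}$ whose first projection is $\overline{g}$, encoded by a $\overline{B}^e$-linear idempotent $\overline{e}'$ satisfying $\overline{g}\,\overline{e}'=\overline{g}$ and $\overline{e}'\,\eta_{\overline{B}}\,\overline{g}=\overline{e}'$ (the last identity uses $\eta_{\overline{B}}=f$ and $gf=1_A$). By the uniqueness clause of Lemma~\ref{lem:idempotent}(a), $\overline{e}'=\overline{e}$, hence $\overline{C'}\cong \overline{C}$ as $\overline{B}^e$-modules and, by the uniqueness clause of Lemma~\ref{lem:AAtoring}, also as rings in $\M A$. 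Via Remark~\ref{rem:algebracorrespondence}, this delivers an $A$-algebra isomorphism $C\xrightarrow{\sim} C'$.

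The one point requiring a little care is identifying the summand $\im(\overline{e})$ with $\1_A$ as a \emph{ring} in $\M A$---not merely as a module---so that the projection produced by Lemma~\ref{lem:AAtoring} is literally $\overline{g}$. I would settle this by noting that $\overline{g}$ is split epi, its section being $f=\eta_{\overline{B}}$ in $\M A$, so that any two ring structures on $\1_A$ for which $\overline{g}$ is a ring morphism must coincide, forcing the structure produced by Lemma~\ref{lem:AAtoring} to agree with the canonical one on $\1_A$.
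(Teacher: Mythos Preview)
Your argument is correct and follows essentially the same route as the paper: the paper invokes Lemma~\ref{lem:ringtoBB}(a) (whose proof is exactly your application of Lemma~\ref{lem:idempotent}(a) in $\M A$) to obtain the $B^e$-summand $A$ of $B$, then applies Lemma~\ref{lem:AAtoring} and checks the induced ring structure on $A$ agrees with the original one via the section $f$. The only real difference is that the paper defers the uniqueness statement to \cite[Th.~2.4]{bdegree}, whereas you supply a self-contained argument via the uniqueness clause of Lemma~\ref{lem:idempotent}(a); this is a welcome improvement, and your verification that the competing idempotent $\overline{e}'$ satisfies the same characterising identities (using $\overline{g}\,\eta_{\overline{B}}=gf=1_{\1_A}$) is exactly what is needed.
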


\begin{proof}
	This proposition is proved in~\cite[Th.2.4]{bdegree} when $\K$ is a tensor-triangulated category.
	In our case, Lemma~\ref{lem:ringtoBB} yields an isomorphism $h:B\xrightarrow{\sim} A\oplus C$ of $B^e$-modules with $\pr_1 h=g$. By~Lemma~\ref{lem:AAtoring}, $A$ and $C$ admit ring structures under which $h$ becomes a ring isomorphism. This new ring structure on $A$ is the original one, seeing how $1_A:A\xrightarrow{f}B\xrightarrow{\pr_1 h} A$ is a ring morphism. 
	The rest of the proof is identical to the proof in~\cite[Th.2.4]{bdegree}.
\end{proof}

\begin{definition}(\cite[Def.3.1]{bdegree}).
	Let $(A, \mu, \eta)$ be a separable ring in~$\K$.
	Applying~Theorem~\ref{th:splittingtheorem} to the ring morphisms $f=1_A\T \eta:A\rightarrow A\T A$ and $g=\mu:A\T A\rightarrow A$, we find a separable $A$-algebra $A'$, unique up to isomorphism, and a ring isomorphism $h:A\T A\xrightarrow{\sim} A\times A'$ such that $\pr_1 h=\mu$. 
 
	The \emph{splitting tower} 
	$$\1=A^{[0]}\xrightarrow{\eta} A=A^{[1]}\rightarrow A^{[2]}
	\rightarrow \ldots \rightarrow A^{[n]}\rightarrow A^{[n+1]}\rightarrow \ldots$$
	is defined inductively by $A^{[n+1]}=(A^{[n]})'$, where we consider $A^{[n]}$ as a ring in $\M {A^{[n-1]}}$. We say the \emph{degree} of $A$ is $d$, writing $\deg_{\K}(A)=d$,
	if $A^{[d]}\neq 0$ and $A^{[d+1]}=0$. We say $A$ has \emph{infinite degree} if $A^{[d]}\neq 0$ for all $d\geq 0$.
\end{definition}

\begin{remark}
	By construction, we have $(A^{[n]})^{[m+1]}\cong A^{[n+m]}$ as $A^{[n+m-1]}$-algebras for all $m\geq 0$ and $n\geq 1$, where we regard $A^{[n]}$ as a ring in $\M {A^{[n-1]}}$.
	In other words, $\underset{\M {A^{[n-1]}}}\deg \! \!\!(A^{[n]})=\deg_{\K}(A)-n+1$ for $1\leq n\leq \deg_{\K}(A)+1$.
\end{remark}

\begin{example}\label{ex:etale}
	Let $R$ be a commutative ring and suppose $A$ is a commutative projective separable $R$-algebra.  
	If $\Spec R$ is connected, then the degree of $A$ as a ring object in the categories $R-\!\!\operatorname{mod}$ and $\D(R)$ (as in~Example~\ref{ex:etale}) recovers its rank as an $R$-module. 
\end{example}

\begin{proposition}\label{prop:degreeproperties}
	Let $A$ and $B$ be separable rings in~$\K$.
	\begin{enumerate}[label=(\alph*)]
		\item  We have $F_{A^{[n]}}(A)\cong \1_{A^{[n]}}^{\times n}\times A^{[n+1]}$ as $A^{[n]}$-algebras.

		\item Let $F:\K\rightarrow \cat L$ be an additive monoidal functor. For every $n\geq 0$, the rings $F(A^{[n]})$ and $F(A)^{[n]}$ are isomorphic. In particular, $\deg_{\cat L}(F(A))\leq~\deg_{\K} (A)$.

		\item  Suppose $A$ is a $B$-algebra. Then 
		$\deg_{\M B}(F_B(A))= \deg_{\K} (A)$.
	\end{enumerate}
\end{proposition}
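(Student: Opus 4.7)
For (a), the plan is to induct on $n\geq 1$; the base case $n=1$ is the definition $F_A(A)=A\T A\cong A\times A^{[2]}$ of $A^{[2]}$. For the inductive step, let $h_n\colon A^{[n]}\to A^{[n+1]}$ denote the structure morphism of $A^{[n+1]}$ as an $A^{[n]}$-algebra. By Remark~\ref{rem:F_h} we have $F_{A^{[n+1]}}\cong F_{h_n}\circ F_{A^{[n]}}$, so applying the additive monoidal $F_{h_n}$ to the inductive hypothesis yields
\[
F_{A^{[n+1]}}(A)\cong \1_{A^{[n+1]}}^{\times n}\times \bigl(A^{[n+1]}\T_{A^{[n]}}A^{[n+1]}\bigr).
\]
The defining splitting $A^{[n+1]}\T_{A^{[n]}}A^{[n+1]}\cong A^{[n+1]}\times A^{[n+2]}$ of $A^{[n+2]}=(A^{[n+1]})'$ in $\M{A^{[n-1]}}$ (via Theorem~\ref{th:splittingtheorem}) then completes the induction.

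For (b), I would again induct on $n$, with $n\leq 1$ immediate from monoidality. For $n\geq 1$, the additive monoidal $F$ induces an additive monoidal functor $\widetilde F\colon \M{A^{[n-1]}}\to \Mod{\cat L}{F(A^{[n-1]})}$; here I use that $\T_{A^{[n-1]}}$ is defined as an idempotent image, which any additive functor preserves. Applying $\widetilde F$ to the defining splitting $A^{[n]}\T_{A^{[n-1]}}A^{[n]}\cong A^{[n]}\times A^{[n+1]}$ (whose first projection is the multiplication of $A^{[n]}$), and using the inductive identifications $F(A^{[k]})\cong F(A)^{[k]}$ for $k\leq n$ as $F(A^{[k-1]})$-algebras, produces a splitting
\[
F(A)^{[n]}\T_{F(A)^{[n-1]}}F(A)^{[n]}\cong F(A)^{[n]}\times F(A^{[n+1]})
\]
whose first projection is the multiplication of $F(A)^{[n]}$. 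Comparing with the defining splitting of $F(A)^{[n+1]}=(F(A)^{[n]})'$ via the uniqueness in Theorem~\ref{th:splittingtheorem} then gives $F(A^{[n+1]})\cong F(A)^{[n+1]}$. The degree inequality is immediate: $A^{[d+1]}=0$ forces $F(A)^{[d+1]}\cong F(A^{[d+1]})=0$.

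For (c), I would apply (b) to the additive monoidal functor $F_B\colon \K\to \M B$ to obtain $F_B(A)^{[n]}\cong F_B(A^{[n]})=B\T A^{[n]}$ for every $n$, which already yields $\deg_{\M B}(F_B(A))\leq\deg_\K(A)$. For the reverse inequality, I note that $A^{[n]}$ is naturally a $B$-algebra via the composition $B\to A=A^{[1]}\to A^{[2]}\to\cdots\to A^{[n]}$, so Corollary~\ref{cor:BringfactorF_A(B)} exhibits $\overline{A^{[n]}}$ as a ring factor of $B\T A^{[n]}$; in particular $A^{[n]}\neq 0$ forces $B\T A^{[n]}\neq 0$, giving $\deg_\K(A)\leq \deg_{\M B}(F_B(A))$.

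The main obstacle lies in (b): I must carry enough structural compatibility through the induction that the identifications at levels $n-1$ and $n$ together convert the $\widetilde F$-image of the splitting defining $A^{[n+1]}$ into one whose first projection is literally the multiplication of $F(A)^{[n]}$. Without this, the uniqueness clause of Theorem~\ref{th:splittingtheorem} cannot be invoked to match $F(A^{[n+1]})$ with $F(A)^{[n+1]}$. Parts (a) and (c) are then essentially applications of (b) together with the defining splittings and Corollary~\ref{cor:BringfactorF_A(B)}.
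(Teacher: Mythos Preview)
Your proposal is correct and follows the same approach as the paper. The paper simply cites \cite[Th.3.7, 3.9]{bdegree} for (a) and (b), and your inductive arguments are precisely the ones given there; your proof of (c) matches the paper's almost verbatim, using that $A^{[n]}$ is a $B$-algebra and hence a summand of $F_B(A^{[n]})\cong F_B(A)^{[n]}$. One small slip: in the inductive step for (a), the defining splitting of $A^{[n+2]}=(A^{[n+1]})'$ takes place in $\M{A^{[n]}}$, not $\M{A^{[n-1]}}$.
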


\begin{proof}
	The proofs for (a) and (b) in~\cite[Th.3.7, 3.9]{bdegree} still hold in our (not-necessarily triangulated) setting. To prove (c), note that $A^{[n]}$ is a $B$-algebra and hence a direct summand of $F_B(A^{[n]})\cong F_B(A)^{[n]}$. This means $F_B(A)^{[n]}\neq 0$ when $A^{[n]}\neq 0$ so that $\deg_{\M B}(F_B(A))\geq \deg_{\K} (A)$.
\end{proof}

\begin{lemma}\emph{(\cite[Lem.3.11]{bdegree}).}\label{lem:1degree}
	Let $n\geq 1$ and $A:=\1^{\times n}\in\K$. There is an isomorphism $A^{[2]}\cong A^{\times (n-1)}$ of $A$-algebras.
\end{lemma}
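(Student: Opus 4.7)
The plan is to use the uniqueness clause in Theorem~\ref{th:splittingtheorem}: $A^{[2]}$ is characterized as the complementary $A$-algebra in any decomposition $A \T A \cong A \times C$ with $\pr_1 = \mu$. So it suffices to exhibit a ring isomorphism $\phi: A \T A \xrightarrow{\sim} A \times A^{\times(n-1)} = A^{\times n}$ that intertwines the $A$-algebra structures (namely $1_A \T \eta$ on the source and the diagonal embedding on the target) and that satisfies $\pr_1 \phi = \mu$.

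The key computation is to identify $A \T A$ as a ring. Bi-additivity of $\T$ together with $\1 \T \1 \cong \1$ gives $A \T A \cong \bigoplus_{i,j=1}^{n} \1$, and unfolding the tensor-product ring structure $(\mu_A \T \mu_A)(23)$ shows that the $n^2$ idempotents $\iota_i \T \iota_j$ are pairwise orthogonal, so $A \T A \cong \1^{\times n^2}$ as rings. Under this identification, $\mu$ collapses the diagonal components $(i,i)$ onto the $i$-th summand of $A$ and kills the off-diagonal ones, while $1_A \T \eta$ sends $\iota_i$ to $\sum_{j=1}^n (i,j)$, the full $i$-th ``row''. On the target side, $A^{\times n} \cong \1^{\times n^2}$ is indexed instead by pairs $(i,k)$ with $k$ labelling which copy of $A$; here the diagonal $A$-algebra map sends $\iota_i$ to $\sum_{k=1}^n (i,k)$, and $\pr_1$ sends $(i,k) \mapsto \delta_{k,1}\, \iota_i$.

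Both presentations therefore have the same row decomposition with respect to the $A$-action, so for each $i$ I would choose any bijection $\phi_i$ of $\{1,\ldots,n\}$ sending $i \mapsto 1$ (for instance the transposition $(i\;1)$); the induced permutation $\phi$ of the $n^2$ components of $\1^{\times n^2}$ is a ring isomorphism, and by construction it preserves rows (so intertwines the two $A$-algebra structures) and sends each diagonal component to the first column (so satisfies $\pr_1 \phi = \mu$). The step that needs the most care is verifying the componentwise multiplication claim on $\1^{\times n} \T \1^{\times n}$; this is where bi-additivity of $\T$ actually enters, and it is a routine but explicit unfolding of the tensor-product ring structure. Theorem~\ref{th:splittingtheorem} then yields $A^{[2]} \cong A^{\times (n-1)}$ as $A$-algebras, completing the proof.
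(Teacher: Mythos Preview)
Your proposal is correct and follows essentially the same strategy as the paper: both identify $A\T A$ and $A^{\times n}$ with $\1^{\times n^2}$ indexed by pairs, then produce a row-preserving permutation of the $n^2$ copies of $\1$ sending the diagonal to the first column, so that the resulting map is an $A$-algebra isomorphism with first projection~$\mu$; the conclusion then follows from the uniqueness clause in Theorem~\ref{th:splittingtheorem}. The only difference is cosmetic: the paper uses the uniform bijection $(i,j)\mapsto(i,i-j)$ with indices in~$\Z_n$, whereas you use an ad hoc bijection (e.g.\ the transposition $(i\;1)$) for each row; the paper's choice is a little cleaner and is reused later to exhibit $\1^{\times n}$ as quasi-Galois with group~$\Z_n$, but for this lemma either works.
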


\begin{proof}
	We prove there is an $A$-algebra isomorphism $\lambda:A\T A\xrightarrow{\sim} A\times A^{\times (n-1)}$ with $\pr_1 \lambda=\mu_A$.
	We write $A=\prod_{i=0}^{n-1} \1_i$,
	$A\T A=\prod_{0\leq i,j\leq n-1} \1_i\T \1_j$
	and $A^{\times n}= \prod_{k=0}^{n-1} \prod_{i=0}^{n-1} \1_{ik}$ with $\1=\1_i=\1_{ik}$ for all $i,k$.
	Define $\lambda:A\T A\rightarrow A^{\times n}$ by mapping the factor $\1_i\T \1_j$ identically to $\1_{i(i-j)}$, with indices in~$\Z_n$. Then, $\lambda$ is an $A$-algebra isomorphism and $\pr_{k=0}\lambda=\mu_A$.
\end{proof}

\begin{corollary}\label{cor:1degree}
	Let $n\geq 1$. Then $\deg_{\K} (\1^{\times n})=n$ and $(\1^{\times n})^{[n]}\cong\1^{\times n!}$ in~$\K$.
\end{corollary}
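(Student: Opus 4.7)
The plan is to prove by induction on $k$, for $1 \leq k \leq n$, the auxiliary claim
\[
(\star_k) \qquad A^{[k]} \cong \1_{A^{[k-1]}}^{\times(n-k+1)} \text{ as rings in } \M{A^{[k-1]}},
\]
where $A := \1^{\times n}$, with the convention $A^{[0]} = \1$ and $\M{A^{[0]}} = \K$. Both statements of the corollary drop out of this.

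The base case $k = 1$ is the very definition of $A$. For the inductive step, suppose $(\star_k)$ holds for some $1 \leq k \leq n$. Since $\M{A^{[k-1]}}$ is itself an idempotent-complete symmetric monoidal category, I would apply Lemma~\ref{lem:1degree} inside $\M{A^{[k-1]}}$ to the ring $A^{[k]} \cong \1_{A^{[k-1]}}^{\times(n-k+1)}$. This produces an $A^{[k]}$-algebra isomorphism $(A^{[k]})^{[2]} \cong (A^{[k]})^{\times(n-k)}$. By the Remark following the definition of the splitting tower, $(A^{[k]})^{[2]} \cong A^{[k+1]}$ as $A^{[k]}$-algebras, so $A^{[k+1]} \cong (A^{[k]})^{\times(n-k)}$ as $A^{[k]}$-algebras. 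Translating under the one-to-one correspondence between $A^{[k]}$-algebras and rings in $\M{A^{[k]}}$ (Remark~\ref{rem:algebracorrespondence}) yields exactly $(\star_{k+1})$ when $k < n$, and for $k = n$ it yields $A^{[n+1]} \cong \1_{A^{[n]}}^{\times 0} = 0$.

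Setting $k = n$ in $(\star_n)$ gives $A^{[n]} \cong \1_{A^{[n-1]}}$, which is nonzero since $A^{[n-1]} \neq 0$. Combined with $A^{[n+1]} = 0$ from the previous paragraph, this shows $\deg_\K(A) = n$. For the second claim, the ring isomorphism $(\star_k)$ forgets along $U_{A^{[k-1]}}\colon \M{A^{[k-1]}} \to \K$ to a ring isomorphism $A^{[k]} \cong (A^{[k-1]})^{\times(n-k+1)}$ in $\K$, since the forgetful functor preserves finite products. Unfolding this recursion from $k = n$ down to $k = 1$ gives
\[
A^{[n]} \cong (A^{[n-1]})^{\times 1} \cong (A^{[n-2]})^{\times 2!} \cong \ldots \cong (A^{[0]})^{\times n!} = \1^{\times n!},
\]
since the accumulated multiplier is $\prod_{k=1}^{n}(n-k+1) = n!$.

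I do not foresee any substantial obstacle; the only real care needed is bookkeeping of which ambient category each isomorphism lives in, and correctly invoking Remark~\ref{rem:algebracorrespondence} to translate between "$A^{[k]}$-algebra in $\M{A^{[k-1]}}$" and "ring in $\M{A^{[k]}}$" at each step of the induction.
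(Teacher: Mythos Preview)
Your proof is correct and uses the same engine as the paper—Lemma~\ref{lem:1degree} together with the shift relation $(A^{[k]})^{[2]}\cong A^{[k+1]}$—but the induction is organized differently. The paper inducts on $n$: from $A^{[2]}\cong \1_A^{\times(n-1)}$ in $\M A$ it invokes the full corollary for $n-1$ inside $\M A$ to get $\deg_{\M A}(A^{[2]})=n-1$ and $(A^{[2]})^{[n-1]}\cong \1_A^{\times(n-1)!}$, then forgets to $\K$. You instead fix $n$ and climb the tower one level at a time via your auxiliary claim $(\star_k)$. Both arguments are sound; the paper's version is shorter because it packages the whole tower computation into a single inductive appeal, while yours makes the level-by-level structure explicit at the cost of more bookkeeping.
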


\begin{proof}
	Let $A:=\1^{\times n}$. The result is clear when $n=1$, and we proceed by induction on~$n$. By~Lemma~\ref{lem:1degree}, we know $A^{[2]}\cong \1_A^{\times (n-1)}$ in $\M A$. Assuming the induction hypothesis, $\deg_{\M A}(A^{[2]})=n-1$ and $$A^{[n]}\cong(A^{[2]})^{[n-1]}\cong\1_A^{\times (n-1)!}\cong(\1^{\times n})^{\times(n-1)!}\cong\1^{\times n!}.$$
\end{proof}

\begin{lemma}\label{cor:1factor}
	Let $A$ and $B$ be separable rings of finite degree in $\K$. Then,
	\begin{enumerate}[label=(\alph*)]
		\item $\deg (A\times B)\leq \deg(A)+\deg(B)$
		\item $\deg(A\times \1^{\times n})= \deg(A)+n$
		\item $\deg(A^{\times t})=\deg(A) \cdot t$.
	\end{enumerate}
\end{lemma}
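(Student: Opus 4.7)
The plan is to prove~(a) by strong induction on $N:=\deg(A)+\deg(B)$, performed uniformly across all idempotent-complete symmetric monoidal categories so that the same statement is simultaneously available inside $\M A$, $\M B$, and $\M C$. The base case $N\le 1$ is trivial, since then one factor vanishes. For the inductive step with $C:=A\times B$ and $d_A,d_B\ge 1$, I would first decompose
$$C\T C \;\cong\; A^{\T 2}\times(A\T B)\times(B\T A)\times B^{\T 2}$$
as rings in $\K$, and then apply $A^{\T 2}\cong A\times A^{[2]}$ and $B^{\T 2}\cong B\times B^{[2]}$ to read off
$$C^{[2]} \;\cong\; A^{[2]}\times B^{[2]}\times(A\T B)\times(B\T A)$$
as $C$-algebras. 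The key structural step is then to identify this $C^{[2]}$ under the equivalence $\M C\simeq\M A\times\M B$ coming from the idempotents of $C$: a direct computation with $e_A=(1_A,0)\in C$ should show that $C^{[2]}$ corresponds to the pair $\bigl(A^{[2]}\times F_A(B),\ B^{[2]}\times F_B(A)\bigr)$, with the obvious ring structures in $\M A$ and $\M B$ respectively.

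Since the splitting tower in a product category decomposes componentwise, $\deg$ in $\M A\times\M B$ is the maximum of the componentwise degrees, and it suffices to bound each component separately. Using $\deg_{\M A}(A^{[2]})=d_A-1$ from the Remark following the definition of degree, and $\deg_{\M A}(F_A(B))\le d_B$ from Proposition~\ref{prop:degreeproperties}(b), the sum is at most $N-1$, so the inductive hypothesis applied inside $\M A$ yields $\deg_{\M A}(A^{[2]}\times F_A(B))\le N-1$. The $\M B$-side is symmetric. The isomorphism $(C^{[2]})^{[N]}\cong C^{[N+1]}$ from the same Remark then forces $C^{[N+1]}=0$, i.e.\ $\deg_\K(C)\le N$, as required.

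For~(b) and~(c), the upper bounds are immediate from iterating~(a) and Corollary~\ref{cor:1degree}. For the lower bounds I would extend scalars along $F_{A^{[d_A]}}:\K\to\M{A^{[d_A]}}$, with the case $d_A=0$ already covered by Corollary~\ref{cor:1degree}. Proposition~\ref{prop:degreeproperties}(a) gives $F_{A^{[d_A]}}(A)\cong\1^{\times d_A}_{A^{[d_A]}}$, whence
$$F_{A^{[d_A]}}(A\times\1^{\times n})\cong\1^{\times(d_A+n)}_{A^{[d_A]}} \quad\text{and}\quad F_{A^{[d_A]}}(A^{\times t})\cong\1^{\times d_A t}_{A^{[d_A]}}.$$
Corollary~\ref{cor:1degree} computes their degrees as $d_A+n$ and $d_A t$, and Proposition~\ref{prop:degreeproperties}(b) transfers these to the desired lower bounds on $\deg_\K(A\times\1^{\times n})$ and $\deg_\K(A^{\times t})$.

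I expect the main obstacle to be the structural identification in the induction step of~(a): correctly tracking the ring-in-$\M A$ (resp.\ $\M B$) structure on the $A\T B$ (resp.\ $B\T A$) summand of $C^{[2]}$ and verifying it agrees with $F_A(B)$ (resp.\ $F_B(A)$). Once that identification is in place, the degree estimates follow formally from the inductive hypothesis and earlier results of this section.
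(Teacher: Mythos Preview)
Your argument is correct. Parts~(b) and~(c) match the paper almost verbatim: both extend scalars along $F_{A^{[d_A]}}$ and invoke Corollary~\ref{cor:1degree} and Proposition~\ref{prop:degreeproperties}(b) for the lower bound, with the upper bound coming from~(a).

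For part~(a), however, your route is genuinely different. You argue by strong induction on $N=\deg(A)+\deg(B)$, identifying $C^{[2]}$ under the equivalence $\M C\simeq\M A\times\M B$ as the pair $(A^{[2]}\times F_A(B),\,B^{[2]}\times F_B(A))$, and then bounding each component using the inductive hypothesis. The paper instead avoids induction entirely: it sets $n:=\deg(A\times B)$, passes to $C:=(A\times B)^{[n]}$ where $F_C(A)\times F_C(B)\cong\1_C^{\times n}$ by Proposition~\ref{prop:degreeproperties}(a), and then extends scalars twice more---first to $D:=(F_C(A))^{[\deg F_C(A)]}$ and then to $E:=(F_D F_C(B))^{[\deg F_D F_C(B)]}$---so that both factors become powers of $\1$ and Corollary~\ref{cor:1degree} reads off $n=\deg(F_C(A))+\deg(F_D F_C(B))\le\deg(A)+\deg(B)$. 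The paper's argument is shorter and sidesteps the structural bookkeeping you flag as the main obstacle; your argument, on the other hand, yields an explicit description of $C^{[2]}$ in the product decomposition, which is of independent interest even if not needed here.
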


\begin{proof}
	To prove (a), let $n:=\deg(A\times B)$ and $C:=(A\times B)^{[n]}$. 
	Writing $A':=F_{C}(A)$ and $B':= F_{C}(B)$, we know from~Proposition~\ref{prop:degreeproperties}(a) that
	$$A'\times B'\cong\1_{C}^n.$$ 
	If we let $D:=(A')^{[\deg(A')]}$ and
	apply $F_D$ to the isomorphism, we get 
	$$\1_{D}^{\deg(A')}\times F_{D}(B')\cong \1_{D}^n.$$
	Similarly, putting $E:=(F_D(B'))^{[\deg(F_D(B'))]}$ and applying $F_E$ gives
	$$\1_{E}^{\deg(A')}\times \1_{E}^{\deg(F_D(B'))}\cong \1_{E}^n.$$
	This shows $n=\deg(A')+\deg(F_D(B'))\leq \deg(A)+\deg (B)$ by~Proposition~\ref{prop:degreeproperties}(b).
	For (b), let $B:=A^{[\deg(A)]}$. Then,
	$F_B(A\times\1^{\times n})\cong \1_B^{\times \deg(A)}\times \1_B^{\times n}$
	and we find $$\deg(A\times \1^{\times n})\geq\deg(F_B(A\times \1^{\times n}))=\deg (A)+n.$$
	To prove (c), we write $B:=A^{[\deg(A)]}$ again and note that
	$F_B(A^{\times t})\cong \left(\1_{B}^{\times\deg(A)}\right)^{\times t}$.
	Hence, $\deg(A^{\times t})\geq \deg(F_B(A^{\times t}))=\deg(A)\cdot t$.
\end{proof}

\section{Counting Ring Morphisms}
.
\begin{lemma}\label{lem:nrmorphisms}
	Let $A$ be a separable ring in~$\K$ and suppose $\1$ is indecomposable. If there are $n$ distinct ring morphisms $A\rightarrow \1$, then $A$ has $\1^{\times n}$ as a ring factor. In particular, there are at most $\deg A$ distinct ring morphisms $A\rightarrow \1$.
\end{lemma}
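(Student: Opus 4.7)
The plan is to reduce to Lemma~\ref{lem:ringtoBB}(b), with $\1$ playing the role of the indecomposable target and $A$ playing the role of the larger ring. Let $\alpha_1,\ldots,\alpha_n:A\rightarrow\1$ be the $n$ distinct ring morphisms. Since each $\alpha_i$ is a unit-preserving ring morphism, we have $\alpha_i\eta=1_{\1}$, so the single morphism $f:=\eta:\1\rightarrow A$ is a common section for the $\alpha_i$.

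Applying Lemma~\ref{lem:ringtoBB}(b) with the dictionary (lemma's $A$) $=\1$, (lemma's $B$) $=A$, (lemma's $g_i$) $=\alpha_i$, and (lemma's $f$) $=\eta$ — using that $\1$ is indecomposable by hypothesis — yields an $A^e$-linear direct summand decomposition
\[ A\;\cong\;\1^{\oplus n}\oplus C \]
in which the projections onto the copies of $\1$ are precisely the ring morphisms $\alpha_1,\ldots,\alpha_n$. Next I would upgrade this $A^e$-linear decomposition to a ring decomposition by iterated application of Lemma~\ref{lem:AAtoring}, obtaining ring structures on each summand making the above isomorphism a ring isomorphism $A\cong \1^{\times n}\times C$. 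The uniqueness clause of Lemma~\ref{lem:AAtoring} forces the induced ring structure on the $i$-th copy of $\1$ to agree with its standard one, because the projection $\alpha_i$ is already a ring morphism with respect to the standard structure. Hence $\1^{\times n}$ is a ring factor of $A$, as required.

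For the ``in particular'' clause, if $A$ has infinite degree there is nothing to prove; otherwise Lemma~\ref{cor:1factor}(b) applied to the decomposition $A\cong \1^{\times n}\times C$ gives $\deg A = n + \deg C \geq n$, so at most $\deg A$ ring morphisms $A\rightarrow\1$ can exist. There is no real obstacle here: all the work is already carried out in Lemma~\ref{lem:ringtoBB}(b), which in turn rests on the orthogonal idempotents constructed in Lemma~\ref{lem:idempotent}; the only thing to check carefully is that the induced ring structure on each $\1$ summand is the standard one, which is immediate from the uniqueness statement of Lemma~\ref{lem:AAtoring}.
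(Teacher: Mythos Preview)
Your proof is correct and follows essentially the same route as the paper's: apply Lemma~\ref{lem:ringtoBB}(b) with $f=\eta$ and $g_i=\alpha_i$, upgrade the $A^e$-module splitting to a ring splitting via Lemma~\ref{lem:AAtoring}, and finish with Lemma~\ref{cor:1factor}(b). The only cosmetic difference is in identifying the induced ring structure on each $\1$ summand with the standard one: the paper observes that $\alpha_i\eta=1_{\1}$ is a ring morphism for both structures, which is a little more direct than your appeal to the uniqueness clause of Lemma~\ref{lem:AAtoring} (that clause concerns the full isomorphism $h$, not a single projection, so strictly speaking you would still need the $\alpha_i\eta=1_\1$ observation to close the loop).
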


\begin{proof}
	Let $\alpha_i:A\rightarrow \1$ be distinct ring morphisms for $1\leq i\leq n$.
	By~Lemma~\ref{lem:ringtoBB}(b), we know that $\1^{\oplus n}$ is a direct summand of $A$ as an $A^e$-module, with projections $\alpha_i: A\rightarrow \1$ for~$1\leq i \leq n$. Moreover, Lemma~\ref{lem:AAtoring} shows that every such summand~$\1$ admits a ring structure, under which $\1^{\times n}$ becomes a ring factor of $A$ and the projections $\alpha_i$ are ring morphisms. In fact, these new ring structures on $\1$ are the original one, seeing how $\alpha_i\eta=1_{\1}$ is a ring morphism for every~$1\leq i \leq n$.
	Finally,~Corollary~\ref{cor:1factor}(b) shows that~$\deg(A)\geq n$.
\end{proof}

\begin{proposition}\label{prop:An}
	Let $A$ and $B$ be separable rings in $\K$ and suppose $B$ is indecomposable. Let $n\geq 1$. The following are equivalent:
	\begin{enumerate}[label=(\roman*)]
		\item There are (at least) $n$ distinct ring morphisms $A\rightarrow B$ in~$\K$.
		\item The ring $\1_B^{\times n}$ is a factor of $F_B(A)$ in $\M B$.
		\item There is a ring morphism $A^{[n]}\rightarrow B$ in~$\K$.
	\end{enumerate}
\end{proposition}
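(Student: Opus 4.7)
The plan is to establish (i)$\Leftrightarrow$(ii) and (ii)$\Leftrightarrow$(iii) in turn, exploiting throughout the monoidal adjunction $F_B\dashv U_B$ to translate between ring morphisms $A\rightarrow B$ in $\K$ and ring morphisms $F_B(A)\rightarrow \1_B$ in $\M B$. Since $B$ is indecomposable as a ring in $\K$, Remark \ref{rem:endo} identifies its $B$-linear endomorphisms with its $B^e$-linear endomorphisms, and it follows that $\1_B$ is indecomposable as a ring in $\M B$. The equivalence (i)$\Leftrightarrow$(ii) then falls out of Lemma \ref{lem:nrmorphisms} applied inside $\M B$: the adjunction carries the $n$ distinct ring morphisms $A\rightarrow B$ bijectively to $n$ distinct ring morphisms $F_B(A)\rightarrow \1_B$, which exhibit $\1_B^{\times n}$ as a ring factor, and conversely the $n$ projections off such a factor are themselves distinct.

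For (iii)$\Rightarrow$(ii), any ring morphism $h:A^{[n]}\rightarrow B$ makes $B$ into an $A^{[n]}$-algebra, so Remark \ref{rem:F_h} gives $F_B\cong F_h\circ F_{A^{[n]}}$ with $F_h$ strong monoidal. Applying $F_h$ to the isomorphism
\[
F_{A^{[n]}}(A)\;\cong\;\1_{A^{[n]}}^{\times n}\times A^{[n+1]}
\]
of Proposition \ref{prop:degreeproperties}(a) immediately produces $F_B(A)\cong \1_B^{\times n}\times F_h(A^{[n+1]})$, yielding (ii).

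For (ii)$\Rightarrow$(iii), Proposition \ref{prop:degreeproperties}(b) gives $F_B(A^{[n]})\cong F_B(A)^{[n]}$, so by the adjunction the claim reduces to the following statement internal to $\cat L:=\M B$, with $C:=F_B(A)$: \emph{if $\1_{\cat L}^{\times n}$ is a ring factor of $C$, then there exists a ring morphism $C^{[n]}\rightarrow \1_{\cat L}$}. I would prove this by induction on $n$. The case $n=1$ is tautological since $C^{[1]}=C$. For the step, note first that given any ring morphism $h:C^{[n]}\rightarrow \1_{\cat L}$, applying the monoidal functor $F_h$ to the internal version of Proposition \ref{prop:degreeproperties}(a) and using $F_h\circ F_{C^{[n]}}\cong \mathrm{id}_{\cat L}$ produces $C\cong \1_{\cat L}^{\times n}\times F_h(C^{[n+1]})$. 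Now suppose $\1_{\cat L}^{\times (n+1)}$ is a ring factor of $C$; then so is $\1_{\cat L}^{\times n}$, and the induction hypothesis furnishes such an $h$. Since $\1_{\cat L}$ is indecomposable and admits only the identity as a ring endomorphism, any ring morphism from a finite product of rings into $\1_{\cat L}$ factors through exactly one factor, so
\[
\#\{\text{ring morphisms }C\rightarrow \1_{\cat L}\}\;=\;n\,+\,\#\{\text{ring morphisms }F_h(C^{[n+1]})\rightarrow \1_{\cat L}\}.
\]
The hypothesis forces the left-hand side to be at least $n+1$, producing a ring morphism $F_h(C^{[n+1]})\rightarrow \1_{\cat L}$, which transports back via $F_h\dashv U_h$ and forgets to the desired ring morphism $C^{[n+1]}\rightarrow \1_{\cat L}$ in $\cat L$.

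The main obstacle is this inductive step: without a Krull--Schmidt hypothesis on $\cat L$ one cannot directly cancel the $n$ copies of $\1_{\cat L}$ from the two decompositions of $C$. The counting argument above bypasses that geometric cancellation by replacing it with an equality of Hom-set cardinalities, which uses only the indecomposability of $\1_{\cat L}$ and the uniqueness of its ring endomorphism.
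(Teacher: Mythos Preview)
Your proof is correct and follows the same architecture as the paper's: the $F_B\dashv U_B$ correspondence together with Lemma~\ref{lem:nrmorphisms} for (i)$\Leftrightarrow$(ii), and induction via Proposition~\ref{prop:degreeproperties}(a) for (ii)$\Leftrightarrow$(iii). The one substantive difference lies in the inductive step of (ii)$\Rightarrow$(iii). The paper transports the problem through the equivalence $\M B \simeq \Mod{\M{A^{[n]}}}{\overline{B}}$ of Proposition~\ref{prop:diagramequivalence}, obtains the two decompositions of $F_{\overline{B}}(F_{A^{[n]}}(A))$, and then invokes Proposition~\ref{prop:uniquedecomposition} to conclude that $\1_{\overline{B}}$ must appear as a ring factor of $F_{\overline{B}}(A^{[n+1]})$. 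You instead stay inside $\cat L=\M B$ throughout and replace the factor-cancellation by a count of ring morphisms into $\1_{\cat L}$: since every ring morphism from a finite product into the indecomposable $\1_{\cat L}$ factors through a unique projection and $\1_{\cat L}$ has only the identity as a ring endomorphism, the cardinality equality forces a ring morphism out of $F_h(C^{[n+1]})$. Both devices accomplish the same cancellation; yours is self-contained and makes explicit why no Krull--Schmidt hypothesis is needed, while the paper's route keeps the bookkeeping on the $A^{[n]}$-side and reuses the already-established Proposition~\ref{prop:uniquedecomposition}.
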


\begin{proof}
	Firstly, we claim there is a one-to-one correspondence between ring morphisms $\alpha:A\rightarrow B$ in $\K$ and ring morphisms $\beta: F_B(A) \rightarrow \1_B$ in~$\M B$. Indeed, this correspondence sends $\alpha: A\rightarrow B$ in $\K$ to the $B$-algebra morphism 
	$$B\T A \xrightarrow{1_B \T \alpha}B\T B\xrightarrow{\mu} B,$$
	and conversely, $\beta: F_B(A) \rightarrow \1_B$ gets mapped to
	$A\xrightarrow{\eta_B\T 1_A}
	 B\T A \xrightarrow{\beta} B$ in~$\K$.

	To show (i)$\Rightarrow$(ii), note that $n$ distinct ring morphisms $A\rightarrow B$ in~$\K$ give $n$ distinct ring morphisms $F_B(A)\rightarrow \1_B$ in $\M{B}$. 
	By~Lemma~\ref{lem:nrmorphisms}, $\1_B^{\times n}$ is a ring factor of~$F_B(A)$. 
	For (ii)$\Rightarrow$(i), suppose $\1_B^{\times n}$ is a ring factor of $F_B(A)$ in $\M B$ and consider the projections $\pr_i:F_B(A)\rightarrow \1_B$ with $1\leq i\leq n$. By the claim, there are at least $n$ distinct ring morphisms $A\rightarrow B$ in~$\K$.

	We show (ii)$\Rightarrow$(iii) by induction on $n$.
	The case $n=1$ has already been proven.
	Let $n\geq 1$ and suppose $\1_B^{\times (n+1)}$ is a ring factor of $F_B(A)$. 
	By the induction hypothesis, there exists a ring morphism $A^{[n]}\rightarrow B$. As usual, we write $\overline{B}$ for the separable ring in $\M {A^{[n]}}$ corresponding to the $A^{[n]}$-algebra~$B$ in~$\K$. 
	The diagram
	\begin{equation}\label{eq:An}
	\begin{tikzcd}
	\K
	\arrow{r}{F_{A^{[n]}}}
	\arrow{d}{F_B} 
	& \M {A^{[n]}}
	\arrow{d}{F_{\overline{B}}}\\
	\M B
	\arrow{r}{\simeq}
	& \Mod {\M {A^{[n]}}}{\overline{B}}
	\end{tikzcd}
	\end{equation}
	from~Proposition~\ref{prop:diagramequivalence} 
	shows that
	$F_B(A)$ is mapped to $F_{\overline{B}}(F_{A^{[n]}}(A))$ under the equivalence
	$\M B\simeq \Mod {\M {A^{[n]}}}{\overline{B}}$.
	It follows that $\1_{\overline{B}}^{\times (n+1)}$ is a ring factor of~$F_{\overline{B}}(F_{A^{[n]}}(A))$.
	On the other hand, by~Proposition~\ref{prop:degreeproperties}(a) we know that
	\begin{equation}\label{eq:FBFA}
	F_{\overline{B}}(F_{A^{[n]}}(A))
	\cong F_{\overline{B}}(\1_{A^{[n]}}^{\times n}\times A^{[n+1]})\cong \1_{\overline{B}}^{\times n}\times F_{\overline{B}}(A^{[n+1]}).
	\end{equation}
	Hence, $\1_{\overline{B}}$ is a ring factor of $F_{\overline{B}}(A^{[n+1]})$ by~Proposition~\ref{prop:uniquedecomposition} and we conclude there exists a ring morphism  $A^{[n+1]}\rightarrow \overline{B}$ in $\M {A^{[n]}}$.

	To show (iii)$\Rightarrow$(ii), suppose $B$ is an $A^{[n]}$-algebra and write $\overline{B}$ for the corresponding separable ring in $\M {A^{[n]}}$.
	Using diagram~\ref{eq:An} again,
	it is enough to show that $\1_{\overline{B}}^{\times n}$ is a ring factor of $F_{\overline{B}}(F_{A^{[n]}}(A))$. This follows from~\ref{eq:FBFA}.
\end{proof}

\begin{theorem}\label{th:nrmorphisms}
	Let $A$ and $B$ be separable rings in $\K$, where $A$ has finite degree and $B$ is indecomposable.
	There are at most $\deg(A)$ distinct ring morphisms from $A$ to~$B$. 
\end{theorem}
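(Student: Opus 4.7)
The plan is to deduce the theorem almost immediately from the equivalence (i)$\Leftrightarrow$(iii) in Proposition~\ref{prop:An}. Set $n = \deg(A) + 1$. If there were $n$ distinct ring morphisms $A \to B$, then Proposition~\ref{prop:An} would supply a ring morphism $A^{[n]} \to B$ in $\K$. But by the very definition of degree, $A^{[\deg(A)+1]} = 0$, so we would obtain a ring morphism from the zero object to $B$.

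The remaining step is to observe that no such ring morphism can exist when $B$ is indecomposable (hence nonzero). This is because a ring morphism must carry the unit to the unit: if $\varphi : 0 \to B$ is a ring morphism, then $\eta_B = \varphi \circ \eta_0 = 0$, which forces $1_B = \mu_B (\eta_B \T 1_B) = 0$ and thus $B = 0$, contradicting the indecomposability assumption (Definition~\ref{def:indecomposable} requires $B \neq 0$).

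I do not expect any genuine obstacle here; the substantive content has already been absorbed into Proposition~\ref{prop:An} (together with Lemma~\ref{lem:nrmorphisms}, which settled the case $B = \1$ and underpinned the inductive step). The only care required is the minor bookkeeping that a ring morphism from the zero ring exists only to the zero ring, which rules out the offending case. The full proof will therefore amount to little more than invoking Proposition~\ref{prop:An} with $n = \deg(A)+1$ and noting the contradiction with $B \neq 0$.
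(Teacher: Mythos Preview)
Your argument is correct. Both your proof and the paper's are one-line consequences of Proposition~\ref{prop:An}, but you invoke a different implication: the paper uses (i)$\Rightarrow$(ii) to conclude that $\1_B^{\times n}$ is a ring factor of $F_B(A)$, and then bounds $n\leq\deg_{\M B}(F_B(A))\leq\deg_{\K}(A)$ via Corollary~\ref{cor:1factor}(b) and Proposition~\ref{prop:degreeproperties}(b). You instead use (i)$\Rightarrow$(iii) with $n=\deg(A)+1$ to produce a ring morphism $A^{[\deg(A)+1]}=0\to B$, and then observe that a unital ring morphism out of the zero ring forces $\eta_B=0$ and hence $B=0$. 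Your route avoids the auxiliary degree inequalities entirely, at the modest cost of the bookkeeping that the zero ring admits no ring morphism to a nonzero ring; the paper's route, on the other hand, yields the slightly sharper intermediate inequality $n\leq\deg_{\M B}(F_B(A))$ along the way. Either way the substantive content sits in Proposition~\ref{prop:An}.
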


\begin{proof}
	If there are $n$ distinct ring morphisms from $A$ to~$B$, we know $\1_B^{\times n}$ is a ring factor of $F_B(A)$ by~Proposition~\ref{prop:An}. So,
	$n\leq \deg_{\M{B}}(F_B(A))\leq \deg_{\K}(A)$ by~Proposition~\ref{prop:degreeproperties}(b) and~Corollary~\ref{cor:1factor}(b).
\end{proof}

\begin{remark}
	The assumption $B$ is indecomposable is necessary in~Theorem~\ref{th:nrmorphisms}. Indeed, $\deg(\1^{\times{n}})=n$ but $\1^{\times{n}}$ has at least $n!$ ring endomorphisms.
\end{remark}

%\begin{remark}
%	Suppose $\K$ is nice and $A,B$ are separable rings of finite degree in~$\K$. Theorem~\ref{th:nrmorphisms} shows there is only a finite number of ring morphisms from $A$ to~$B$, since $B$ can be written as a finite product of indecomposable rings.
%\end{remark}

\section{Quasi-Galois Theory}

Suppose $(A,\mu, \eta)$ is a nonzero ring in $\K$ and $\Gamma$ is a finite set of ring endomorphisms of $A$ with $1_A\in\Gamma$. Consider the ring $\prod_{\gamma\in \Gamma} A_{\gamma}$, where we write $A_{\gamma}=A$ for all $\gamma \in \Gamma$ to keep track of the different copies of~$A$.
We define ring morphisms $\varphi_1:A\rightarrow \prod_{\gamma\in \Gamma} A_{\gamma}$ 
by $\pr_{\gamma}\varphi_1=1_A$ 
and $\varphi_2:A\rightarrow \prod_{\gamma\in \Gamma} A_{\gamma}$ by $\pr_{\gamma}\varphi_2=\gamma$ for all $\gamma \in \Gamma$.
Thus, $\varphi_1$ renders the (standard) left $A$-algebra structure on $\prod_{\gamma\in \Gamma} A_{\gamma}$ and we introduce a right
$A$-algebra structure on $\prod_{\gamma\in \Gamma} A_{\gamma}$ via $\varphi_2$. 

\begin{definition}\label{def:lambda}
	We will consider the following ring morphism:
	$$\begin{tikzcd}\lambda_{\Gamma}=\lambda: \hspace{0.1 cm}
	 A\T A \arrow{r} & \prod_{\gamma\in \Gamma} A_{\gamma}\end{tikzcd}\hspace{0.5 cm}
	\text{with}\hspace{0.1 cm}
	\pr_{\gamma} \lambda=\mu (1 \T \gamma).$$
	Note that $\lambda(1 \T \eta)=\varphi_1$ and $\lambda(\eta \T 1)=\varphi_2$,
	\begin{equation}\label{eq:lambdatriangle}
	\begin{tikzcd}[column sep=small]
	&& A 
	\arrow[swap, yshift=0.5ex, xshift=-0.5ex]{lld}{1\T \eta}
	\arrow[ yshift=-0.3ex, xshift=0.7ex]{lld}{\eta\T 1}
	\arrow[ yshift=0.5ex, xshift=0.5ex]{rrd}{\varphi_2}
	\arrow[swap,yshift=-0.3ex, xshift=-0.7ex]{rrd}{\varphi_1}&&\\
	A\T A
	\arrow{rrrr}{\lambda}
	&&&& \prod_{\gamma\in \Gamma} A_{\gamma} ,
	\end{tikzcd}\end{equation}
	so that $\lambda$ is an $A^e$-algebra morphism. 
\end{definition}

\begin{lemma}\label{lem:lambdaiso}
	Suppose $\lambda_{\Gamma}:A\T A \rightarrow \prod_{\gamma\in \Gamma} A_{\gamma}$ is an isomorphism.
	\begin{enumerate}[label=(\alph*)]
	
		\item There is an $A^e$-linear morphism $\sigma: A \rightarrow A\T A$ such that $\mu (1 \T \gamma)\sigma =\delta_{1,\gamma}$ for every $\gamma \in \Gamma$. In particular, $A$ is separable.

		%\item Let $\gamma \in \Gamma$. If there exists a nonzero $A$-linear morphism $\alpha:A\rightarrow A$ with $\alpha\gamma= \alpha$, then $\gamma=1$.

		\item Let $\gamma \in \Gamma$. If there exists a nonzero ring $B$ in $\K$ and ring morphism $\alpha:A\rightarrow B$ with $\alpha \gamma= \alpha$, then $\gamma=1$.

		\item The ring $A$ has degree $|\Gamma|$ in~$\K$.

	\end{enumerate}
\end{lemma}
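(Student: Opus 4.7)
The plan is to exploit the hypothesis that $\lambda_\Gamma$ is an isomorphism, with the $\gamma=1$ component singled out by the fact that $\pr_1\lambda=\mu$.

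For (a), I would define $\sigma$ as the composite $A\xrightarrow{\iota_1}\prod_{\gamma\in\Gamma}A_\gamma\xrightarrow{\lambda^{-1}}A\T A$, where $\iota_1$ is the inclusion into the $\gamma=1$ factor. Since $\pr_\gamma\lambda=\mu(1\T\gamma)$, composing with $\iota_1$ immediately gives $\mu(1\T\gamma)\sigma=\pr_\gamma\iota_1=\delta_{1,\gamma}\cdot 1_A$. The point that requires checking is that $\iota_1$ is $A^e$-linear when $\prod_\gamma A_\gamma$ carries the bimodule structure induced by $\varphi_1\T\varphi_2$: on the $\gamma=1$ slot the right action via $\varphi_2$ is literally right multiplication (since $\varphi_2$ there is $1_A$), and on all other slots the zero entry absorbs the twist by $\gamma$. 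Taking $\gamma=1$ then yields $\mu\sigma=1_A$, so $A$ is separable.

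For (b), I would use (a) with $\gamma\neq 1$, where it reads $\mu(1\T\gamma)\sigma=0$. Postcomposing with $\alpha$ and using that $\alpha$ is a ring morphism together with the hypothesis $\alpha\gamma=\alpha$, one computes
\[
0=\alpha\mu(1\T\gamma)\sigma=\mu_B(\alpha\T\alpha\gamma)\sigma=\mu_B(\alpha\T\alpha)\sigma=\alpha\mu\sigma=\alpha.
\]
Since $B$ is nonzero and $\alpha\eta_A=\eta_B\neq 0$, the map $\alpha$ cannot be zero, so this forces $\gamma=1$.

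For (c), the diagram~\eqref{eq:lambdatriangle} shows that $\lambda$ is an $A$-algebra isomorphism $A\T A\xrightarrow{\sim}A\times\prod_{\gamma\neq 1}A_\gamma$, where the left $A$-action on $A\T A$ is via $1\T\eta$ and the first projection is $\mu$. By the uniqueness statement in Theorem~\ref{th:splittingtheorem}, this identifies $A^{[2]}\cong\prod_{\gamma\neq 1}A_\gamma\cong\1_A^{\times(|\Gamma|-1)}$ as $A$-algebras, i.e.\ as rings in $\M A$. Corollary~\ref{cor:1degree} gives $\deg_{\M A}(A^{[2]})=|\Gamma|-1$, and the remark following the definition of the splitting tower then yields $\deg_\K(A)=|\Gamma|$.

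The main technical point is the $A^e$-linearity check in (a); once that is in place, parts (b) and (c) are short deductions from (a) and from the splitting theorem combined with the degree formulas already established in the previous section.
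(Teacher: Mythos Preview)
Your argument is correct and follows essentially the same route as the paper for parts (a) and (b): the paper defines $\sigma:=\lambda^{-1}\inc_1$ and reads off $\mu(1\T\gamma)\sigma=\pr_\gamma\inc_1=\delta_{1,\gamma}$, and for (b) runs the identical chain $\alpha=\alpha\mu\sigma=\mu(\alpha\T\alpha)\sigma=\mu(\alpha\T\alpha\gamma)\sigma=\alpha\mu(1\T\gamma)\sigma=\alpha\delta_{\gamma,1}$. For (c) the paper is one step shorter: it observes directly that $F_A(A)\cong\1_A^{\times|\Gamma|}$ in $\M A$ and invokes Proposition~\ref{prop:degreeproperties}(c) (with $B=A$) together with Corollary~\ref{cor:1degree}, rather than first isolating $A^{[2]}\cong\1_A^{\times(|\Gamma|-1)}$ via Theorem~\ref{th:splittingtheorem} and then using the splitting-tower remark; both arrive at $\deg_\K(A)=|\Gamma|$ by the same underlying computation.
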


\begin{proof}
	To prove~(a), consider the $A^e$-linear morphism $\sigma:=\lambda^{-1} \inc_1 : A \rightarrow A\T A$. The following diagram shows that $\mu (1 \T \gamma)\sigma =\delta_{1,\gamma}$:
	$$\begin{tikzcd} 
	A \arrow{rr}{\sigma} \arrow[hook, swap]{rd}{\inc_1}
	&& A\T A  \arrow{r}{1\T \gamma} \arrow[swap]{rd}{\lambda}
	& A\T A  \arrow{r}{\mu}
	& A.\\
	& \prod_{\gamma\in \Gamma} A_{\gamma}  \arrow[swap]{ru}{\lambda^{-1}}
	&&\prod_{\gamma\in \Gamma} A_{\gamma}  \arrow[two heads, swap]{ru}{\pr_\gamma}
	\end{tikzcd}$$
	%For~(b), suppose $\alpha\gamma= \alpha$ and 
	%$\sigma: A \rightarrow A\T A$ as in (a). We get 
	%$\alpha=\alpha\mu \sigma
	%=\mu(1\T \alpha)\sigma
	%=\mu (1\T \alpha)(1\T \gamma)\sigma
	%=\alpha\mu (1\T \gamma)\sigma
	%= \alpha \delta_{\gamma,1}$. 
	%Hence, $\alpha=0$ or $\gamma=1$.
	For~(b), suppose $\alpha\gamma= \alpha$ and 
	$\sigma: A \rightarrow A\T A$ as in (a). We get 
	$$\alpha=\alpha\mu \sigma
	=\mu(\alpha\T \alpha)\sigma
	=\mu (\alpha\T \alpha)(1\T \gamma)\sigma
	=\alpha\mu (1\T \gamma)\sigma
	=\alpha \delta_{\gamma,1}.$$
	Hence, $\alpha=0$ or $\gamma=1_A$.
	Finally, given that 
	$F_A(A)\cong \1_A^{\times |\Gamma|}$ in $\M A$, Proposition~\ref{prop:degreeproperties}(c) shows that $\deg(A)~=~|\Gamma|$.
\end{proof}

\begin{definition}\label{def:galois}
	Suppose $A$ is a nonzero ring in $\K$ and $\Gamma$ is a finite group of ring automorphisms of $A$.
	We say $A$ is \emph{quasi-Galois in} $\K$ with group $\Gamma$ if $\lambda_{\Gamma}:A\T A \rightarrow \prod_{\gamma\in\Gamma}A_{\gamma}$ is an isomorphism. 
	We also call
	$F_A: \K\longrightarrow \M A$ a \emph{quasi-Galois extension} with group~$\Gamma$.
\end{definition} 

\begin{example}
	Let $A:=\1^{\times{n}}$ and consider the ring morphism $\gamma:=(1 2\cdots n)$ which permutes the factors. Then $A$ is quasi-Galois with group $\Gamma=\{\gamma^i\mid 0\leq i\leq n-1\}\cong \Z_n$.
	Indeed, the isomorphism $\lambda:A\T A\rightarrow A^{\times n}$ constructed in the proof of~Lemma~\ref{lem:1degree} is exactly~$\lambda_{\Gamma}$.
	In particular, $\Gamma$ does not always contain all ring automorphisms of~$A$.
\end{example}

\begin{example}\label{ex:etalegalois}
	Let $R$ be a commutative ring, $A$ a commutative $R$-algebra and $\Gamma$ a finite group of ring automorphisms of $A$ over $R$. Suppose $A$ is a Galois extension of $R$ relative to $\Gamma$ in the sense of~\cite[App.]{AG}.
	In particular, $A$ is projective and separable as an $R$-module. Then the ring $A$ in $\K=\D(R)$ is quasi-Galois with group~$\Gamma$.
\end{example}

\begin{lemma}\label{lem:qgextensions}
	Let $A$ be quasi-Galois of degree $d$ in $\K$ with group $\Gamma$ and suppose $F:\K\rightarrow~\cat L$ is an additive monoidal functor. If $F(A)\neq 0$, then $F(A)$ is quasi-Galois of degree $d$ in $\cat L$ with group $F(\Gamma)=\{F(\gamma)\mid \gamma \in \Gamma\}$.
	In particular, being quasi-Galois is stable under extension-of-scalars.
\end{lemma}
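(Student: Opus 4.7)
The plan is to transport the isomorphism $\lambda_\Gamma$ along $F$ and identify its image with $\lambda_{F(\Gamma)}$. Since $F$ is additive monoidal, it preserves the tensor $A\T A$ and the finite product $\prod_{\gamma\in\Gamma}A_\gamma$, and the projections $F(\pr_\gamma)$ agree with projections from $\prod_{\gamma\in\Gamma}F(A)_\gamma$. Composing with these projections and using monoidality of $F$, one gets $\pr_\gamma\circ F(\lambda_\Gamma)=\mu_{F(A)}(1\T F(\gamma))$, so $F(\lambda_\Gamma)$ is precisely the candidate map $\lambda_{F(\Gamma)}:F(A)\T F(A)\to\prod_{\gamma\in\Gamma}F(A)_\gamma$ for the set $\{F(\gamma)\mid\gamma\in\Gamma\}$ (indexed with multiplicity over $\Gamma$). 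Since $\lambda_\Gamma$ is an isomorphism, so is $F(\lambda_\Gamma)$.

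The key step is to show the elements $F(\gamma)$, $\gamma\in\Gamma$, are pairwise distinct, so that $F(\Gamma)$ inherits a group structure of order $|\Gamma|$ from $\Gamma$ and is a genuine group of ring automorphisms of $F(A)$. Suppose $F(\gamma_1)=F(\gamma_2)$ for some $\gamma_1\ne\gamma_2$ in $\Gamma$. Composing $\eta_{F(A)}\T 1$ with the two corresponding components of $F(\lambda_\Gamma)$ shows these two components are equal as morphisms $F(A)\T F(A)\to F(A)$. Then $\pr_{\gamma_1}-\pr_{\gamma_2}:\prod_{\gamma\in\Gamma}F(A)_\gamma\to F(A)$ vanishes after precomposition with $F(\lambda_\Gamma)$; since $F(\lambda_\Gamma)$ is an isomorphism this forces $\pr_{\gamma_1}=\pr_{\gamma_2}$, which contradicts $F(A)\neq 0$ (the difference is $1_{F(A)}$ on the appropriate summand inclusion). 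This is the only place the hypothesis $F(A)\neq 0$ is used.

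Once distinctness is in hand, $F(\Gamma)$ is a group of ring automorphisms (closed under composition and inversion because $F$ is a functor and $F(\gamma)F(\gamma^{-1})=1_{F(A)}$), and $F(\lambda_\Gamma)=\lambda_{F(\Gamma)}$ is an isomorphism, so $F(A)$ is quasi-Galois in $\cat L$ with group $F(\Gamma)$ by Definition~\ref{def:galois}. Finally, Lemma~\ref{lem:lambdaiso}(c) applied in $\cat L$ yields $\deg_{\cat L}(F(A))=|F(\Gamma)|=|\Gamma|=d$.

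The one subtle point is the distinctness of the $F(\gamma)$; the rest reduces to bookkeeping about how an additive monoidal functor interacts with the definition of $\lambda_\Gamma$.
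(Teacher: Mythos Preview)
Your proof is correct and follows essentially the same route as the paper: transport $\lambda_\Gamma$ through the additive monoidal functor $F$, identify it componentwise with $\lambda_{F(\Gamma)}$, and deduce distinctness of the $F(\gamma)$ from the fact that $F(\lambda_\Gamma)$ is an isomorphism together with $F(A)\neq 0$. The detour through precomposition with $\eta_{F(A)}\T 1$ is unnecessary and slightly muddled --- once $F(\gamma_1)=F(\gamma_2)$, the components $\pr_{\gamma_i}F(\lambda_\Gamma)=\mu_{F(A)}(1\T F(\gamma_i))$ are \emph{directly} equal, which is all you need before invoking invertibility of $F(\lambda_\Gamma)$ --- but this does not affect the overall correctness.
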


\begin{proof}
	We immediately see that
	$$F(\lambda_{\Gamma}): F(A)\T F(A)\cong F(A\T A) \longrightarrow \prod_{\gamma\in \Gamma}F(A)$$ 
	is an isomorphism in $\cat L$, so it suffices to show $\Gamma\cong F(\Gamma)$ and $F(\lambda_{\Gamma})=\lambda_{F(\Gamma)}$.
	Now, $\lambda_\Gamma$ is defined by $\pr_{\gamma}\lambda_\Gamma=\mu_A(1_A\T \gamma)$, 
	hence $\pr_{\gamma}F(\lambda_\Gamma)=\mu_{F(A)}(1_{F(A)}\T F(\gamma))$ for every $\gamma\in \Gamma$.
	In particular, the morphisms $\mu_{F(A)}(1_{F(A)}\T F(\gamma))$ with $\gamma\in \Gamma$ are distinct. This shows the morphisms $F(\gamma)$ with $\gamma\in \Gamma$ are distinct, so that $\Gamma\cong F(\Gamma)$ and $F(\lambda_{\Gamma})=\lambda_{F(\Gamma)}$.
\end{proof}

\begin{proposition}\label{prop:actionofgamma}
	Suppose $A$ is quasi-Galois in~$\K$ with group~$\Gamma$.

	\begin{enumerate}[label=(\alph*)]
		\item If $B$ is a separable indecomposable $A$-algebra,
		then $\Gamma$ acts faithfully and transitively on the set of ring morphisms from $A$ to~$B$. In particular, there are exactly $\deg(A)$ distinct ring morphisms from $A$ to~$B$ in~$\K$. 

		\item If $A$ is indecomposable then $\Gamma$ contains all ring endomorphisms of~$A$.
	\end{enumerate}
\end{proposition}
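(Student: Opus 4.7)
The plan is to prove (a) first and obtain (b) as the special case $B=A$. For (a), I let $\Gamma$ act on the set of ring morphisms $A\to B$ by right composition, $\alpha\cdot\gamma:=\alpha\gamma$; this is a (right) group action because each $\gamma\in\Gamma$ is a ring automorphism. The count $\deg(A)$ will then follow automatically from faithfulness and transitivity, using $|\Gamma|=\deg(A)$ from Lemma~\ref{lem:lambdaiso}(c).

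Faithfulness I expect to read off immediately from Lemma~\ref{lem:lambdaiso}(b): if $\gamma\in\Gamma$ fixes every ring morphism $A\to B$, then in particular it fixes the structural morphism $h:A\to B$ making $B$ into an $A$-algebra, and since $B\neq 0$ we conclude $\gamma=1_A$. In particular the stabiliser of any morphism is trivial.

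For transitivity, given $\alpha,\beta:A\to B$, I form the ring morphism
\[
\phi:=\mu_B(\alpha\T\beta):A\T A\longrightarrow B
\]
(a ring morphism by commutativity of $B$) and post-compose with $\lambda_\Gamma^{-1}$ to obtain a ring morphism $\psi:=\phi\circ\lambda_\Gamma^{-1}:\prod_{\gamma\in\Gamma}A_\gamma\to B$. The crucial step will be to show that $\psi$ factors through a single projection, i.e.\ $\psi=f\circ\pr_{\gamma_0}$ for some ring morphism $f:A\to B$ and some $\gamma_0\in\Gamma$. Once this is in hand, the identities $\lambda(1\T\eta)=\varphi_1$ and $\lambda(\eta\T 1)=\varphi_2$ from diagram~(\ref{eq:lambdatriangle}) give
\[
\alpha=\phi(1\T\eta)=\psi\varphi_1=f\,\pr_{\gamma_0}\varphi_1=f,\qquad \beta=\phi(\eta\T 1)=\psi\varphi_2=f\gamma_0=\alpha\gamma_0,
\]
establishing $\beta=\alpha\gamma_0$. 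Part (b) then follows by applying (a) with $B:=A$ (indecomposable by hypothesis): any ring endomorphism $\beta:A\to A$ equals $1_A\cdot\gamma_0=\gamma_0$ for some $\gamma_0\in\Gamma$.

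The main obstacle is precisely the factoring claim for $\psi$, and I plan to extract it from indecomposability of $B$ by an orthogonal-idempotents argument. The morphisms $\bar e_\gamma:=\inc_\gamma\circ\eta_A:\1\to\prod_\gamma A_\gamma$ are pairwise orthogonal under $\mu_{\prod A_\gamma}$ and sum to $\eta_{\prod A_\gamma}$, so their images $f_\gamma:=\psi\circ\bar e_\gamma:\1\to B$ are pairwise orthogonal and sum to $\eta_B$. Multiplication by $f_\gamma$ then gives pairwise orthogonal $B^e$-linear idempotent endomorphisms of $B$ whose sum is $1_B$, and Definition~\ref{def:indecomposable} forces all but one to vanish. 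Tracing this back through $\pi_\gamma:=\inc_\gamma\pr_\gamma=\mu_{\prod A}(\bar e_\gamma\T 1)$, I get $\psi\pi_\gamma=0$ for $\gamma\neq\gamma_0$ and $\psi\pi_{\gamma_0}=\psi$, so $\psi=\psi\inc_{\gamma_0}\circ\pr_{\gamma_0}$ factors as required (with $f:=\psi\inc_{\gamma_0}$ automatically a ring morphism since $\pr_{\gamma_0}$ is a split epimorphism of rings).
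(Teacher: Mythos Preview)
Your argument is correct, but your route to transitivity is quite different from the paper's. The paper dispatches transitivity in one line: since the action is free (Lemma~\ref{lem:lambdaiso}(b) shows the stabiliser of \emph{any} $\alpha:A\to B$ is trivial, not just that the action is faithful), every orbit has size $|\Gamma|=\deg(A)$; but Theorem~\ref{th:nrmorphisms} bounds $|S|\leq\deg(A)$, so there is a single orbit. Your approach instead constructs the connecting $\gamma_0$ explicitly by pushing the pair $(\alpha,\beta)$ through $\lambda_\Gamma^{-1}$ and then using indecomposability of $B$ to force the resulting ring map $\prod_\gamma A_\gamma\to B$ to factor through one projection. This is more hands-on and, notably, bypasses Theorem~\ref{th:nrmorphisms} entirely (and hence the machinery of Proposition~\ref{prop:An} and Lemma~\ref{lem:nrmorphisms}); the paper's proof is shorter precisely because that counting bound has already been established. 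One small wording issue: your sentence ``In particular the stabiliser of any morphism is trivial'' does not follow from faithfulness as stated, but it \emph{does} follow directly from Lemma~\ref{lem:lambdaiso}(b) applied to an arbitrary $\alpha$ rather than just to $h$---you should phrase it that way.
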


\begin{proof}
	Note that the set $S$ of ring morphisms from $A$ to $B$ is non-empty and $\Gamma$ acts on $S$ by precomposition.
	The action is faithful by~Lemma~\ref{lem:lambdaiso}(b) and transitive because $|S|\leq\deg A=|\Gamma|$ by~Theorem~\ref{th:nrmorphisms}. 
	In particular, $A$ has exactly $\deg A=|\Gamma|$ ring endomorphisms in~$\K$.
\end{proof}

By the above proposition, we can simply say an indecomposable ring $A$ in $\K$ is quasi-Galois, with the understanding that the Galois-group $\Gamma$ contains all ring endomorphisms of~$A$.

\begin{theorem}\label{th:galoisequivalences}
	Let $A$ be a separable indecomposable ring of finite degree in $\K$ and write $\Gamma$ for the set of ring endomorphisms of~$A$. The following are equivalent:
	\begin{enumerate}[label=(\roman*)]
		\item $|\Gamma|=\deg(A)$.
		\item $F_A(A)\cong \1_A^{\times t}$ in $\M A$ for some $t> 0$.
		\item $\lambda_{\Gamma}: A\T A\rightarrow \prod_{\gamma\in \Gamma} A_{\gamma}$ is an isomorphism.
		\item $\Gamma$ is a group and $A$ is quasi-Galois in~$\K$ with group~$\Gamma$.
	\end{enumerate}
\end{theorem}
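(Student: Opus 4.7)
The plan is to prove the cycle of implications (i) $\Leftrightarrow$ (ii) $\Leftrightarrow$ (iii) $\Leftrightarrow$ (iv), writing $d:=\deg(A)$ throughout. The backbone is Proposition~\ref{prop:An} together with the bijection in its proof between ring endomorphisms $\alpha:A\to A$ in $\K$ and ring morphisms $F_A(A)\to\1_A$ in $\M A$ (specialising to $B=A$, which is indecomposable).

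For (i) $\Rightarrow$ (ii): if $|\Gamma|=d$ then Proposition~\ref{prop:An} gives a decomposition $F_A(A)\cong \1_A^{\times d}\times R$ in $\M A$. Comparing degrees via Proposition~\ref{prop:degreeproperties}(c) and Corollary~\ref{cor:1factor}(b) forces $\deg(R)=0$, hence $R=0$. For (ii) $\Rightarrow$ (i): the $t$ projections $\1_A^{\times t}\twoheadrightarrow\1_A$ in $\M A$ correspond via the bijection to $t$ distinct ring endomorphisms of $A$, so $|\Gamma|\ge t$; Corollary~\ref{cor:1degree} together with Proposition~\ref{prop:degreeproperties}(c) gives $t=d$, and Theorem~\ref{th:nrmorphisms} completes $|\Gamma|=d$.

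For (ii) $\Rightarrow$ (iii): given an iso $\phi:F_A(A)\xrightarrow{\sim}\1_A^{\times d}$, the compositions $\pr_i\circ\phi$ are $d$ distinct ring morphisms $F_A(A)\to\1_A$ in $\M A$, corresponding to $d$ distinct endomorphisms $\delta_1,\ldots,\delta_d$ of $A$ which exhaust $\Gamma$ by (i). By construction the components $\mu(1\T\delta_i)$ of $\lambda_\Gamma$ are exactly the images of the $\pr_i\circ\phi$ under the bijection; so $\lambda_\Gamma$ matches $\phi$ under the equivalence of Remark~\ref{rem:algebracorrespondence} and is therefore iso. Conversely, (iii) $\Rightarrow$ (ii) is immediate: translating $\lambda_\Gamma$ iso across the same equivalence gives $F_A(A)\cong\1_A^{\times|\Gamma|}$ in $\M A$.

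The implication (iv) $\Rightarrow$ (iii) is immediate from Definition~\ref{def:galois}, so the real content lies in (iii) $\Rightarrow$ (iv), where $\Gamma$ must be promoted from a monoid to a group of automorphisms; this is the expected main obstacle. Let $\Gamma$ act on itself on the right by precomposition, $\gamma\cdot\alpha:=\alpha\gamma$. Lemma~\ref{lem:lambdaiso}(b), applied with $B=A$ and any $\alpha\in\Gamma$ (nonzero since $A\neq 0$), shows the action is free; combined with $|\Gamma|<\infty$, freeness forces each orbit to have size $|\Gamma|$, so the action has a single orbit and is transitive. Thus every $\alpha\in\Gamma$ admits some $\gamma\in\Gamma$ with $\alpha\gamma=1_A$, i.e.\ a right inverse. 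The standard monoid argument — if $ab=1$ and $bc=1$ then $a=a(bc)=(ab)c=c$, whence $ba=bc=1$ — promotes right inverses to two-sided inverses, so $\Gamma$ is a group of ring automorphisms, and $\lambda_\Gamma$ iso is precisely the definition of quasi-Galois with group $\Gamma$.
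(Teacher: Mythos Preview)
Your argument is mostly sound and closely parallels the paper's, but there is a genuine gap in (iii)$\Rightarrow$(iv). You invoke an orbit--stabilizer principle (``freeness forces each orbit to have size $|\Gamma|$''), which is valid for \emph{group} actions but not for monoid actions. At this point $\Gamma$ is only a finite monoid, and Lemma~\ref{lem:lambdaiso}(b) gives you only trivial stabilizers ($\alpha\gamma=\alpha\Rightarrow\gamma=1_A$); for a monoid this does \emph{not} imply that the orbit map $\gamma\mapsto\alpha\gamma$ is injective. To get injectivity you would need to cancel $\alpha$ on the left in $\alpha\gamma_1=\alpha\gamma_2$, which presupposes exactly the invertibility you are trying to establish.

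The repair is short and stays within your strategy: from $\alpha\gamma=\alpha\Rightarrow\gamma=1_A$ (all $\alpha,\gamma\in\Gamma$) together with $|\Gamma|<\infty$, take any $\gamma\in\Gamma$ and note that its powers eventually repeat, say $\gamma^i=\gamma^j$ with $i<j$; then $\gamma^i\cdot\gamma^{j-i}=\gamma^i$ forces $\gamma^{j-i}=1_A$, so $\gamma$ is invertible and $\Gamma$ is a group. The paper instead argues via Lemma~\ref{lem:lambdaiso}(a): writing $\sigma$ for the section with $\mu(1\T\gamma)\sigma=\delta_{1,\gamma}$, one has $\gamma=\mu(\gamma\T 1)(1\T\gamma)\sigma$, so $(1\T\gamma)\sigma\neq 0$, hence some component $\mu(1\T\gamma'\gamma)\sigma$ is nonzero, forcing $\gamma'\gamma=1_A$; then $\gamma'(\gamma\gamma')=\gamma'$ and part~(b) give $\gamma\gamma'=1_A$. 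Once patched, your route uses only part~(b) and finiteness, which is arguably tidier; the remaining implications in your proposal match the paper's proof.
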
 

\begin{proof}
	First note that $d:=\deg(A)=\deg(F_A(A))$ by~Proposition~\ref{prop:degreeproperties}(c).
	To show (i)$\Rightarrow$(ii), recall that $\1_A^{\times d}$ is a ring factor of $F_A(A)$ if $|\Gamma|=d$ by~Lemma~\ref{prop:An}. By~Corollary~\ref{cor:1factor}(b), we know $F_A(A)\cong~\1_A^{\times d}$.
	For (ii)$\Rightarrow$(iii), we note that $t=d$ and consider an $A$-algebra isomorphism $l:A\T A\xrightarrow{\simeq} A^{\times d}$.
	We define ring endomorphisms
	$$\begin{tikzcd}
	\alpha_i: A\arrow{r}{\eta\T 1_A}
	& A\T A \arrow{r}{l}
	& A^{\times d} \arrow{r}{\pr_i}
	& A,
	\end{tikzcd}
	\hspace{1.5 cm} i=1,\ldots,d,$$
	such that
	$\mu(1_A\T \alpha_i)=\pr_i l (\mu\T 1_A)(1_A\T \eta \T 1_A)=\pr_i l$ 
	for every~$i$. This shows the $\alpha_i$ are all distinct, so that
	$\Gamma=\{\alpha_i\mid 1\leq i\leq d\}$ by~Theorem~\ref{th:nrmorphisms} and $l=\lambda_{\Gamma}$.
	For (iii)$\Rightarrow$(iv), we show that
	every $\gamma \in \Gamma$ is an automorphism.
	By~Lemma~\ref{lem:lambdaiso} (a), we can find an $A^e$-linear morphism $\sigma:A\rightarrow A\T A$ such that $\mu (1 \T \gamma)\sigma =\delta_{1,\gamma}$ for every~$\gamma\in \Gamma$.
	Let $\gamma \in \Gamma$ and note that $\gamma=\mu (\gamma\T 1)(1\T\gamma)\sigma$ so that 
	$(1\T \gamma)\sigma:A\rightarrow A\T A$ is nonzero.
	Thus there exists $\gamma'\in \Gamma$ such that 
	$$\pr_{\gamma'} \lambda_{\Gamma} (1\T \gamma)\sigma=\mu (1\T\gamma')(1\T \gamma)\sigma =\delta_{1,\gamma' \gamma}$$ is nonzero. This means $1=\gamma'\gamma $ and 
	$\gamma' (\gamma \gamma')=\gamma'$ so $\gamma \gamma'=1$ by~Lemma~\ref{lem:lambdaiso}(b).
	Finally, (iv)$\Rightarrow$(i) is the last part of~Lemma~\ref{lem:lambdaiso}.
\end{proof}

\begin{corollary}\label{cor:galoisfactors}
	Let $A, B$ and $C$ be separable rings in $\K$ with $A\cong B\times C$, and suppose $B$ is indecomposable.
	If $F_A(A)\cong \1_A^{\times d}$, then $B$ is quasi-Galois. 
	In particular, being quasi-Galois is stable under passing to indecomposable factors. 
\end{corollary}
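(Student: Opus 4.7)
The plan is to verify condition~(ii) of Theorem~\ref{th:galoisequivalences} for $B$, namely to produce an isomorphism $F_B(B)\cong \1_B^{\times t}$ in $\M B$ with $t>0$. Once this is in hand, Corollary~\ref{cor:1degree} together with Proposition~\ref{prop:degreeproperties}(c) forces $\deg_{\K}(B)=t<\infty$, and the implication (ii)$\Rightarrow$(iv) of Theorem~\ref{th:galoisequivalences} then delivers that $B$ is quasi-Galois.

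To build such an isomorphism, I would exploit that the projection $\pr_B:A\cong B\times C\twoheadrightarrow B$ is a ring morphism, so that $B$ is a separable $A$-algebra; write $\overline{B}\in\M A$ for the corresponding ring. Proposition~\ref{prop:diagramequivalence} supplies an equivalence $\M B\simeq\Mod{\M A}{\overline{B}}$ under which $F_{\overline{B}}\circ F_A$ is identified with $F_B$ and $F_{\overline B}(\1_A)=\overline B$ is identified with $\1_B$. Feeding the hypothesis $F_A(A)\cong \1_A^{\times d}$ into the additive monoidal functor $F_{\overline B}$ and using the identifications above then yields
\begin{equation*}
F_B(A)\;\cong\;F_{\overline B}\bigl(F_A(A)\bigr)\;\cong\;F_{\overline B}(\1_A)^{\times d}\;\cong\;\1_B^{\times d}
\end{equation*}
in $\M B$. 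On the other hand, $A\cong B\times C$ gives $F_B(A)\cong F_B(B)\times F_B(C)$, and comparing the two descriptions produces an isomorphism $F_B(B)\times F_B(C)\cong \1_B^{\times d}$. Since $B$ is indecomposable in~$\K$, the unit $\1_B=B$ is indecomposable as a ring in~$\M B$ (any $\1_B$-linear endomorphism is just a $B$-linear endomorphism in~$\K$), so Proposition~\ref{prop:uniquedecomposition} forces $F_B(B)\cong \1_B^{\times t}$ for some $1\le t\le d$, completing the verification.

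The ``in particular'' clause is then a formality: if $A$ itself is quasi-Galois with group $\Gamma$, the defining isomorphism $\lambda_\Gamma$ supplies $F_A(A)\cong \1_A^{\times|\Gamma|}$ as $A$-algebras, and the main statement applies to each indecomposable factor. The only genuinely delicate step is the bookkeeping around Proposition~\ref{prop:diagramequivalence}---confirming that the equivalence $\M B\simeq\Mod{\M A}{\overline B}$ intertwines products and matches units with $F_{\overline B}(\1_A)$---but this is automatic from its monoidality.
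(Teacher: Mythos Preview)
Your proof is correct and follows essentially the same route as the paper: both arguments reduce to showing $F_B(A)\cong\1_B^{\times d}$ in $\M B$, then extract $F_B(B)\cong\1_B^{\times t}$ as a ring factor (using that $\1_B$ is indecomposable) and invoke Theorem~\ref{th:galoisequivalences}. The only cosmetic difference is in how you transport the hypothesis $F_A(A)\cong\1_A^{\times d}$ down to $\M B$: the paper uses the product decomposition $\M A\cong\M B\times\M C$ directly, whereas you route through Proposition~\ref{prop:diagramequivalence} and the factorization $F_B\simeq F_{\overline B}\circ F_A$---but these are two phrasings of the same passage.
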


\begin{proof}
	Consider the decomposition
	$\M A\cong \M{B}\times \M{C}$,
	under which $F_A(A)$ corresponds to $(F_B(B\times C), F_C(B\times C))$ and $\1_A^{\times d}$ corresponds to~$(\1_B^{\times d} , \1_C^{\times d})$.
	Given that $\1_B$ is indecomposable and $F_B(B)$ is a ring factor of $\1_B^{\times d}$ in $\M B$, we know 
	$F_B(B)\cong \1_B^{\times t}$ for some $1\leq t\leq d$. The result now follows from~Theorem~\ref{th:galoisequivalences}.
\end{proof}

%\begin{corollary}\label{qgbbar}
%Let $A$ be a separable ring in $\K$ and $B$ an indecomposable $A$-algebra. If $B$ is quasi-Galois in~$\K$, then the ring $\overline{B}$ is quasi-Galois in $\M A$.
%\end{corollary}

%\begin{proof}
%This follows immediately from~Lemma~\ref{lem:qgextensions} and~Corollary~\ref{cor:galoisfactors}, seeing  how $\overline{B}$ is an indecomposable ring factor of the quasi-Galois ring $F_A(B)$ in $\M A$.
%\end{proof}

\section{Splitting Rings}
%For the remainder of the paper, $\K$ denotes an essentially small idempotent-complete symmetric monoidal category and all rings under consideration have finite degree.

\begin{definition}\label{def:splittingring}
	Let $A$ and $B$ be separable rings of finite degree in~$\K$. We say $B$ \emph{splits} $A$ if $F_B(A)\cong \1_B^{\times \deg(A)}$ in $\M B$. 
	We call an indecomposable ring $B$ a \emph{splitting ring} of $A$ if $B$ splits $A$ and any ring morphism $C\rightarrow B$, where $C$ is an indecomposable ring splitting~~$A$, is an isomorphism. 
\end{definition}

\begin{remark}\label{rem:BsplitsA}
	Let $A$ be a separable ring in $\K$ with $\deg(A)=d$. The ring $A^{[d]}$ in $\K$ splits $A$ by~Proposition~\ref{prop:degreeproperties}(a).
	Moreover, if $B$ is a separable indecomposable ring in $\K$, then $B$ splits $A$ if and only if $B$ is an $A^{[d]}$-algebra.
	This follows immediately from~Proposition~\ref{prop:An}.
\end{remark}

\begin{remark}\label{rem:Adsplits}
	Let $A$ be a separable ring in $\K$ with $\deg(A)=d$. The ring $A^{[d]}$ in $\K$ splits itself by~Proposition~\ref{prop:degreeproperties}(a),(b) and~Corollary~\ref{cor:1degree}:
	$$F_{A^{[d]}}(A^{[d]})\cong (F_{A^{[d]}}(A))^{[d]}\cong (\1_{A^{[d]}}^{\times d})^{[d]}\cong \1_{A^{[d]}}^{\times d!}.$$ 
\end{remark}

\begin{lemma}\label{lem:selfsplitfactors}
	Let $A$ be a separable ring in $\K$ that splits itself. If $A_1$ and $A_2$ are indecomposable ring factors of~$A$, then
	any ring morphism $A_1\rightarrow A_2$ is an isomorphism.
\end{lemma}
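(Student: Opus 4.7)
The plan is to translate the statement into the module category $\M{A_1}$, where the self-splitting hypothesis makes $\overline{A_2}$ extremely rigid, and then transport the resulting isomorphism back to $\K$ via the correspondence between $A_1$-algebras in $\K$ and rings in $\M{A_1}$ from Remark \ref{rem:algebracorrespondence}.

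First I would use self-splitting to control the extended ring. Since $A_1$ is a ring factor of $A$, the projection $\pi_1:A\to A_1$ makes $A_1$ into an $A$-algebra. Writing $d=\deg(A)$, the hypothesis gives $F_A(A)\cong\1_A^{\times d}$ in $\M A$, and by Remark \ref{rem:F_h} we obtain $F_{A_1}(A)\cong F_{\pi_1}(F_A(A))\cong\1_{A_1}^{\times d}$ in $\M{A_1}$. Similarly, since $A_2$ is a ring factor of $A$, $F_{A_1}(A_2)$ is a ring factor of $F_{A_1}(A)\cong\1_{A_1}^{\times d}$. Using the given morphism $f:A_1\to A_2$ to view $A_2$ as an $A_1$-algebra, Corollary \ref{cor:BringfactorF_A(B)} places $\overline{A_2}$ as a ring factor of $F_{A_1}(A_2)$, and therefore also of $\1_{A_1}^{\times d}$, in $\M{A_1}$.

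The key observation is that $\overline{A_2}$ is itself indecomposable in $\M{A_1}$: any ring decomposition $\overline{A_2}\cong R_1\times R_2$ in $\M{A_1}$ with both factors nonzero would translate, via Remark \ref{rem:algebracorrespondence}, into a decomposition of $A_2$ as a product of two nonzero $A_1$-algebras in $\K$, contradicting the indecomposability of $A_2$ as a ring in $\K$. Noting also that $\1_{A_1}$ is indecomposable in $\M{A_1}$ (its $A_1^e$-linear idempotents correspond, by Lemma \ref{lem:AAtoring}, to ring decompositions of $A_1$), Proposition \ref{prop:uniquedecomposition} applied iteratively to $\1_{A_1}^{\times d}=\1_{A_1}\times\cdots\times\1_{A_1}$ forces $\overline{A_2}\cong\1_{A_1}$ in $\M{A_1}$.

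Translating back through Remark \ref{rem:algebracorrespondence}, this isomorphism is precisely an iso $A_2\cong A_1$ of $A_1$-algebras in $\K$, i.e.\ a ring iso $\phi:A_2\to A_1$ satisfying $\phi f=1_{A_1}$. Hence $f$ admits an invertible left inverse and is itself an isomorphism. The main delicacy I expect is checking that the $A_1$-algebra/$\M{A_1}$-ring correspondence respects finite products, so that a nontrivial ring decomposition of $\overline{A_2}$ in $\M{A_1}$ genuinely contradicts the indecomposability of $A_2$ in $\K$; everything else then assembles mechanically from previously established results.
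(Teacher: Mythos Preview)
Your proof is correct, but it follows a different route from the paper's. Both arguments begin identically by observing $F_{A_1}(A)\cong\1_{A_1}^{\times d}$ and hence that $F_{A_1}(A_2)$ is a product of copies of~$\1_{A_1}$. From there the paper invokes Proposition~\ref{prop:An} to produce a ring morphism $g:A_2\to A_1$, then applies Corollary~\ref{cor:galoisfactors} to see that $A_1$ and $A_2$ are quasi-Galois, so that by Proposition~\ref{prop:actionofgamma}(b) the endomorphisms $gf$ and $fg$ are automorphisms. You instead stay inside $\M{A_1}$: using Corollary~\ref{cor:BringfactorF_A(B)} and the indecomposability of $\overline{A_2}$, Proposition~\ref{prop:uniquedecomposition} forces $\overline{A_2}\cong\1_{A_1}$, and the unit-preservation of that ring isomorphism gives directly an inverse to~$f$. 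Your argument is more self-contained, bypassing the quasi-Galois machinery entirely; the paper's approach, on the other hand, yields as a by-product the useful fact that the indecomposable factors of a self-splitting ring are quasi-Galois, which feeds into Corollary~\ref{cor:galoissplitring}. The delicacy you flag---that a ring decomposition of $\overline{A_2}$ in $\M{A_1}$ descends to one of $A_2$ in $\K$---is harmless: an $\overline{A_2}$-linear idempotent on $\overline{A_2}$ is in particular $A_2$-linear in $\K$ (the $A_1$-linearity being automatic through $f$), so indecomposability transfers.
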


\begin{proof}
	Let $A_1$ and $A_2$ be indecomposable ring factors of $A$ and suppose there is a ring morphism $f:A_1\rightarrow A_2$.
	We know $F_{A_1}(A)\cong \1_{A_1}^{\times \deg(A)}$ because $A$ splits itself.
	Meanwhile, $F_{A_1}(A_2)$ is a ring factor of~$F_{A_1}(A)$, so that 
	$F_{A_1}(A_2)\cong \1_{A_1}^{\times d}$ for some $d\geq 0$. In fact, $d=\deg(A_2)\geq 1$ by~Proposition~\ref{prop:degreeproperties}(c).
	Proposition~\ref{prop:An} shows there exists a ring morphism $g: A_2\rightarrow A_1$.
	Note that $A_1$ and $A_2$ are quasi-Galois by~Corollary~\ref{cor:galoisfactors}, so that the ring morphisms $gf:A_1 \rightarrow A_1$ and $fg:A_2\rightarrow A_2$ are isomorphisms by~Proposition~\ref{prop:actionofgamma}(b).
\end{proof}

\begin{definition}\label{def:nice}
	We say $\K$ is \emph{nice} if for every separable ring $A$ of finite degree in~$\K$, there are indecomposable rings $A_1,\dots, A_n$ in~$\K$  such that $A\cong A_1 \times \ldots \times A_n$. 
\end{definition}

\begin{example}
	Let $G$ be a group and $\kk$ a field.
	The categories $\mod G$, $\Db G$ and $\stab G$ (see Section~\ref{sec:rep}) are nice categories. More generally, $\K$ is nice if it satisfies Krull-Schmidt.
\end{example}

\begin{example}
	Let $X$ be a Noetherian scheme. Then  $\D(X)$, the derived category of perfect complexes over $X$, is nice (see~Lemma~\ref{prop:noethnice}).
\end{example}

\begin{lemma}\label{lem:ringmorphism}
	Suppose $\K$ is nice and let $A$, $B$ be separable rings of finite degree in~$\K$. 
	If $B$ is indecomposable and there exists a ring morphism $A\rightarrow B$ in~$\K$, then there exists a ring morphism $C\rightarrow B$ for some indecomposable ring factor $C$ of~$A$.
\end{lemma}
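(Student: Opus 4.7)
The plan is to use niceness to decompose $A$ into indecomposable ring factors and then argue that the $A$-algebra structure on $B$ must be concentrated in exactly one of those factors.

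First, by Definition~\ref{def:nice}, I would write $A\cong A_1\times\cdots\times A_n$ with each $A_i$ indecomposable (and separable, since separability descends to ring factors). Iterating Proposition~\ref{prop:uniquedecomposition} then yields a symmetric monoidal equivalence
\[
\M A \;\simeq\; \M{A_1}\times\cdots\times \M{A_n}
\]
sending $\1_A$ to $(\1_{A_1},\ldots,\1_{A_n})$. Viewing $B$ as an $A$-algebra via $f$, let $\overline{B}$ denote the corresponding ring in $\M A$ (Remark~\ref{rem:algebracorrespondence}). Under the equivalence, $\overline{B}$ decomposes as a product of rings $(B_1,\ldots,B_n)$ with each $B_j$ a ring in $\M{A_j}$.

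The crucial step will be showing $\overline{B}$ is indecomposable in $\M A$. For this I would invoke Proposition~\ref{prop:diagramequivalence} for the $A$-algebra $B$, which identifies $\Mod{\M A}{\overline{B}} \simeq \M B$ and hence matches $\overline{B}$-linear endomorphisms of $\overline{B}$ in $\M A$ with $B$-linear endomorphisms of $B$ in $\K$. By Remark~\ref{rem:endo}, the idempotents among these classify the ring decompositions of $\overline{B}$ (respectively~$B$). The assumed indecomposability of $B$ in $\K$ therefore forces indecomposability of $\overline{B}$ in $\M A$, so the product decomposition $\overline{B}\cong(B_1,\ldots,B_n)$ has exactly one nonzero factor, say $B_i$.

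Finally, $B_i$ is a nonzero ring in $\M{A_i}$, which by Remark~\ref{rem:algebracorrespondence} corresponds to a nonzero $A_i$-algebra in $\K$ whose underlying object is $B$---in other words, a ring morphism $A_i\rightarrow B$ in $\K$ out of the indecomposable ring factor $A_i$ of $A$, as required. The main delicate point is the indecomposability transfer in the previous paragraph, which leans squarely on the module-category equivalence of Proposition~\ref{prop:diagramequivalence}; everything else is bookkeeping around the product decomposition of $\M A$.
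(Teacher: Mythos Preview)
Your argument is correct, but it takes a different route from the paper's. You work in $\M A$: you transport the $A$-algebra $B$ to a ring $\overline{B}$ in $\M A\simeq\prod_j\M{A_j}$, and then use the equivalence $\Mod{\M A}{\overline{B}}\simeq\M B$ of Proposition~\ref{prop:diagramequivalence} to transfer indecomposability from $B$ to $\overline{B}$, forcing $\overline{B}$ to live in a single factor $\M{A_i}$. The paper instead works in $\M B$: a ring morphism $A\to B$ gives, via Proposition~\ref{prop:An}, that $\1_B$ is a ring factor of $F_B(A)\cong\prod_j F_B(A_j)$; since $\1_B$ is indecomposable, Proposition~\ref{prop:uniquedecomposition} places it inside some $F_B(A_i)$, and Proposition~\ref{prop:An} again yields a ring morphism $A_i\to B$. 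The paper's proof is shorter because it leverages Proposition~\ref{prop:An} (which packages the correspondence between ring morphisms $A\to B$ and $\1_B$-factors of $F_B(A)$), whereas your argument is more self-contained and does not need that result, at the cost of the extra indecomposability-transfer step. One minor point: the decomposition $\M A\simeq\prod_j\M{A_j}$ is used in the \emph{proof} of Proposition~\ref{prop:uniquedecomposition} rather than being its statement, so your citation there is slightly off, though the fact itself is standard.
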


\begin{proof}
	Since $\K$ is nice, we can write $A\cong A_1 \times\ldots \times A_n$ with $A_i$ indecomposable for~$1\leq i\leq n$.
	If there exists a ring morphism $A\rightarrow B$ in~$\K$, Proposition~\ref{prop:An} shows that $\1_B$ is a ring factor of $F_B(A)\cong F_B(A_1)\times \dots \times F_B(A_n)$. Since $\1_B$ is indecomposable, it is a ring factor of some $F_B(A_i)$ with $1\leq i\leq n$ by~Proposition~\ref{prop:uniquedecomposition}. 
	%The lemma now follows from Proposition~\ref{prop:An}.
\end{proof}

\begin{proposition}\label{prop:splittingring}
	Suppose $\K$ is nice and let $A$ be a separable ring of finite degree in~$\K$. An indecomposable ring $B$ in $\K$ is a splitting ring of $A$ if and only if $B$ is a ring factor of~$A^{[\deg(A)]}$. In particular, any separable ring in $\K$ has a splitting ring and at most finitely many.
\end{proposition}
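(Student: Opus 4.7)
The plan is to work with the ring $R := A^{[d]}$, where $d := \deg(A)$, and to show that its indecomposable ring factors (finitely many, by niceness) are \emph{exactly} the splitting rings of $A$. Two facts about $R$ will do most of the work: $R$ splits itself, $F_R(R) \cong \1_R^{\times d!}$ (Remark~\ref{rem:Adsplits}); and an indecomposable ring $B$ splits $A$ if and only if $B$ is an $R$-algebra (Remark~\ref{rem:BsplitsA}, via Proposition~\ref{prop:An}).

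First I would write $R \cong R_1 \times \cdots \times R_n$ with each $R_i$ indecomposable, and note that under the resulting decomposition $\M R \cong \M{R_1} \times \cdots \times \M{R_n}$ the isomorphism $F_R(A) \cong \1_R^{\times d}$ becomes $F_{R_i}(A) \cong \1_{R_i}^{\times d}$ componentwise. So each $R_i$ is an indecomposable ring that splits $A$; this will be used in both directions.

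For $(\Leftarrow)$, let $B = R_k$ be an indecomposable ring factor of $R$. It splits $A$ by the previous step. For universality, suppose $C$ is an indecomposable ring splitting $A$ and $h : C \to B$ is a ring morphism. I would lift back to $R$: by Remark~\ref{rem:BsplitsA} there is $g : R \to C$, and Lemma~\ref{lem:ringmorphism} produces a ring morphism $g_i : R_i \to C$ for some $i$. The composite $h g_i : R_i \to R_k$ is then a ring morphism between indecomposable factors of $R$, which splits itself, so Lemma~\ref{lem:selfsplitfactors} makes $h g_i$ an isomorphism. Setting $r := (h g_i)^{-1} h : C \to R_i$ gives $r g_i = 1_{R_i}$, so Theorem~\ref{th:splittingtheorem} decomposes $C \cong R_i \times C'$ as rings; indecomposability of $C$ kills $C'$, forcing $g_i$ to be an isomorphism, hence $h = (h g_i) g_i^{-1}$ is one as well. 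For $(\Rightarrow)$, any splitting ring $B$ is an $R$-algebra, so Lemma~\ref{lem:ringmorphism} yields a ring morphism $R_i \to B$ for some indecomposable factor $R_i$; since $R_i$ itself splits $A$, the universal property of $B$ makes this morphism an isomorphism, identifying $B$ with a ring factor of $R$.

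The step I expect to require most care is universality in $(\Leftarrow)$: one cannot retract $h$ directly, and must instead detour through the factors of $R$ and then combine Theorem~\ref{th:splittingtheorem} with indecomposability of $C$ to collapse the retraction of $g_i$. The final finiteness claim is then immediate, since the splitting rings of $A$ are (up to isomorphism) exactly the finitely many $R_i$, with at least one existing because $R \neq 0$ when $d \geq 1$.
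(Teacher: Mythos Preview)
Your proof is correct and follows essentially the same route as the paper's: both directions hinge on Remark~\ref{rem:BsplitsA} (splitting $A$ $\Leftrightarrow$ being an $A^{[d]}$-algebra), Lemma~\ref{lem:ringmorphism} (factoring through an indecomposable piece of $A^{[d]}$), and Lemma~\ref{lem:selfsplitfactors} applied via Remark~\ref{rem:Adsplits}. The only cosmetic difference is in $(\Leftarrow)$: the paper observes that once $B'\to C\to B$ is an isomorphism, the map $C\to B$ acquires a section, so $B$ is directly a ring factor of the indecomposable $C$; you instead retract $g_i$ to exhibit $R_i$ as a factor of $C$ and then deduce that $h$ is an isomorphism --- one extra step, but the same idea.
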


\begin{proof}
	Let $d:=\deg(A)$ and suppose $B$ is a splitting ring of~$A$.
	By Remark~\ref{rem:BsplitsA}, $B$ is an $A^{[d]}$-algebra. Hence,
	there exists a ring morphism $C\rightarrow B$ 
	for some indecomposable ring factor $C$ of~$A^{[d]}$ by~Lemma~\ref{lem:ringmorphism}.
	Since $C$ splits~$A$, the ring morphism $C\rightarrow B$ is an isomorphism.
	Conversely, suppose $B$ is a ring factor of $A^{[d]}$, so $B$ splits~$A$.
	Let $C$ be an indecomposable separable ring splitting $A$ and suppose there is a ring morphism $C\rightarrow B$.  As before, $C$ is an $A^{[d]}$-algebra and
	there exists a ring morphism $B'\rightarrow C$ 
	for some indecomposable ring factor $B'$ of~$A^{[d]}$.
	The composition $B'\rightarrow C\rightarrow B$ is an isomorphism  
	by Remark~\ref{rem:Adsplits} and
	Lemma~\ref{lem:selfsplitfactors}.
	In other words, $B$ is a ring factor of the indecomposable ring $C$, so that~$C\cong B$.
\end{proof}

\begin{corollary}\label{cor:galoissplitring} 
	Suppose $\K$ is nice and $B$ is a separable indecomposable ring of finite degree in~$\K$. Then $B$ is quasi-Galois in $\K$ if and only if there exists a nonzero separable ring $A$ of finite degree in $\K$ such that $B$ is a splitting ring of~$A$.
\end{corollary}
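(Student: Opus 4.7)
The plan is to handle the two implications separately, taking $A := B$ for the forward direction and leveraging the characterization of splitting rings from Proposition~\ref{prop:splittingring} for the backward direction. In both cases the key computational input is the combination of Proposition~\ref{prop:degreeproperties}(a)--(b) and Corollary~\ref{cor:1degree}, which lets us pin down $F_B(B)$ and $B^{[\deg B]}$ as products of $\1_B$'s.

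For the forward direction, I would assume $B$ is quasi-Galois of degree $d$ and take $A := B$. Then $F_B(B) \cong \1_B^{\times d}$ by Theorem~\ref{th:galoisequivalences}, so $B$ splits itself; the real content is to verify the universal property. The key computation is that $B^{[d]} \cong B^{\times (d-1)!}$ as rings in $\K$: one iterates Proposition~\ref{prop:degreeproperties}(a) to get $B^{[2]} \cong \1_B^{\times (d-1)}$ in $\M B$, and then applies Corollary~\ref{cor:1degree} inside $\M B$. In particular every indecomposable ring factor of $B^{[d]}$ is isomorphic to $B$. Given an indecomposable $C$ splitting $B$ together with a ring morphism $C \to B$, Remark~\ref{rem:BsplitsA} supplies a ring morphism $B^{[d]} \to C$, and Lemma~\ref{lem:ringmorphism} (using niceness) upgrades this to a ring morphism $B \to C$ from one of the indecomposable factors of $B^{[d]}$. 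The composite $B \to C \to B$ is a ring endomorphism of the quasi-Galois indecomposable ring $B$, hence an automorphism by Proposition~\ref{prop:actionofgamma}(b). This exhibits $C \to B$ as a split epimorphism of rings, and Theorem~\ref{th:splittingtheorem} together with the indecomposability of $C$ then forces $C \to B$ to be an isomorphism.

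For the backward direction, I would start from a splitting ring $B$ of some separable $A$ of finite degree $d$. Proposition~\ref{prop:splittingring} identifies $B$ as a ring factor of $A^{[d]}$, so $A^{[d]} \cong B \times B'$ for some ring $B'$. Applying $F_B$ and combining $F_B(A) \cong \1_B^{\times d}$ with Proposition~\ref{prop:degreeproperties}(b) and Corollary~\ref{cor:1degree} yields
\[
F_B(B) \times F_B(B') \;\cong\; F_B(A^{[d]}) \;\cong\; F_B(A)^{[d]} \;\cong\; \1_B^{\times d!}.
\]
Since $B$ is indecomposable, $\1_B$ is indecomposable in $\M B$, so Proposition~\ref{prop:uniquedecomposition} forces $F_B(B) \cong \1_B^{\times t}$ for some $t \geq 1$ (nonvanishing is automatic since $B$ is a direct summand of $B \T B$ via the usual splitting of $\mu$). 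Theorem~\ref{th:galoisequivalences} then concludes that $B$ is quasi-Galois.

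The main friction lies in the forward direction, where one has to rule out a strictly ``smaller'' indecomposable splitting ring of $B$ mapping to $B$. The explicit shape $B^{[d]} \cong B^{\times (d-1)!}$ together with the rigidity of quasi-Galois indecomposables given by Proposition~\ref{prop:actionofgamma}(b) does all the work there, so beyond this bookkeeping I do not anticipate a real obstacle.
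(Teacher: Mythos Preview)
Your argument is correct and follows essentially the same line as the paper's: both directions take $A=B$ for the forward implication and compute $B^{[d]}\cong B^{\times(d-1)!}$ via $B^{[2]}\cong \1_B^{\times(d-1)}$ and Corollary~\ref{cor:1degree}, and both use $F_B(A^{[d]})\cong \1_B^{\times d!}$ together with Theorem~\ref{th:galoisequivalences} for the converse.

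The only notable difference is in the forward direction: the paper simply cites Proposition~\ref{prop:splittingring} (proved immediately before) to conclude from ``$B$ is an indecomposable ring factor of $B^{[d]}$'' that $B$ is a splitting ring of $B$. You instead verify the minimality condition in Definition~\ref{def:splittingring} by hand, producing a ring morphism $B\to C$ from a factor of $B^{[d]}$ and using Proposition~\ref{prop:actionofgamma}(b) plus Theorem~\ref{th:splittingtheorem} to force $C\to B$ to be an isomorphism. This is correct but redundant: it re-derives exactly the content of the backward implication of Proposition~\ref{prop:splittingring}. Citing that proposition directly would shorten your forward direction to two lines, matching the paper.
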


\begin{proof}
	Suppose $B$ is indecomposable and quasi-Galois of degree $t$, so $B^{[2]}\cong \1_B^{\times (t-1)}$ as $B$-algebras. Then, $B$ is a splitting ring for $B$ because $B$ is a ring factor of $B^{[t]}$:
	$$B^{[t]}\cong (B^{[2]})^{[t-1]}\cong(\1_B^{\times (t-1)})^{[t-1]}\cong B^{\times (t-1)!}.$$
	Now suppose $B$ is a splitting ring for some $A$ in~$\K$, say with $\deg(A)=d>0$.
	Seeing how $F_B(B)$ is a ring factor of 
	$$F_B(A^{[d]})\cong  F_B(A)^{[d]}\cong (\1_B^{\times d})^{[d]}= \1_B^{\times d!},$$ 
	we know $F_B(B)\cong \1_B^{\times t}$ for some $t>0$.
	By~Theorem~\ref{th:galoisequivalences}, $B$ is quasi-Galois.
\end{proof}

\section{Tensor Triangular Geometry}

\begin{definition}\label{def:tt}
	A \emph{tt-category} $\K$ is an essentially small, idempotent-complete tensor-triangulated category. In particular, $\K$ comes equipped with a symmetric monoidal structure $(\T,\1)$ such that $x\T - :\K\rightarrow \K$ is exact for all objects $x$ in~$\K$. A \emph{tt-functor} $\K\rightarrow \cat L$ is an exact monoidal functor. 
\end{definition}

Throughout the rest of this paper, $(\K,\T,\1)$ will denote a tt-category.

\begin{remark}
	Balmer proved in~\cite{bseparability} that extensions along separable ring objects preserve the triangulation: $(\M A,\T_A,\1_A)$ is a tt-category, extension-of-scalars $F_A$ becomes a tt-functor and $U_A$ is exact.  
\end{remark}

%\begin{example}
%	Let $R$ be a commutative ring and $A$ a commutative finite étale (flat and separable) $R$-algebra. Then $\K:=\D(R)$, the homotopy category of bounded complexes of finitely generated projective $R$-modules, is a tt-category. Since $A$ is $R$-flat, the object~$A=A[0]$ in $\K$ keeps its ring structure. Then $A$ is a ring in $\K$ and the category of $A$-modules~$\M A$ is equivalent to $\D(A)$~\cite[Th.6.5]{bseparability}.
%\end{example}

\begin{definition}
	We briefly recall some tt-geometry and refer the reader to~\cite{bspectrum} for precise statements and motivation.
	The \emph{spectrum} $\Spc(\K)$ of a tt-category $\K$ is the set of all prime thick $\T$-ideals $\p\subsetneq \K$. The \emph{support} of an object $x$ in $\K$ is $\supp(x)=\{\p\in\Spc(\K)\mid x\notin \p\}\subset\Spc(\K)$. The complements $\mathcal{U}(x):=\Spc(\K)-\supp(x)$ of these supports form an open basis for the \emph{Zariski topology} on $\Spc(\K)$.
\end{definition}

\begin{remark}
	The spectrum is functorial. In particular, every tt-functor $F:\K \rightarrow \cat L$ induces a continuous map 
	$$\Spc(F):\Spc(\cat L) \longrightarrow \Spc(\K).$$
	Moreover, for all $x\in\K$, we have
	$$(\Spc F)^{-1} (\supp_{\K}(x))=\supp_{\cat L} (F(x))\subset\Spc \cat L.$$
\end{remark}

\begin{example}\label{ex:specetale}
	Let $R$ be a commutative ring. Then $\D(R)$, with left derived tensor product, is a tt-category and $\Spc(\D(R))$ recovers $\Spec(R)$.
\end{example}

Let $A$ be a separable ring in~$\K$.
We will consider the continuous map
$$f_A:=\Spc(F_A):\Spc(\M A)\longrightarrow \Spc(\K)$$
induced by the extension-of-scalars $F_A:\K\rightarrow \M A$. We collect some of its properties here.

\begin{theorem}\label{th:f_Acoequalizer}\emph{(\cite[Th.3.14]{bquillen}).}
	Let $A$ be a separable ring of finite degree in~$\K$. Then
	\begin{equation}\label{eq:f_Acoequalizer}
	\begin{tikzcd}
	\Spc(\M{(A\T A)}) \arrow[yshift=0.7ex]{r}{f_1}
	\arrow[swap, yshift=-0.7ex]{r}{f_2}&
	\Spc(\M A)
	\arrow[two heads]{rr}{f_A}
	&& \supp_{\K}(A)
	\end{tikzcd}
	\end{equation}
	is a coequaliser, where $f_1, f_2$ are the maps induced by extension-of-scalars along the morphisms $1\T \eta$ and $\eta\T 1:A\rightarrow A\T A$ respectively. In particular, the image of $f_A$ is $\supp_{\K}(A)\subset \Spc(\K)$. 
\end{theorem}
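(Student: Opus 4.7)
The plan is to verify the three ingredients of a topological coequalizer: (i) the map $f_A$ coequalizes $(f_1,f_2)$, (ii) its image is exactly $\supp_{\K}(A)$ and the fibers are precisely the equivalence relation generated by $(f_1,f_2)$, and (iii) the topology on $\supp_{\K}(A)$ is the quotient topology. Ingredient (i) is essentially formal: the two composites $F_1 F_A, F_2 F_A : \K \to \M{(A\T A)}$ both coincide with the tt-functor $F_{A\T A}$, since $F_{A\T A}$ factors through $F_A$ via either of the ring morphisms $1\T\eta$ or $\eta\T 1$, giving $f_A f_1 = f_{A\T A} = f_A f_2$.

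For (ii), the inclusion $\operatorname{im}(f_A)\subseteq \supp_{\K}(A)$ follows from the observation that $\mathbb{1}_A$ is a retract of $F_A(A)=A\T A$ via $\mu$ and $1\T\eta$: for any $\q\in\Spc(\M A)$, if $A\in f_A(\q)=F_A^{-1}(\q)$ then $F_A(A)\in\q$, forcing $\mathbb{1}_A\in\q$, a contradiction. For the reverse inclusion, I would localize at $\p\in\supp_{\K}(A)$, reducing to the local tt-category $\K_{\p}$ where the image $A_{\p}$ of $A$ is a nonzero separable ring of finite degree; any prime in the nonempty spectrum $\Spc(\M{A_{\p}})$ pulls back through the localization functors to produce a prime $\q\in\Spc(\M A)$ with $f_A(\q)=\p$. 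The finite-degree hypothesis, via Proposition~\ref{prop:degreeproperties}, ensures all these categories remain nontrivial after localization.

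For the fiber description, given $\q_1,\q_2\in\Spc(\M A)$ with $f_A(\q_1)=f_A(\q_2)=\p$, I would produce $\mathfrak{r}\in\Spc(\M{(A\T A)})$ with $f_i(\mathfrak{r})=\q_i$. After localizing at $\p$, the key is that $A_{\p}\T_{\K_{\p}} A_{\p}$ admits a product decomposition (compare Proposition~\ref{prop:An} and the splitting tower construction) whose factors correspond to pairs of primes over $\p$. Picking the factor matching the pair $(\q_1,\q_2)$ and taking a prime in its spectrum gives the sought $\mathfrak{r}$. For (iii), I would show that $f_A$ is a closed map using the finite-degree hypothesis: the image of the closed set $\supp_{\M A}(x)$ can be identified with $\supp_{\K}(x)\cap\supp_{\K}(A)$ using the fact that $x$ is a retract of $F_AU_A(x)$ for any $A$-module and that $U_A$ preserves supports in the suitable sense. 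Combined with the surjectivity and fiber identification, this upgrades the set-theoretic quotient to a topological one.

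The main obstacle is the joint lifting step in (ii): showing that any two primes of $\M A$ over the same $\p$ fit in a single prime of $\M{(A\T A)}$. This is really the descent statement for the cover $F_A:\K\to\M A$, and the finite-degree hypothesis is indispensable: it is what forces $A_{\p}\T_{\K_{\p}}A_{\p}$ to split off enough $\mathbb{1}_{A_{\p}}$-factors so that the desired prime $\mathfrak{r}$ exists, rather than vanishing into the ``diagonal'' component alone.
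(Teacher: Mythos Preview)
The paper does not give its own proof of this theorem: it is quoted verbatim from Balmer's preprint \cite[Th.~3.14]{bquillen} and used as a black box. So there is no in-paper argument to compare your proposal against.

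That said, your outline follows the shape of Balmer's original argument fairly closely: the formal coequalizing in (i), surjectivity onto $\supp_{\K}(A)$ via localization, and the ``joint lifting'' of two primes over the same $\p$ to a prime of $\M{(A\T A)}$ are indeed the main steps, and you correctly flag the last of these as the crux. One genuine gap to watch is your step (iii): showing that $f_A$ sends each basic closed set $\supp_{\M A}(x)$ to a closed set is \emph{not} sufficient to conclude that $f_A$ is a closed map, since closedness is not detected on a basis. Balmer's actual argument for the quotient topology is more delicate and uses that the fibres of $f_A$ are finite (bounded by $\deg(A)$) together with a specialization/going-up analysis; the closedness of $f_A$ is established separately, not by the retract trick you sketch. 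Your description of the fiber step is also too vague as written: the product decomposition of $A_{\p}\T A_{\p}$ does not literally have ``factors corresponding to pairs of primes over $\p$'', and producing the prime $\mathfrak r$ requires a more careful inductive or going-up argument using the splitting tower.
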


\begin{definition}\label{def:local}
	We call a tt-category $\K$ \emph{local} if $x\T y=0$ implies that $x$ or $y$ is $\T$-nilpotent for all $x,y\in\K$.
	The \emph{local category $\K_{\p} $ at the prime} $\p\in \Spc(\K)$ is the idempotent completion of the Verdier quotient $\K\diagup \p$. We write $q_{\p}$ for the canonical tt-functor $\K\twoheadrightarrow \K\diagup \p \hookrightarrow \K_{\p}$.  
\end{definition}

\begin{proposition}\label{prop:degreep}\emph{(\cite[Th.3.8]{bdegree}).}
	Suppose $A$ is a separable ring in~$\K$. If the ring $q_{\p}(A)$ has finite degree in $\K_{\p}$ for every $\p\in \Spc(\K)$, then $A$ has finite degree and 
	$$\deg_{\K}(A)=\max_{\p\in \Spc(\K)}\deg_{\K_{\p}}(q_{\p}(A)).$$
\end{proposition}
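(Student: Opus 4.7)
The plan is a local-to-global argument using that the localization tt-functor $q_{\p}:\K\rightarrow \K_{\p}$ is monoidal. By Proposition~\ref{prop:degreeproperties}(b), $q_{\p}(A^{[n]})\cong q_{\p}(A)^{[n]}$ for every $n\geq 0$ and every $\p\in\Spc(\K)$. Setting $d_{\p}:=\deg_{\K_{\p}}(q_{\p}(A))$, the hypothesis gives $q_{\p}(A^{[d_{\p}+1]})=0$, equivalently $A^{[d_{\p}+1]}\in\p$, i.e., $\p\in\mathcal{U}(A^{[d_{\p}+1]})$.

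Next I would check that the quasi-compact opens $U_n:=\mathcal{U}(A^{[n+1]})$ form an ascending chain covering $\Spc(\K)$. For the ascent, $A^{[n+2]}$ is an $A^{[n+1]}$-algebra, so separability of $A^{[n+1]}$ makes $A^{[n+2]}$ a retract of $A^{[n+1]}\T A^{[n+2]}$ in $\K$ (the counit of the adjunction $F_{A^{[n+1]}}\dashv U_{A^{[n+1]}}$ splits); thus $A^{[n+1]}\in\p$ forces $A^{[n+2]}\in\p$, so $U_n\subseteq U_{n+1}$. The previous paragraph yields the cover, so quasi-compactness of $\Spc(\K)$ produces an $N$ with $U_N=\Spc(\K)$, i.e., $\supp(A^{[N+1]})=\emptyset$.

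I would then conclude $A^{[N+1]}=0$. Any nonzero separable ring $R$ in $\K$, via $\mu\sigma=1_R$ applied iteratively, is a retract of $R^{\T n}$ for every $n\geq 1$, so $R^{\T n}\neq 0$ and $R$ is not $\T$-nilpotent. Balmer's identification in~\cite{bspectrum} of empty support with $\T$-nilpotence then forces $A^{[N+1]}=0$, so $\deg_{\K}(A)\leq N<\infty$. For the equality, Proposition~\ref{prop:degreeproperties}(b) gives $d_{\p}\leq \deg_{\K}(A)$; conversely, with $e:=\deg_{\K}(A)$ the separable ring $A^{[e]}\neq 0$ is not $\T$-nilpotent by the same retract argument, so $\supp(A^{[e]})\neq\emptyset$, and any $\p$ there satisfies $q_{\p}(A)^{[e]}\neq 0$, hence $d_{\p}\geq e$. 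The supremum is therefore attained.

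The main obstacle is the step upgrading $\supp(A^{[N+1]})=\emptyset$ to $A^{[N+1]}=0$; the rest is essentially bookkeeping. Separability enters twice in a crucial way here: first to force $A^{[n+1]}\in\p\Rightarrow A^{[n+2]}\in\p$, which controls the ascent of the cover, and second to rule out nonzero $\T$-nilpotent rings via the retract argument combined with Balmer's nilpotence characterization.
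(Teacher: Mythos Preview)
The paper does not actually supply a proof of this proposition: it is quoted verbatim from \cite[Th.~3.8]{bdegree} with no argument given, so there is nothing in the paper itself to compare against. That said, your argument is correct and is essentially the standard local-to-global proof one would expect (and is in the spirit of the cited reference): identify $q_{\p}(A^{[n]})\cong q_{\p}(A)^{[n]}$ via Proposition~\ref{prop:degreeproperties}(b), cover the quasi-compact space $\Spc(\K)$ by the ascending opens $\mathcal{U}(A^{[n+1]})$, extract a finite subcover, and then pass from $\supp(A^{[N+1]})=\emptyset$ to $A^{[N+1]}=0$ using that ring objects are never $\T$-nilpotent.

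One small comment: in the step showing that a nonzero ring $R$ is not $\T$-nilpotent, you invoke separability via $\mu\sigma=1_R$, but the unit axiom alone already does the job, since $\mu(1_R\T\eta)=1_R$ exhibits $R$ as a retract of $R\T R$. So separability is not really needed there; it is genuinely used only in the ascent step (to make $A^{[n+2]}$ a summand of $A^{[n+1]}\T A^{[n+2]}$, which in fact also follows just from $A^{[n+2]}$ being an $A^{[n+1]}$-algebra via the unit). This does not affect correctness, only the attribution of which hypotheses are doing the work.
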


\begin{proposition}\label{prop:local}\emph{(\cite[Cor.3.12]{bdegree}).}
	Let $\K$ be a local tt-category and suppose $A, B$ are separable rings of finite degree in~$\K$. Then $\deg(A\times B)=\deg(A)+\deg(B)$.
\end{proposition}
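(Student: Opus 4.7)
The inequality $\deg(A\times B) \leq \deg(A) + \deg(B)$ is already in Lemma~\ref{cor:1factor}(a), so the plan is to prove the reverse inequality using that $\K$ is local. After handling the trivial cases $A=0$ or $B=0$, I would set $d := \deg(A)$, $e := \deg(B)$, and $C := A^{[d]}$, which is nonzero by choice of $d$. Proposition~\ref{prop:degreeproperties}(a) with $n=d$, together with $A^{[d+1]}=0$, gives $F_C(A) \cong \1_C^{\times d}$ in $\M C$, and hence
\[ F_C(A \times B) \cong \1_C^{\times d} \times F_C(B).\]
Applying Lemma~\ref{cor:1factor}(b) inside $\M C$ yields $\deg_{\M C}(F_C(A\times B)) = d + \deg_{\M C}(F_C(B))$, and combining with the estimate $\deg_{\M C}(F_C(A\times B)) \leq \deg_\K(A\times B)$ from Proposition~\ref{prop:degreeproperties}(b), the problem reduces to showing $\deg_{\M C}(F_C(B)) \geq e$.

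This last inequality is the only substantive step, and it is exactly where locality enters. By Proposition~\ref{prop:degreeproperties}(b), $F_C(B)^{[e]} \cong F_C(B^{[e]})$, whose underlying object in $\K$ is $C \T B^{[e]}$; as the forgetful functor $\M C \to \K$ reflects the zero object, $F_C(B)^{[e]} = 0$ in $\M C$ iff $C \T B^{[e]} = 0$ in $\K$. Both $C = A^{[d]}$ and $B^{[e]}$ are nonzero rings in $\K$ (the iterated splitting tower produces a ring structure in $\K$ at each stage via Remark~\ref{rem:algebracorrespondence}), and a nonzero ring $R$ in $\K$ cannot be $\T$-nilpotent, since $R^{\T n}=0$ would force $1_R=0$ via the composite $R \to R^{\T n} \to R$ built from $n-1$ units and iterated multiplication. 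Hence neither $C$ nor $B^{[e]}$ is $\T$-nilpotent, and the local hypothesis forces $C \T B^{[e]} \neq 0$. Therefore $F_C(B)^{[e]} \neq 0$, so $\deg_{\M C}(F_C(B)) \geq e$, which completes the argument.

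The main point to be careful about, and what the local hypothesis is really buying, is this last non-vanishing step: in general $F_C$ can strictly drop the degree of $B$ (which is what allows the inequality in Lemma~\ref{cor:1factor}(a) to be strict), but locality guarantees that extension-of-scalars along a nonzero separable ring never annihilates the top of the splitting tower of $B$, so degrees are preserved and the sum $d+e$ is achieved.
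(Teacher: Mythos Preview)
The paper does not actually prove this proposition: it is quoted verbatim from \cite[Cor.~3.12]{bdegree} and stated without proof, so there is no ``paper's own proof'' to compare against. That said, your argument is correct and self-contained within the tools of the present paper. The only step that deserves a second look is the claim that a nonzero ring $R$ in $\K$ cannot be $\T$-nilpotent; your justification via the retraction $R \to R^{\T n} \to R$ built from $\eta$ and $\mu$ is fine, and together with the definition of local (Definition~\ref{def:local}) this indeed forces $C \T B^{[e]} \neq 0$. Everything else---the use of Lemma~\ref{cor:1factor}(a),(b), Proposition~\ref{prop:degreeproperties}(a),(b), and the observation that $U_C$ reflects zero---is routine. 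Your proof would serve perfectly well as a replacement for the bare citation.
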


\begin{lemma}\label{lem:degreeproperties}\emph{(\cite[Th.3.7]{bdegree}).}
	Let $A$ and $B$ be separable rings in~$\K$ and suppose $\supp(A)\subseteq \supp(B)$. Then $\deg_{\M B}(F_B(A))= \deg_{\K} (A)$.
\end{lemma}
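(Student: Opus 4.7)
The plan is to combine Proposition~\ref{prop:degreeproperties}(b) with the pointwise degree formula of Proposition~\ref{prop:degreep}, reducing the nontrivial inequality $\deg_{\M B}(F_B(A))\geq \deg_{\K}(A)$ to a single nonvanishing check inside a local tt-category. The easy direction $\deg_{\M B}(F_B(A))\leq \deg_{\K}(A)$ is immediate from Proposition~\ref{prop:degreeproperties}(b) applied to the tt-functor $F_B$, so I will focus on the lower bound.

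Write $d:=\deg_{\K}(A)$; I may assume $d\geq 1$, otherwise $A=0$ and both sides vanish. By definition of degree, the lower bound $\deg_{\M B}(F_B(A))\geq d$ is equivalent to $F_B(A)^{[d]}\neq 0$ in $\M B$. Proposition~\ref{prop:degreeproperties}(b) gives $F_B(A)^{[d]}\cong F_B(A^{[d]})$, whose underlying $\K$-object is $B\T A^{[d]}$; since the forgetful $U_B$ detects zero, it suffices to verify $B\T A^{[d]}\neq 0$ in $\K$. To detect nonvanishing I pass to a local category: by Proposition~\ref{prop:degreep}, pick $\p\in\Spc(\K)$ realising the maximum, so that $\deg_{\K_{\p}}(q_{\p}(A))=d$. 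Then $q_{\p}(A)\neq 0$, hence $\p\in\supp(A)\subseteq \supp(B)$ by hypothesis, and consequently $q_{\p}(B)\neq 0$. Applying the tt-functor $q_{\p}$ yields $q_{\p}(B\T A^{[d]})\cong q_{\p}(B)\T q_{\p}(A)^{[d]}$ in $\K_{\p}$, whose second factor is nonzero by the choice of $\p$.

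The remaining and main obstacle is to argue this tensor product does not vanish. Both factors are nonzero rings in $\K_{\p}$, and any ring $R$ is a retract of $R\T R$ via the unit identity $\mu(\eta\T 1_R)=1_R$, so iterating shows no nonzero ring is $\T$-nilpotent. The locality of $\K_{\p}$ then forbids the tensor product of two non-$\T$-nilpotent objects from being zero, completing the argument. This is where the hypothesis $\supp(A)\subseteq\supp(B)$ genuinely enters, through its consequence that one can find a prime above which both rings are simultaneously nonzero.
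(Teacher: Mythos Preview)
The paper does not supply its own proof of this lemma; it is simply quoted from \cite[Th.3.7]{bdegree} with no argument given. Your proof is correct and entirely self-contained within the tools this paper has already set up.

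Two small remarks on streamlining. First, the appeal to Proposition~\ref{prop:degreep} to locate the prime $\p$ is a slight detour: since $A^{[d]}\neq 0$, its support is nonempty, and any $\p\in\supp_{\K}(A^{[d]})$ does the job, with $\supp(A^{[d]})\subseteq\supp(A)\subseteq\supp(B)$ giving $q_{\p}(B)\neq 0$ at once. Second, the passage through local categories and non-nilpotence of rings, while valid, can be replaced by a one-line support computation: to see $F_B(A^{[d]})\neq 0$ in $\M B$ it suffices that
\[
\supp_{\M B}\bigl(F_B(A^{[d]})\bigr)=f_B^{-1}\bigl(\supp_{\K}(A^{[d]})\bigr)\neq\emptyset,
\]
which holds because $\emptyset\neq\supp_{\K}(A^{[d]})\subseteq\supp_{\K}(B)=\im(f_B)$ by Theorem~\ref{th:f_Acoequalizer}. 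Your route via Definition~\ref{def:local} and the retraction $\mu(\eta\T 1_R)=1_R$ is a nice independent way to see the same nonvanishing, and makes transparent exactly where the support hypothesis bites.
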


\begin{proposition}\label{prop:noethnice}
	Suppose the spectrum $\Spc(\K)$ of $\K$ is Noetherian. Then $\K$ is nice. That is, any separable ring $A$ of finite degree in $\K$ has a decomposition $A\cong A_1 \times \ldots \times A_n$ where $A_1,\dots, A_n$ are indecomposable rings in~$\K$.
\end{proposition}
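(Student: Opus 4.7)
The plan is to proceed by Noetherian induction on closed subsets of $\Spc(\K)$. Given a separable ring $A$ of finite degree $d$, I assume the conclusion holds for every separable ring of finite degree whose support is strictly contained in $\supp(A)$; the base case $\supp(A)=\emptyset$ forces $A=0$ and is trivial.

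The first key step is to exhibit an indecomposable ring factor of $A$. Suppose for contradiction that $A$ has no such factor. Using the descending chain condition on closed subsets of the Noetherian space $\Spc(\K)$, choose a ring factor $B$ of $A$ whose support $Z_0$ is minimal. By hypothesis $B$ is not indecomposable, so write $B\cong B'\times B''$ with both factors nonzero. Both $B'$ and $B''$ are themselves ring factors of $A$ with supports contained in $Z_0$, so minimality forces $\supp(B')=\supp(B'')=Z_0$. Iterating this decomposition produces, for every $n\geq 1$, a decomposition of $B$ into $n$ nonzero ring factors with common support $Z_0$. Picking any $\p\in Z_0$ and applying $q_{\p}$, Proposition~\ref{prop:local} (additivity of degree in the local tt-category $\K_{\p}$) yields $\deg_{\K_{\p}}(q_{\p}(B))\geq n$ for every $n$, contradicting the bound $\deg_{\K_{\p}}(q_{\p}(B))\leq\deg_\K(A)<\infty$ from Proposition~\ref{prop:degreep}. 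Hence a minimal-support ring factor of $A$ is automatically indecomposable.

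Having produced an indecomposable ring factor $A_1$ of $A$, write $A\cong A_1\times A'$. If $\supp(A')\subsetneq\supp(A)$, the Noetherian induction hypothesis applied to $A'$ gives a finite indecomposable decomposition and we are done. Otherwise $\supp(A')=\supp(A)$, and one iterates on $A'$ to obtain successive indecomposable factors $A_1,A_2,\ldots$ with residuals $A^{(k)}$ defined by $A^{(0)}:=A$ and $A^{(k-1)}\cong A_k\times A^{(k)}$. At any fixed $\p\in\supp(A)$, additivity of degree in $\K_{\p}$ shows that $\deg_{\K_{\p}}(q_{\p}(A^{(k)}))$ strictly decreases each time $\p\in\supp(A_k)$, so at most $d$ of the extracted indecomposables contain $\p$ in their support.

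The main obstacle is the \emph{global} termination of this iteration in the case where the supports $\supp(A^{(k)})$ never drop below $\supp(A)$: the pointwise multiplicity bound does not, on its own, preclude an infinite family of indecomposable factors whose supports are distributed across $\supp(A)$, since in a Noetherian space a covering of bounded pointwise multiplicity can still be infinite (as $\Spec\Z$ with its closed points illustrates). To close this gap I would refine the first step, selecting the indecomposable factor $A_1$ so that its removal strictly drops the value of the degree function $\p\mapsto\deg_{\K_{\p}}(q_{\p}(-))$ at some fixed generic point of an irreducible component of $\supp(A)$. Since $\Spc(\K)$ Noetherian implies $\supp(A)$ has only finitely many irreducible components, say $r$ of them, and the degree at each such generic point is at most $d$, after at most $d\cdot r$ iterations the residual ring $A^{(k)}$ must have support strictly smaller than $\supp(A)$, at which stage the induction hypothesis completes the proof.
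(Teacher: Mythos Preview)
Your first step---producing an indecomposable factor via a minimal-support argument---is correct and is a pleasant use of the Noetherian hypothesis together with local additivity of degree. The difficulty lies exactly where you place it, but the proposed refinement does not close the gap. You want an indecomposable factor whose support contains a prescribed generic point $\eta$ of $\supp(A)$, yet the minimal-support trick does not deliver this: if $B$ is chosen with minimal support among ring factors of $A$ containing $\eta$ and $B\cong B'\times B''$ with $\eta\in\supp(B')$, minimality forces $\supp(B')=\supp(B)$, but once $\eta\notin\supp(B'')$ (which must eventually occur, as $\deg_{\K_\eta} q_\eta$ is bounded) nothing controls $\supp(B'')$. Iterating then peels off an infinite sequence of nonzero complementary factors whose supports avoid $\eta$ and may wander over $\supp(B)$, while both $\supp$ and $\deg_{\K_\eta} q_\eta$ of the running residual stay constant. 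So the existence of an indecomposable factor through a given point is not established, and without it the generic-point bookkeeping cannot begin.

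The paper avoids the whole detour by tracking a finer invariant from the outset. For any chain of proper ring factors $A=B_0\supsetneq B_1\supsetneq\cdots$ one has $\deg_{\K_\p} q_\p(B_{n+1})\le\deg_{\K_\p} q_\p(B_n)$ for every $\p$, hence $\supp(B_{n+1}^{[i]})\subseteq\supp(B_n^{[i]})$ for each fixed $i$. Only the finitely many indices $0\le i\le\deg_\K(A)$ are relevant, so the Noetherian hypothesis furnishes a single $k$ beyond which all of these chains are constant; then $\deg_{\K_\p} q_\p(B_k)=\deg_{\K_\p} q_\p(B_{k+1})$ for every $\p$, forcing the complementary factor to be zero. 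This tuple of supports is precisely the missing well-founded invariant your argument needs---it would in particular justify your refinement---but once you have it the Noetherian induction on $\supp(A)$ and the irreducible-component count are no longer needed.
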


\begin{proof}
	Let $A$ be a separable ring of finite degree in~$\K$. If $A$ is not indecomposable, we can find nonzero rings $A_1, A_2 \in \K$ with $A\cong A_1\times A_2$. 
	We prove that any ring decomposition of $A$ in $\K$ has at most finitely many nonzero factors.  Suppose there is a sequence of nontrivial decompositions $A=A_1\times B_1$, $B_1=A_2\times B_2$,\ldots, with $B_n=A_{n+1}\times B_{n+1}$ for $n \geq 1$. 
	By~Proposition~\ref{prop:local}, we know $\deg (q_{\p}(B_{n}))\geq \deg (q_{\p}(B_{n+1}))$ for every $\p\in \Spc(\K)$, hence $\supp(B^{[i]}_n)\supseteq \supp(B^{[i]}_{n+1})$ for every $i\geq 0$. Seeing how $\Spc(\K)$ is Noetherian,
	we can find $k\geq 1$ with $\supp(B^{[i]}_n)=\supp(B^{[i]}_{n+1})$ for  every $i\geq 0$ and $n\geq k$.
	In particular, $\deg (q_{\p}(B_{k}))=\deg (q_{\p}(B_{k+1}))$ for every $\p\in \Spc(\K)$, so that $q_{\p}(A_{k+1})=0$ for all $\p\in \Spc(\K)$. By~Proposition~\ref{prop:degreep}, we conclude $A_{k+1}=0$.
\end{proof}

\section{Rings of Constant Degree}

\begin{definition} 
	We say a separable ring $A$ in $\K$ has \emph{constant degree} $d\in \N$ if the degree $\deg_{\K_{\p}}q_{\p}(A)$ equals $d$ for every ${\p}\in \supp(A)\subset \Spc(\K)$. 
\end{definition}

\begin{lemma}
	Let $A$ be a separable ring of degree~$d$ in~$\K$. Then $A$ has constant degree if and only if~$\supp(A^{[d]})=\supp (A)$.
\end{lemma}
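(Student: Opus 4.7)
The plan is to translate everything about $A^{[d]}$ into statements about $\deg_{\K_\p}(q_\p(A))$ via the tt-functors $q_\p:\K\rightarrow\K_\p$, and then the equivalence falls out by chasing inequalities.

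First I would invoke Proposition~\ref{prop:degreeproperties}(b) applied to each tt-functor $q_\p$, which is in particular additive and monoidal: this gives a natural ring isomorphism $q_\p(A^{[n]}) \cong q_\p(A)^{[n]}$ for every $n\geq 0$ and every $\p\in\Spc(\K)$, as well as the inequality $\deg_{\K_\p}(q_\p(A)) \leq \deg_{\K}(A) = d$.

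Next I would unwind $\supp(A^{[d]})$ prime by prime. For $\p\in\Spc(\K)$, note that $\p\in\supp(A^{[d]})$ iff $q_\p(A^{[d]})\neq 0$, which by the first step means $q_\p(A)^{[d]}\neq 0$, i.e.\ $\deg_{\K_\p}(q_\p(A)) \geq d$. Combined with the upper bound from step one, this says precisely that $\p\in\supp(A^{[d]})$ iff $\deg_{\K_\p}(q_\p(A)) = d$. On the other hand, $\p\in\supp(A)$ iff $q_\p(A)\neq 0$ iff $\deg_{\K_\p}(q_\p(A)) \geq 1$, and in particular the inclusion $\supp(A^{[d]})\subseteq\supp(A)$ is automatic.

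Finally I would conclude: the reverse inclusion $\supp(A)\subseteq\supp(A^{[d]})$ holds iff every $\p\in\supp(A)$ satisfies $\deg_{\K_\p}(q_\p(A))=d$, which is by definition the statement that $A$ has constant degree~$d$. There is no real obstacle here beyond bookkeeping; the whole argument is a direct consequence of the functoriality of the splitting tower under $q_\p$ and the maximum-of-local-degrees formula (Proposition~\ref{prop:degreep}), which guarantees that no $\p$ can have local degree strictly greater than~$d$.
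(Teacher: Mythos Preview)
Your proof is correct and follows essentially the same route as the paper's: both reduce the equality $\supp(A^{[d]})=\supp(A)$ to the pointwise statement that $q_{\p}(A^{[d]})\neq 0$ iff $\deg_{\K_{\p}}(q_{\p}(A))=d$, using the compatibility $q_{\p}(A^{[n]})\cong q_{\p}(A)^{[n]}$ from Proposition~\ref{prop:degreeproperties}(b). The only cosmetic difference is in establishing the inclusion $\supp(A^{[d]})\subseteq\supp(A)$: the paper reads it off from the summand relation $A^{[2]}\mid A\otimes A$, whereas you deduce it from the local degree bound---both are immediate.
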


\begin{proof}
	Note that $\supp(A^{[2]})\subseteq \supp(A)$ because $A\T A\cong A\times A^{[2]}$ in~$\K$. Hence $\supp(A^{[d]})\subseteq \supp(A)$.
	Now, let $\p \in\supp(A)$. Then $q_{\p}(A)$ has degree~$d$ if and only if $q_{\p}(A^{[d]})\neq 0$, in other words ${\p}\in \supp(A^{[d]})$
\end{proof}

\begin{lemma}\label{lem:lcdF}
	Let $A$ be a separable ring in $\K$ and suppose $F:\K\rightarrow \cat L$ is a tt-functor with $F(A)\neq 0$. If $A$ has constant degree~$d$, then $F(A)$ has constant degree~$d$.
	Conversely, if $F(A)$ has constant degree $d$ and $\supp(A)\subset \im(\Spc(F))$, then $A$ has constant degree~$d$.
\end{lemma}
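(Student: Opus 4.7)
My plan is to reduce the statement to the immediately preceding lemma, which says that a separable ring $A$ of degree $d$ has constant degree if and only if $\supp(A^{[d]}) = \supp(A)$. The two tools I will repeatedly use are Proposition~\ref{prop:degreeproperties}(b), which gives $F(A^{[n]}) \cong F(A)^{[n]}$ for every $n$ and the inequality $\deg_{\cat L}(F(A)) \leq \deg_{\K}(A)$; and the base-change formula $\supp_{\cat L}(F(x)) = \Spc(F)^{-1}(\supp_{\K}(x))$ recorded just after the definition of the spectrum.

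For the forward direction, assume $A$ has constant degree $d$, so $\deg(A) = d$ and $\supp(A^{[d]}) = \supp(A)$. Pulling this equality back through $\Spc(F)$ yields
\[
\supp(F(A)^{[d]}) \;=\; \Spc(F)^{-1}(\supp(A^{[d]})) \;=\; \Spc(F)^{-1}(\supp(A)) \;=\; \supp(F(A)).
\]
Since $F(A) \neq 0$, the set $\supp(F(A))$ is nonempty, so $F(A)^{[d]} \neq 0$ and $\deg(F(A)) \geq d$; combined with $\deg(F(A)) \leq \deg(A) = d$, this gives $\deg(F(A)) = d$, and a second application of the preceding lemma shows that $F(A)$ has constant degree $d$.

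For the converse, assume $F(A)$ has constant degree $d$ and $\supp(A) \subseteq \im(\Spc(F))$. I first establish $\deg(A) = d$. Since $\deg(F(A)) = d$, we have $F(A^{[d+1]}) \cong F(A)^{[d+1]} = 0$, so $\Spc(F)^{-1}(\supp(A^{[d+1]})) = \emptyset$; combined with $\supp(A^{[d+1]}) \subseteq \supp(A) \subseteq \im(\Spc(F))$, this forces $\supp(A^{[d+1]}) = \emptyset$ and hence $A^{[d+1]} = 0$ by separability. Thus $\deg(A) \leq d$, and together with $\deg(A) \geq \deg(F(A)) = d$ from Proposition~\ref{prop:degreeproperties}(b) we obtain $\deg(A) = d$. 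Exactly the same \emph{cancel-the-pullback} trick then promotes the equality $\supp(F(A)^{[d]}) = \supp(F(A))$ to $\supp(A^{[d]}) = \supp(A)$, and the preceding lemma concludes.

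The only delicate point is the upper bound $\deg(A) \leq d$ in the converse. Proposition~\ref{prop:degreeproperties}(b) always provides $\deg(F(A)) \leq \deg(A)$, i.e., the wrong inequality, and without the surjectivity hypothesis $\supp(A) \subseteq \im(\Spc(F))$ there is no reason to exclude a prime of $\supp(A^{[d+1]})$ sitting outside $\im(\Spc(F))$. That hypothesis is precisely what lets one reflect the vanishing of $F(A^{[d+1]})$ back to vanishing of $A^{[d+1]}$.
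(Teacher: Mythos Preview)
Your proof is correct and matches the paper's argument essentially line for line: both directions reduce to the characterization $\supp(A^{[d]}) = \supp(A)$ via the base-change identity $\supp_{\cat L}(F(x)) = \Spc(F)^{-1}(\supp_{\K}(x))$, with the surjectivity hypothesis used exactly to cancel the pullback in the converse. The only quibble is the phrase ``by separability'': the implication $\supp(A^{[d+1]}) = \emptyset \Rightarrow A^{[d+1]} = 0$ holds because a tensor-nilpotent \emph{ring} object is zero (it is a retract of any tensor power of itself via unit and multiplication), not by separability as such.
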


\begin{proof}
	We first note that $\deg(F(A))\leq \deg(A)$ by~Proposition~\ref{prop:degreeproperties}(b).
	Now, if $A$ has constant degree $d$, then
	\begin{align*}
	\supp_{\cat L}(F(A)^{[d]})
	&=\supp_{\cat L}(F(A^{[d]}))
	=\Spc(F)^{-1}(\supp_{\K}(A^{[d]}))
	=\Spc(F)^{-1}(\supp_{\K}(A))\\
	&=\supp_{\cat L}(F(A))\neq \emptyset,\end{align*}
	which shows $F(A)$ has constant degree~$d$.
	Conversely, suppose $F(A)$ has constant degree $d$ and $\supp(A)\subset \im(\Spc(F))$.
	In particular, $\supp(A^{[d+1]})\subset \im(\Spc(F))$, so
	$$\emptyset=\supp(F(A^{[d+1]}))=\Spc(F)^{-1}(\supp(A^{[d+1]}))$$ implies $\supp(A^{[d+1]})=\emptyset$. Thus $A$ has degree~$d$.
	Moreover, seeing how
	\begin{align*}
	\Spc(F)^{-1}(\supp_{\K}(A^{[d]}))
	=\supp_{\cat L}(F(A)^{[d]})
	=\supp_{\cat L}(F(A))
	=\Spc(F)^{-1}(\supp_{\K}(A)),\end{align*}
	%and $\supp_{\K}(A)\subset \im(\Spc(F))$,
	we can conclude $\supp_{\K}(A^{[d]})=\supp_{\K}(A)$.
\end{proof}

\begin{proposition}\label{prop:lcdiff}
	Let $A$ be a separable ring in~$\K$. Then $A$ has constant degree $d$ if and only if there exists a separable ring $B$ in $\K$ with $\supp(A)\subset \supp(B)$ and such that $F_B(A)\cong \1_B^{\times d}$.
	In particular, if $A$ is quasi-Galois in $\K$ with group~$\Gamma$, then $A$ has constant degree $|\Gamma|$ in~$\K$.
\end{proposition}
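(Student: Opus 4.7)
The plan is to prove the equivalence by exhibiting a natural candidate for $B$ in the forward direction (namely the top of the splitting tower), and then applying Lemma~\ref{lem:lcdF} in the reverse direction after checking that $\1_B^{\times d}$ has constant degree $d$ in $\M B$.

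For the forward direction, suppose $A$ has constant degree $d$. I would first observe that $A$ has finite degree exactly $d$ in $\K$: since $q_\p(A)$ has degree $d$ for $\p\in\supp(A)$ and is zero otherwise, Proposition~\ref{prop:degreep} yields $\deg_{\K}(A)=d$. I would then set $B:=A^{[d]}$. By the preceding lemma, constant degree is equivalent to $\supp(A^{[d]})=\supp(A)$, so the support condition holds. By Proposition~\ref{prop:degreeproperties}(a),
$$F_B(A)=F_{A^{[d]}}(A)\cong \1_B^{\times d}\times A^{[d+1]}\cong \1_B^{\times d},$$
since $A^{[d+1]}=0$.

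For the reverse direction, suppose such a $B$ exists. The idea is to apply Lemma~\ref{lem:lcdF} to the tt-functor $F_B\colon \K\to \M B$. First I would verify that $F_B(A)\cong \1_B^{\times d}$ has constant degree $d$ in $\M B$: its support is all of $\Spc(\M B)$, since $\1_B$ is the tensor unit, and for any $\q\in\Spc(\M B)$ the tt-functor $q_\q$ is monoidal and additive, so $q_\q(\1_B^{\times d})\cong \1_{(\M B)_\q}^{\times d}$, which has degree $d$ by Corollary~\ref{cor:1degree}. Next I would check the support hypothesis of Lemma~\ref{lem:lcdF}: by Theorem~\ref{th:f_Acoequalizer}, $\im(\Spc(F_B))=\supp(B)\supseteq \supp(A)$. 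Lemma~\ref{lem:lcdF} then delivers the conclusion that $A$ has constant degree $d$.

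Finally, the \emph{in particular} statement reduces to taking $B:=A$ in the equivalence just proved: if $A$ is quasi-Galois in $\K$ with group $\Gamma$, then $F_A(A)\cong \1_A^{\times |\Gamma|}$ by Theorem~\ref{th:galoisequivalences}, and trivially $\supp(A)\subseteq\supp(A)$, so $A$ has constant degree $|\Gamma|$. I do not expect any real obstacle here; the only subtlety is the small verification that the candidate $B=A^{[d]}$ really has support equal to $\supp(A)$ (which is the content of the preceding lemma) and the auxiliary observation that the product $\1_B^{\times d}$ satisfies constant degree in $\M B$, both of which reduce cleanly to statements already established.
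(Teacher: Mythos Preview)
Your proof is correct and follows essentially the same route as the paper: take $B:=A^{[d]}$ for the forward direction via Proposition~\ref{prop:degreeproperties}(a) and the preceding lemma, and combine Theorem~\ref{th:f_Acoequalizer} with Lemma~\ref{lem:lcdF} for the converse. One minor citation issue in the final step: Theorem~\ref{th:galoisequivalences} assumes $A$ is indecomposable, so for the ``in particular'' you should instead invoke Definition~\ref{def:galois} directly---the isomorphism $\lambda_\Gamma$ is a left $A$-algebra map, giving $F_A(A)\cong \1_A^{\times|\Gamma|}$ without any indecomposability hypothesis.
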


\begin{proof}
	If $A$ has constant degree $d$, we can let $B:=A^{[d]}$ and use~Proposition~\ref{prop:degreeproperties}(a).
	On the other hand, if $A$ and $B$ are separable rings in $\K$ with $\supp(A)\subset \supp(B)$,
	then~Theorem~\ref{th:f_Acoequalizer} and~Lemma~\ref{lem:lcdF} show that $A$ has constant degree $d$ whenever $F_B(A)$ has constant degree~$d$.
\end{proof}

\begin{proposition}\label{prop:lcddecomposition}
	Let $A$ be a separable ring of constant degree in $\K$ with connected support $\supp(A)\subset\Spc(\K)$. If $B$ and $C$ are nonzero rings in $\K$ such that $A=B\times C$, then $B$ and $C$ have constant degree and $\supp (A)=\supp (B)=\supp(C)$.
\end{proposition}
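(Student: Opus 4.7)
The plan is to exploit connectedness of $\supp(A)$ together with the local additivity of degree from~Proposition~\ref{prop:local} to simultaneously pin down the degrees and supports of $B$ and $C$. The core of the argument will be to partition $\supp(A)$ into two closed pieces whose emptiness is controlled by the degree of $B$ (respectively $C$) at each prime.

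Concretely, I would first observe that for every $\p \in \supp(A)$ the tt-functor $q_\p$ sends the ring decomposition $A \cong B \times C$ to $q_\p(A) \cong q_\p(B) \times q_\p(C)$, so~Proposition~\ref{prop:local} applied in the local category $\K_\p$ gives
\[
d = \deg_{\K_\p} q_\p(B) + \deg_{\K_\p} q_\p(C).
\]
Combined with the identity $\supp(X^{[k]}) = \{\p \in \Spc(\K) : \deg_{\K_\p} q_\p(X) \geq k\}$ — which follows from~Proposition~\ref{prop:degreeproperties}(b) applied to $q_\p$ together with the observation that a nonzero ring in $\K_\p$ has degree at least $1$ — this produces, for each $1 \leq k \leq d$, a decomposition
\[
\supp(A) = \supp(B^{[k]}) \sqcup \supp(C^{[d-k+1]})
\]
into disjoint closed subsets: disjointness because the two degrees summing past $d$ is impossible, and exhaustiveness because exactly one of $\deg q_\p(B) \geq k$ or $\deg q_\p(C) \geq d-k+1$ holds at each $\p \in \supp(A)$.

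Next, I would invoke connectedness of $\supp(A)$, which forces one side of each such partition to be empty. Since $B$ is a nonzero ring it retracts off $B^{\otimes n}$ for every $n$ via $\eta^{\T n-1}\T 1_B$ and the iterated multiplication, so $B$ is not $\T$-nilpotent and $\supp(B) \neq \emptyset$; the partition at $k=1$ then forces $\supp(B) = \supp(A)$, and the symmetric statement $\supp(C) = \supp(A)$ follows. Finally, letting $d_B$ be the largest integer with $\supp(B^{[d_B]}) \neq \emptyset$ (necessarily $\leq d$ by~Proposition~\ref{prop:degreep}, since $\deg q_\p(B) \leq d$ uniformly in $\p$), the partitions at $k = d_B$ and $k = d_B + 1$ respectively show that $\supp(B^{[d_B]}) = \supp(A)$ and $B^{[d_B+1]} = 0$; together these identify $\deg B = d_B$ and express that $B$ has constant degree $d_B$. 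The identical argument with $B$ and $C$ interchanged handles $C$.

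The hard part is really just setting up the partition correctly; once the identity $\deg_{\K_\p} q_\p(B) + \deg_{\K_\p} q_\p(C) = d$ is in hand together with the degree-vs-support translation from~Proposition~\ref{prop:degreeproperties}(b), the remainder is a clean clopen/connectedness argument that requires no further input.
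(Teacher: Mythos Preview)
Your proposal is correct and follows essentially the same route as the paper: establish the disjoint closed decomposition $\supp(A)=\supp(B^{[k]})\sqcup\supp(C^{[d-k+1]})$ via local additivity of degree (Proposition~\ref{prop:local}), then let connectedness kill one piece for well-chosen $k$. The paper is slightly more economical, taking $k=\deg(B)$ and $k=d+1-\deg(C)$ directly rather than first treating $k=1$; your extra explicitness about why a nonzero ring has nonempty support and about the translation $\supp(X^{[k]})=\{\p:\deg_{\K_\p}q_\p(X)\geq k\}$ is harmless and arguably clarifying.
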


\begin{proof}
	Given that $A$ has constant degree $d$, we claim that for every $1\leq n \leq d$, $$\supp(A)=\supp(B^{[n]})\bigsqcup \supp (C^{[d-n+1]}).$$ 
	Fix $1\leq n \leq d$ and suppose ${\p}\in \supp(B^{[n]})\subset \supp(A)$, so that $\deg(q_{\p}(B))\geq n$. By~Proposition~\ref{prop:local}, $\deg(q_{\p}(C))\leq d-n$ and hence ${\p}\notin \supp (C^{[d-n+1]})$. On the other hand, if ${\p}\in \supp(A)-\supp(B^{[n]})$, we get $\deg(q_{\p}(B))\leq n-1$ and $\deg(q_{\p}(C))\geq d-n+1$. So, ${\p}\in \supp (C^{[d-n+1]})$ and the claim follows.
	
	Now, assuming $A$ has connected support,
	taking $n=\deg(B)$ shows that 
	$\supp (A)=\supp(B^{[\deg (B)]})=\supp(B)$. Similarly, letting $n=d+1-\deg(C)$ shows that $\supp (A)=\supp(C^{[\deg (C)]})=\supp(C)$. In other words, $B$ and $C$ have constant degree and $\supp (A)=\supp (B)=\supp (C)$.
\end{proof} 

\section{Quasi-Galois Theory and Tensor Triangular Geometry}

Let $A$ be a separable ring in $\K$ and $\Gamma$ a finite group of ring morphisms of~$A$. Then, $\Gamma$ acts on $\M A$ (see~Remark~\ref{rem:F_h}) and therefore on the spectrum $\Spc(\M A)$.

\begin{theorem}\label{th:galoisspectrum}
	Suppose $A$ is quasi-Galois in~$\K$ with group~$\Gamma$. Then,
	$$\supp(A)\cong \Spc(\M A)/\Gamma.$$
\end{theorem}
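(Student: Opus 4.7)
My plan is to unpack Theorem~\ref{th:f_Acoequalizer} and to identify the two parallel arrows explicitly using the Galois isomorphism $\lambda_{\Gamma}$.

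Since $A$ is quasi-Galois with group $\Gamma$, Lemma~\ref{lem:lambdaiso}(c) tells us that $A$ is separable of finite degree $|\Gamma|$, so Theorem~\ref{th:f_Acoequalizer} presents $\supp(A)$ as the topological coequaliser of $f_1,f_2:\Spc(\M{(A\T A)})\rightrightarrows \Spc(\M A)$, induced by extension of scalars along $1\T\eta$ and $\eta\T 1$ respectively. The ring isomorphism $\lambda_{\Gamma}:A\T A\xrightarrow{\sim}\prod_{\gamma\in\Gamma}A_{\gamma}$ yields, by the same product decomposition used in the proof of Proposition~\ref{prop:uniquedecomposition}, a tt-equivalence $\M(A\T A)\simeq\prod_{\gamma\in\Gamma}\M A$, and hence a homeomorphism $\Spc(\M(A\T A))\cong\bigsqcup_{\gamma\in\Gamma}\Spc(\M A)$.

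Under this identification, I want to show that $f_1$ restricts to the identity on every component, while $f_2$ restricts to $\Spc(F_{\gamma})$ on the $\gamma$-th component. This follows from diagram~\eqref{eq:lambdatriangle}: since $\pr_\gamma\lambda_\Gamma(1\T\eta)=\pr_\gamma\varphi_1=1_A$ for every $\gamma$, the composite $F_{\lambda_\Gamma}\circ F_{1\T\eta}$ agrees component-wise with the diagonal functor $\M A\to\prod_\gamma\M A$; since $\pr_\gamma\lambda_\Gamma(\eta\T 1)=\pr_\gamma\varphi_2=\gamma$, the composite $F_{\lambda_\Gamma}\circ F_{\eta\T 1}$ sends $x$ to $(F_\gamma(x))_\gamma$. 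Applying $\Spc$ and using $F_{gh}\cong F_gF_h$ from Remark~\ref{rem:F_h} gives the claimed descriptions of $f_1$ and $f_2$.

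The coequaliser in $\mathrm{Top}$ of two maps $\bigsqcup_\gamma X\rightrightarrows X$ whose $\gamma$-components are respectively the identity and a self-homeomorphism $\Spc(F_\gamma)$ is, by definition, the orbit space $X/\Gamma$ for the $\Gamma$-action on $\Spc(\M A)$ introduced just before the theorem. Combining this with the coequaliser presentation of $\supp(A)$ will give the desired homeomorphism $\supp(A)\cong\Spc(\M A)/\Gamma$. I expect the main obstacle to be the careful book-keeping in the penultimate step, especially checking that the tt-equivalence $\M(A\T A)\simeq\prod_\gamma\M A$ really does intertwine $F_{1\T\eta}$ and $F_{\eta\T 1}$ with the diagonal and the $\Gamma$-twisted diagonal functors, so that we end up with exactly the two maps whose coequaliser is the orbit space.
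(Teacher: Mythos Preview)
Your proposal is correct and follows essentially the same route as the paper: both invoke Theorem~\ref{th:f_Acoequalizer}, use the isomorphism $\lambda_{\Gamma}$ together with diagram~\eqref{eq:lambdatriangle} to identify $\Spc(\M{(A\T A)})$ with $\bigsqcup_{\gamma}\Spc(\M A)$ so that $f_1$ becomes the identity on each component and $f_2$ becomes the $\Gamma$-action, and then read off the orbit space as the coequaliser. Your extra care in noting that Lemma~\ref{lem:lambdaiso}(c) supplies the finite-degree hypothesis needed for Theorem~\ref{th:f_Acoequalizer} is a detail the paper leaves implicit.
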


\begin{proof}
	Diagram~\ref {eq:lambdatriangle} yields a diagram of topological spaces
	$$
	\begin{tikzcd}[column sep=small]
	& \Spc(\M A)&\\
	\Spc(\M{(A\T A)})
	\arrow[yshift=0.5ex, xshift=-0.5ex]{ru}{f_1}
	\arrow[swap, yshift=-0.3ex, xshift=0.7ex]{ru}{f_2}
	\arrow{rr}{\cong}[swap]{l}
	&& \Spc(\prod_{\gamma\in \Gamma} \M{A_{\gamma}}),
	\arrow[swap, yshift=0.5ex, xshift=0.5ex]{lu}{g_2}
	\arrow[yshift=-0.3ex, xshift=-0.7ex]{lu}{g_1}
	\end{tikzcd}$$
	where $f_1, f_2, g_1, g_2, l$ are the maps induced by extension-of-scalars along the morphisms $1\T \eta$, $\eta\T 1$, $\varphi_1$, $\varphi_2$ and $\lambda$ respectively (in the notation of Definition~\ref{def:lambda}). 
	That is, $g_1, g_2:\bigsqcup_{\gamma\in \Gamma} \Spc(\M {A_{\gamma}})\rightarrow \Spc(\M A)$ are continuous maps such that $g_1 \inc_{\gamma}$ is the identity and $g_2 \inc_{\gamma}$ is the action of $\gamma$ on $\Spc(\M A)$.
	Now, the~coequaliser~\ref{eq:f_Acoequalizer}
	turns into
	$$\begin{tikzcd}
	\bigsqcup_{\gamma\in \Gamma} \Spc(\M {A_{\gamma}})
	\arrow[yshift=0.7ex]{r}{g_1}
	\arrow[swap, yshift=-0.7ex]{r}{g_2}&
	\Spc(\M A)
	\arrow[two heads]{rr}{f_A}
	&& \supp(A),
	\end{tikzcd}$$
	which shows $\supp(A)\cong \Spc(\M A)/\Gamma.$
\end{proof}

\begin{definition}
	Let $A$ be a ring in $\K$.
	We say $A$ is \emph{faithful} if the extension-of-scalars functor $F_A$ is faithful. We call $A$ \emph{nil-faithful} if $F_A(f)=0$ implies $f$ is $\T$-nilpotent for any morphism $f$ in $\K$.
\end{definition}

\begin{remark}
	By~\cite[Prop.3.15]{bquillen}, $A$ is nil-faithful if and only if $\supp(A)=\Spc(\K)$. 
	If $A$ is nil-faithful and quasi-Galois in~$\K$ with group~$\Gamma$, Theorem~\ref{th:galoisspectrum} recovers $\Spc(\K)$ as the $\Gamma$-orbits of $\Spc(\M A)$.
\end{remark}

\begin{remark}
	If $A$ is a faithful ring in $\K$, being quasi-Galois in~$\K$ with group~$\Gamma$ really is being \emph{Galois over} $\1$ in the sense of Auslander and Goldman~\cite[App.]{AG}.
	Indeed,
	$$\begin{tikzcd}
	\mathbb{1} \arrow[hook]{r}{\eta} & A
	\arrow[yshift=0.7ex]{r}{1\T \eta }
	\arrow[swap, yshift=-0.7ex]{r}{\eta \T 1}&
	A\T A
	\end{tikzcd}$$
	is an equaliser by~\cite[Prop.2.12]{bdescent}.
	Under the isomorphism $A\T A\cong \prod_{\gamma\in \Gamma} A_{\gamma}$, this becomes
	$$\begin{tikzcd}
	\mathbb{1} \arrow[hook]{r}{\eta} & A
	\arrow[yshift=0.7ex]{r}{\varphi_1}
	\arrow[swap, yshift=-0.7ex]{r}{\varphi_2}&
	\prod_{\gamma\in \Gamma} A_{\gamma},
	\end{tikzcd}$$
	where $\pr_{\gamma}\varphi_1=1_A$ 
	and $\pr_{\gamma}\varphi_2=\gamma$ for all $\gamma \in \Gamma$.
\end{remark}

The following lemma is a tensor-triangular version of~Lemma~\ref{lem:selfsplitfactors}.

\begin{lemma}\label{lem:ttselfsplitfactors}
	Let $A$ be a separable ring in $\K$ that splits itself. If $A_1$ and $A_2$ are indecomposable ring factors of $A$, then
	$\supp(A_1)\cap \supp(A_2)=\emptyset$ or $A_{1}\cong A_2$.
	%In particular, if $\supp(A)$ is connected, then any two indecomposable ring factors of $A$ are isomorphic.
\end{lemma}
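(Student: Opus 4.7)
The plan is to reduce to Lemma~\ref{lem:selfsplitfactors} by exhibiting a ring morphism $A_2 \to A_1$ whenever $\supp(A_1) \cap \supp(A_2) \neq \emptyset$; Lemma~\ref{lem:selfsplitfactors} then immediately yields $A_1 \cong A_2$. Setting $d = \deg(A)$, the fact that $A$ splits itself gives $F_A(A) \cong \1_A^{\times d}$ in $\M A$. The decomposition $A \cong A_1 \times A_2 \times \ldots$ induces an equivalence $\M A \cong \M{A_1} \times \M{A_2} \times \ldots$ (as in the proof of Proposition~\ref{prop:uniquedecomposition}) under which the projection $\M A \to \M{A_i}$ sends $F_A(x)$ to $F_{A_i}(x)$, by Remark~\ref{rem:F_h}. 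Projecting the splitting isomorphism to $\M{A_1}$ therefore yields $F_{A_1}(A) \cong \1_{A_1}^{\times d}$, and since $A_2$ is a ring factor of $A$ and $F_{A_1}$ is monoidal, $F_{A_1}(A_2)$ appears as a ring factor of $F_{A_1}(A)$.

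Next I would verify that $\1_{A_1}$ is indecomposable as a ring in $\M{A_1}$: any idempotent $A_1$-linear endomorphism of $A_1$ is automatically $A_1^e$-linear by Remark~\ref{rem:endo}, and hence equals $0$ or $1_{A_1}$ by Definition~\ref{def:indecomposable} applied to the indecomposable ring $A_1$ in $\K$. Proposition~\ref{prop:uniquedecomposition} then forces any ring factor of $\1_{A_1}^{\times d}$ to be of the form $\1_{A_1}^{\times k}$, so in particular $F_{A_1}(A_2) \cong \1_{A_1}^{\times k}$ for some $k \geq 0$.

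The final step is a spectral one. By Theorem~\ref{th:f_Acoequalizer} the image of $f_{A_1} := \Spc(F_{A_1})$ equals $\supp(A_1)$, and functoriality of the spectrum gives $\supp(F_{A_1}(A_2)) = f_{A_1}^{-1}(\supp(A_2))$. If $\supp(A_1) \cap \supp(A_2) \neq \emptyset$, this preimage is nonempty, so $F_{A_1}(A_2) \neq 0$ and $k \geq 1$. The equivalence (i)$\Leftrightarrow$(ii) of Proposition~\ref{prop:An} then produces a ring morphism $A_2 \to A_1$, and Lemma~\ref{lem:selfsplitfactors} finishes the argument.

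The main obstacle is essentially bookkeeping: transporting the self-splitting $F_A(A) \cong \1_A^{\times d}$ across the equivalence $\M A \cong \prod_i \M{A_i}$, and checking indecomposability of $\1_{A_1}$ in $\M{A_1}$. The only conceptual move is the spectral translation of ``supports meet'' into ``$\1_{A_1}$ is a ring factor of $F_{A_1}(A_2)$'', which is exactly the input Proposition~\ref{prop:An} demands.
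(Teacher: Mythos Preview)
Your proposal is correct and follows essentially the same route as the paper's proof: both show $F_{A_1}(A_2)\cong \1_{A_1}^{\times t}$ for some $t\geq 0$, use the support hypothesis to force $t\geq 1$, invoke Proposition~\ref{prop:An} to obtain a ring morphism $A_2\to A_1$, and finish with Lemma~\ref{lem:selfsplitfactors}. The only cosmetic difference is in ruling out $t=0$: the paper observes directly that $F_{A_1}(A_2)=0$ would give $A_1\T A_2=U_{A_1}F_{A_1}(A_2)=0$ and hence $\supp(A_1)\cap\supp(A_2)=\supp(A_1\T A_2)=\emptyset$, whereas you reach the same conclusion via the preimage formula $\supp(F_{A_1}(A_2))=f_{A_1}^{-1}(\supp(A_2))$ and Theorem~\ref{th:f_Acoequalizer}.
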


\begin{proof}
	Let $A_1$ and $A_2$ be indecomposable ring factors of $A$ and suppose $A$ splits itself. We know $F_{A_1}(A)\cong \1_{A_1}^{\times \deg(A)}$ and hence $F_{A_1}(A_2)\cong \1_{A_1}^{\times t}$ for some $t\geq 0$.
	In fact, $t=0$ only if $\supp(A_1\T A_2)=\supp(A_1)\cap\supp(A_2)=\emptyset$.
	If $t>0$, we can find a ring morphism $A_2\rightarrow A_1$ by~Proposition~\ref{prop:An}. Lemma~\ref{lem:selfsplitfactors} shows this is an isomorphism. 
\end{proof}

\begin{proposition}\label{prop:cdsplittingring}
	Suppose $\K$ is nice. Let $A$ be a separable ring in $\K$ with connected support $\supp(A)$ and constant degree. Then the splitting ring $A^{*}$ of~$A$ is unique up to isomorphism and $\supp(A)= \supp(A^{*})$.
\end{proposition}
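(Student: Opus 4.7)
My plan is to identify the candidate splitting rings of $A$ as the indecomposable ring factors of $A^{[d]}$ with $d=\deg(A)$, and then use Lemma~\ref{lem:ttselfsplitfactors} together with Proposition~\ref{prop:lcddecomposition} to force them all to coincide.

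First, by niceness of $\K$ I may decompose $A^{[d]}\cong A_1\times\cdots\times A_n$ into indecomposable rings, and Proposition~\ref{prop:splittingring} identifies the splitting rings of $A$ as precisely these $A_i$ (up to isomorphism). So the statement reduces to showing that all the $A_i$ are pairwise isomorphic and that each has support equal to $\supp(A)$.

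The central step is to verify that $A^{[d]}$ itself inherits both hypotheses on $A$. Constant degree of $A$ gives $\supp(A^{[d]})=\supp(A)$ by the unnumbered lemma immediately following the definition of constant degree, and this set is connected by assumption. Moreover, Remark~\ref{rem:Adsplits} computes $F_{A^{[d]}}(A^{[d]})\cong\1_{A^{[d]}}^{\times d!}$, so Proposition~\ref{prop:lcdiff} (applied with $B=A^{[d]}$) shows $A^{[d]}$ has constant degree $d!$. I can then apply Proposition~\ref{prop:lcddecomposition} to each decomposition $A^{[d]}\cong A_i\times\prod_{j\neq i}A_j$ to conclude $\supp(A_i)=\supp(A^{[d]})=\supp(A)$ for every $i$.

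Finally, Remark~\ref{rem:Adsplits} also says that $A^{[d]}$ splits itself, so Lemma~\ref{lem:ttselfsplitfactors} forces any two indecomposable factors $A_i,A_j$ to have disjoint support or to be isomorphic. Since all their supports equal the (nonempty) set $\supp(A)$, disjointness is impossible, and all the $A_i$ are isomorphic to a unique $A^*$ with $\supp(A^*)=\supp(A)$, proving the proposition. The main obstacle is verifying that $A^{[d]}$ inherits the constant-degree and connected-support hypotheses so that Proposition~\ref{prop:lcddecomposition} applies; once that is in hand, the rest is a direct assembly of previously established results.
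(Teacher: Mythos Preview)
Your proof is correct and follows essentially the same route as the paper: reduce to the indecomposable factors of $A^{[d]}$ via Proposition~\ref{prop:splittingring}, verify that $A^{[d]}$ has connected support $\supp(A)$ and constant degree $d!$, apply Proposition~\ref{prop:lcddecomposition} to get equal supports for all factors, and finish with Lemma~\ref{lem:ttselfsplitfactors}. Your write-up is in fact more explicit than the paper's, which simply asserts that $A^{[d]}$ has constant degree $d!$ without naming Remark~\ref{rem:Adsplits} and Proposition~\ref{prop:lcdiff}.
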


\begin{proof}
	Let $d:=\deg(A)$. We prove that any two indecomposable ring factors, say $A_1$ and $A_2$, of $A^{[d]}$ are isomorphic.
	Note that $\supp(A)=\supp(A^{[d]})$ is connected and $A^{[d]}$ has constant degree $d!$, so that 
	$\supp(A)= \supp(A_1)= \supp(A_2)$ by~Proposition~\ref{prop:lcddecomposition}.
	Lemma~\ref{lem:ttselfsplitfactors} now shows $A_1$ and $A_2$ are isomorphic. 
\end{proof}

\begin{remark}
	In what follows, we consider a separable ring $A$ in $\K$ and assume the spectrum $\Spc(\M A)$ is connected, which implies that $A$ is indecomposable. Moreover, if the tt-category $\M A$ is \emph{strongly closed}, $\Spc(\M A)$ is connected if and only if $A$ is indecomposable, see~\cite[Th.2.11]{bfiltration}. We note that many tt-categories are strongly closed, including all examples given in this paper. 
\end{remark}

\begin{proposition}\label{prop:galoissplit}
	Suppose $\K$ is nice. Let $A$ be a separable ring in $\K$ and suppose $\Spc(\M A)$ is connected. Let $B$ be an $A$-algebra with $\supp(A)=\supp(B)$. If $B$ is quasi-Galois in~$\K$ with group $\Gamma$, then $B$ splits~$A$. 
	In particular, the degree of $A$ in $\K$ is constant.  
\end{proposition}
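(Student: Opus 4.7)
The strategy is to show $F_B(A) \cong \1_B^{\times d}$ for $d := \deg(A)$, from which the constant-degree claim follows via Proposition~\ref{prop:lcdiff}. Since $\supp(A) = \supp(B)$, Lemma~\ref{lem:degreeproperties} gives $\deg_{\M B}(F_B(A)) = d$, so by Corollary~\ref{cor:1factor}(b) it is enough to produce $\1_B^{\times d}$ as a ring factor of $F_B(A)$, equivalently (by Proposition~\ref{prop:An}) to exhibit $d$ distinct ring morphisms $A \to B$ in $\K$. First I would reduce to $B$ indecomposable: $B$ is quasi-Galois hence has constant degree $|\Gamma|$ (Proposition~\ref{prop:lcdiff}), and $\supp(B)$ is connected as the image of the connected $\Spc(\M A)$ under the surjection $f_A$. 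Decomposing $B = B_1 \times \cdots \times B_k$ by niceness, Proposition~\ref{prop:lcddecomposition} yields $\supp(B_i) = \supp(A)$ for every $i$, and Corollary~\ref{cor:galoisfactors} together with Corollary~\ref{cor:galoissplitring} makes each $B_i$ quasi-Galois; since $F_B(A) \cong \prod_i F_{B_i}(A)$ and $\1_B^{\times d} \cong \prod_i \1_{B_i}^{\times d}$, it suffices to treat each $B_i$ in isolation.

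With $B$ indecomposable, $\overline{B}$ is indecomposable in $\M A$; by Lemma~\ref{lem:qgextensions} the ring $F_A(B)$ is quasi-Galois in $\M A$, and since $\overline{B}$ is an indecomposable ring factor of it by Corollary~\ref{cor:BringfactorF_A(B)}, Corollary~\ref{cor:galoisfactors} makes $\overline{B}$ itself quasi-Galois in $\M A$. I would then induct on $d$. For $d = 1$ the result is immediate: the given $\alpha: A \to B$ exhibits $\1_B$ as a ring factor of $F_B(A)$ (Proposition~\ref{prop:An}), which has degree $1$, forcing $F_B(A) \cong \1_B$. For $d > 1$, Proposition~\ref{prop:degreeproperties}(a) gives $F_A(A) \cong \1_A \times A^{[2]}$, so Proposition~\ref{prop:diagramequivalence} yields
\[ F_B(A) \cong F_{\overline{B}}(\1_A \times A^{[2]}) \cong \1_B \times F_{\overline{B}}(A^{[2]}) \quad \text{in } \M B. \]
It then remains to show $F_{\overline{B}}(A^{[2]}) \cong \1_B^{\times(d-1)}$. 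Here $A^{[2]}$ is separable of degree $d-1$ in $\M A$, $\overline{B}$ is quasi-Galois in $\M A$, and $\overline{B}$ becomes an $A^{[2]}$-algebra via the composite $A^{[2]} \hookrightarrow F_A(A) \to F_A(B) \twoheadrightarrow \overline{B}$. The plan is to apply the proposition inductively with $(\M A, A^{[2]}, \overline{B})$ in place of $(\K, A, B)$, yielding the desired decomposition of $F_{\overline{B}}(A^{[2]})$.

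The main technical obstacle is verifying that the inductive hypotheses actually propagate to this new setting: one needs $\M A$ to be nice, the spectrum $\Spc(\Mod{\M A}{A^{[2]}})$ to be connected, and the supports $\supp_{\M A}(A^{[2]})$ and $\supp_{\M A}(\overline{B})$ to coincide. These support calculations should flow from the coequaliser description of $\supp(A)$ in Theorem~\ref{th:f_Acoequalizer} combined with the Galois-orbit description $\supp(B) \cong \Spc(\M B)/\Gamma$ of Theorem~\ref{th:galoisspectrum}, together with the fact that quasi-Galois rings have constant degree equal to the order of their Galois group.
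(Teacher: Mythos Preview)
Your inductive strategy has a genuine gap that the acknowledged ``technical obstacle'' does not resolve. The hypothesis that $\Spc(\Mod{\M A}{A^{[2]}}) \cong \Spc(\M{A^{[2]}})$ be connected simply fails to propagate: in general $A^{[2]}$ decomposes nontrivially (for instance, when $A=A^G_H$ the Mackey formula gives $A^{[2]} \cong \prod_{[g]\neq[1]} A_{H\cap H^g}$), and then its module category has disconnected spectrum. Decomposing $A^{[2]}$ into indecomposables and treating each factor separately does not rescue the argument either, since a ring morphism $A^{[2]} \to \overline{B}$ factors through only one such indecomposable factor (Lemma~\ref{lem:ringmorphism}), with no evident mechanism to exhibit $\overline{B}$ as an algebra over the remaining ones. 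There is also a smaller issue: your proposed composite $A^{[2]} \hookrightarrow F_A(A) \to F_A(B) \twoheadrightarrow \overline{B}$ is not a ring morphism, because the inclusion of a ring factor into a product is never unital.

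The paper argues directly, without induction, and the key move is to analyse $F_A(B)$ rather than $F_B(A)$. Since $F_A(B)$ is quasi-Galois in $\M A$ with $\supp(F_A(B))=f_A^{-1}(\supp B)=\Spc(\M A)$ connected, Proposition~\ref{prop:lcddecomposition} forces all its indecomposable ring factors to share this support, and Lemma~\ref{lem:ttselfsplitfactors} (a quasi-Galois ring splits itself) then forces them all to be isomorphic to $\overline{B}$. Thus $F_A(B)\cong \overline{B}^{\times t}$; forgetting to $\K$ gives $A\T B\cong B^{\times t}$, and computing $F_B(A\T B)$ as both $\1_B^{\times t\deg(B)}$ and $F_B(A)^{\times\deg(B)}$ yields $F_B(A)\cong \1_B^{\times t}$ with $t=\deg(A)$ by Lemma~\ref{lem:degreeproperties}.
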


\begin{proof}
	If $B$ is quasi-Galois in $\K$ for some group $\Gamma$, then all of its indecomposable factors are also quasi-Galois by~Corollary~\ref{cor:galoisfactors}. What is more, $\supp(B)=f_A(\Spc(\M A))$ is connected, so the indecomposable factors of $B$ have support equal to $\supp(B)$ by~Proposition~\ref{prop:lcddecomposition}.
	It thus suffices to prove the proposition when $B$ is indecomposable.
	Now, $F_A(B)$ is quasi-Galois by~Lemma~\ref{lem:qgextensions} and 
	$\supp(F_A(B))=f_A^{-1}(\supp(B))=\Spc(\M A)$ is connected.
	By~Corollary~\ref{cor:BringfactorF_A(B)},	
	$\overline{B}$ is an indecomposable ring factor of $F_A(B)$, and
	all ring factors of $F_A(B)$ have equal support by~Proposition~\ref{prop:lcddecomposition}.
	Lemma~\ref{lem:ttselfsplitfactors} now shows that
	$F_A(B)\cong \overline{B}^{\times t}$ for some $t\geq 1$.
	Forgetting the $A$-action, we get
	$A\T B\cong B^{\times t}$ in $\K$ and
	$F_B(A\T B)\cong F_B(B^{\times t})\cong \1_B^{\times dt}$ in $\M B$, where $d:=\deg(B)$.
	On the other hand,
	$F_B(A\T B)\cong F_B(A)\T_B \1_B^{\times d} \cong (F_B(A))^{\times d}$.
	It follows that $F_B(A)\cong \1_B^{\times t}$, with $t=\deg(A)$ by~Lemma~\ref{lem:degreeproperties}.
\end{proof}

\begin{theorem}\label{th:splittingring}\emph{(Quasi-Galois Closure).}
	Suppose $\K$ is nice. Let $A$ be a separable ring of constant degree in $\K$ and suppose $\Spc(\M A)$ is connected. The splitting ring $A^{*}$ is the quasi-Galois closure of~$A$. That is,
	$A^{*}$ is quasi-Galois in~$\K$, $\supp(A)=\supp(A^{*})$ and for any  $A$-algebra $B$ that is quasi-Galois in~$\K$ with $\supp(A)=\supp(B)$, there exists a ring morphism $A^{*}\rightarrow $B.
\end{theorem}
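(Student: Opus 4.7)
The first two claims follow quickly from results already established. Since $\K$ is nice, $A$ has constant degree, and $\Spc(\M A)$ is connected---which by Theorem~\ref{th:f_Acoequalizer} forces $\supp(A)$ to be a continuous image of a connected space, hence connected---Proposition~\ref{prop:cdsplittingring} produces a splitting ring $A^{*}$, unique up to isomorphism, with $\supp(A)=\supp(A^{*})$. Because $A^{*}$ is a splitting ring of the separable ring $A$ of finite degree, Corollary~\ref{cor:galoissplitring} immediately yields that $A^{*}$ is quasi-Galois in~$\K$.

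For the universal property, let $B$ be a quasi-Galois $A$-algebra with $\supp(A)=\supp(B)$. Proposition~\ref{prop:galoissplit} gives $F_B(A)\cong\1_B^{\times d}$ in $\M B$, where $d:=\deg(A)$, i.e. $B$ splits~$A$. Being quasi-Galois, $B$ is separable of finite degree, so niceness lets us decompose $B\cong B_1\times\cdots\times B_n$ into indecomposable rings. Under the decomposition $\M B\cong\M{B_1}\times\cdots\times\M{B_n}$ the object $F_B(A)$ corresponds to $(F_{B_1}(A),\dots,F_{B_n}(A))$ and $\1_B^{\times d}$ corresponds to $(\1_{B_1}^{\times d},\dots,\1_{B_n}^{\times d})$, so $F_{B_i}(A)\cong\1_{B_i}^{\times d}$ for each $i$. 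In particular each $B_i$ is an indecomposable separable ring in $\K$ that splits~$A$.

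The construction of the ring morphism $A^{*}\to B$ now proceeds factor by factor. Applying Proposition~\ref{prop:An} to each $B_i$ with $n=d$, we obtain a ring morphism $A^{[d]}\to B_i$. Lemma~\ref{lem:ringmorphism} refines this to a ring morphism $C_i\to B_i$ for some indecomposable ring factor $C_i$ of~$A^{[d]}$. By Proposition~\ref{prop:splittingring}, the indecomposable ring factors of $A^{[d]}$ are precisely the splitting rings of~$A$, and Proposition~\ref{prop:cdsplittingring} makes all of them isomorphic to~$A^{*}$. Hence each $C_i\cong A^{*}$, producing ring morphisms $A^{*}\to B_i$ for every $i$, and these assemble via the universal property of the product of rings into a single ring morphism $A^{*}\to B_1\times\cdots\times B_n\cong B$.

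The main obstacle is that the argument depends crucially on both the \emph{existence} of splitting rings (Proposition~\ref{prop:splittingring}) and their \emph{uniqueness up to isomorphism} (Proposition~\ref{prop:cdsplittingring}): without uniqueness, the various $C_i$ could come from a priori non-isomorphic indecomposable factors of $A^{[d]}$, and there would be no common source from which to read off the morphisms into the $B_i$. This is precisely why the hypotheses ``constant degree'' and ``$\Spc(\M A)$ connected'' are both essential---they are what feed Proposition~\ref{prop:cdsplittingring} and make $A^{*}$ well-defined.
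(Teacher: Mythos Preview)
Your proof is correct and follows the same overall strategy as the paper: use Corollary~\ref{cor:galoissplitring} and Proposition~\ref{prop:cdsplittingring} for the first two claims, invoke Proposition~\ref{prop:galoissplit} to see that $B$ splits $A$, and then extract a morphism $A^*\to B$ from the fact that the indecomposable factors of $A^{[d]}$ are all isomorphic to $A^*$. The one tactical difference is that you decompose $B$ into indecomposables first, so as to be able to apply Proposition~\ref{prop:An} (which requires the target to be indecomposable) to each $B_i$ separately, and then reassemble. The paper sidesteps this: from $F_B(A)\cong\1_B^{\times d}$ one gets $F_B(A^{[d]})\cong F_B(A)^{[d]}\cong(\1_B^{\times d})^{[d]}\cong\1_B^{\times d!}$, and any projection to $\1_B$ corresponds (via the adjunction bijection in the proof of Proposition~\ref{prop:An}, which does \emph{not} need $B$ indecomposable) to a ring morphism $A^{[d]}\to B$; precomposing with the diagonal $A^*\to(A^*)^{\times k}\cong A^{[d]}$ finishes. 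Your detour through the factors of $B$ is a perfectly valid way to honor the indecomposability hypothesis in Proposition~\ref{prop:An}, just slightly longer.
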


\begin{proof} 
	Corollary~\ref{cor:galoissplitring} and~Proposition~\ref{prop:cdsplittingring} show that $A^{*}$ is quasi-Galois in~$\K$ and $\supp(A)=\supp(A^{*})$.
	Suppose there is an $A$-algebra $B$ as above. By~Proposition~\ref{prop:galoissplit}, $B$ splits~$A$, so there exists a ring morphism $A^{[\deg(A)]} \rightarrow B$. The result now follows because $A^{[\deg(A)]}\cong A^{*}\times\ldots \times A^{*}$ by~Proposition~\ref{prop:cdsplittingring}.
\end{proof}

\begin{remark}
	By~Proposition~\ref{prop:galoissplit}, the assumption that $A$ has constant degree is necessary for the existence of a quasi-Galois closure $A^{*}$ of $A$ with $\supp(A)=\supp(A^{*})$. 
\end{remark}

\section{Some Modular Representation Theory}\label{sec:rep}

Let $G$ be a finite group and $\kk$ a field with characteristic $p>0$ dividing~$|G|$. We write $\mod G$ for the category of finitely generated left $\kk G$-modules. This category is nice, idempotent-complete and symmetric monoidal: the tensor is $\T_{\kk}$ with diagonal G-action, and the unit is the trivial representation~$\1 = \kk$. %Moreover, $\mod G$ is nice because it satisfies the Krull-Schmidt property.

We will also work in the bounded derived category $\Db G$ and stable category $\stab G$, which are nice tt-categories.
The spectrum $\Spc(\Db G)$ of the derived category is homeomorphic to the homogeneous spectrum $\Spec^{h}(H^{\bullet}(G,\kk))$ of the  graded-commutative cohomology ring $H^{\bullet}(G,\kk)$.
Accordingly, the spectrum $\Spc(\stab G)$ of the stable category is homeomorphic to the projective support variety $\V_G(\kk):=\Proj(H^{\bullet}(G,\kk))$, see~\cite{bcr}.

\begin{notation}
	Let $H\leq G$ be a subgroup.
	The $\kk G$-module $A_H=A^G_{H}:=\kk(G/H)$ is the free $\kk$-module with basis $G/H$ and left $G$-action given by~$g\cdot [x]=[gx]$ for every~$[x]\in G/H$. 
	The $\kk G$-linear map
	$\mu: A_{H} \T_{\kk} A_{H}\longrightarrow A_{H}$
	is given by
	$$\begin{tikzcd}
	\gamma\T \gamma'\arrow[mapsto]{r} 
	& \left\lbrace \begin{matrix} \gamma & \text{if } \gamma= \gamma'
	\\0 & \text{if } \gamma\neq \gamma'\end{matrix}\right.  
	\hspace{1cm} \text{for all $\gamma, \gamma' \in G/H$.} \end{tikzcd}$$
	We define $\eta:\1 \rightarrow A_{H}$ by sending 
	$1\in \kk$ to $\sum_{\gamma \in G/H}\gamma\in \kk(G/H)$.

	We will write $\K (G)$ to denote $\mod G$, $\Db G$ or $\stab G$ and consider the object $A_{H}$ in each of these categories. 
\end{notation} 

\begin{proposition}\label{prop:AH}\cite[Prop.3.16, Th.4.4]{bstacks}
	Let $H \leq G$  be a subgroup. Then, 
	\begin{enumerate}[label=(\alph*)]
		\item The triple $(A_{H},\mu, \eta)$ is a commutative separable ring object in $\K(G)$.

		\item  There is an equivalence of categories 
		$$ \Psi^G_H : \K(H) \xrightarrow{\simeq} \Mod{\K(G)}{A_{H}}$$
		sending $V\in \K(H)$ to 
		$\kk G \T_{\kk H}V \in \K(G)$ with $A_H$-action
		$$\varrho:\kk (G/H)\T_{\kk} (\kk G \T_{\kk H}V)
		\longrightarrow
		\kk G \T_{\kk H}V $$
		given for $\gamma\in G/H$, $g\in G$ and $v\in V$ by
		$\gamma\T g\T v 
		\longmapsto 
		 \left\lbrace \begin{matrix} g\T v & \text{if $g\in \gamma$} 
		\\0 & \text{if $g\notin \gamma$}\end{matrix}\right.$.

		\item The following diagram commutes up to isomorphism:
		$$\begin{tikzcd}[column sep=huge]
		& \K(G)
		\arrow{rd}{F_{A_{H}}}
		\arrow[swap]{ld}{\Res {G} {H}}\\
		\K(H) 
		\arrow{rr}{\Psi^G_H}[swap]{\simeq}
		& & \Mod{\K(G)}{A_{H}}.
		\end{tikzcd}$$
	\end{enumerate}
\end{proposition}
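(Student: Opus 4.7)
The plan for part~(a) is a direct verification of the ring axioms and separability on the basis $G/H$. Commutativity of $\mu$ and the unit property of $\eta$ are immediate from the defining formulas: $\mu$ only detects coincidence of basis elements, and $\eta(1)=\sum_{\gamma} \gamma$ picks out exactly one summand in each product $\mu(\eta(1)\T \gamma)$. For separability I would exhibit $\sigma : A_H \to A_H \T_{\kk} A_H$ defined on the basis by $\gamma \mapsto \gamma \T \gamma$; then $\mu \sigma = 1_{A_H}$ is tautological, and $A_H, A_H$-bilinearity reduces to a symmetric check that both $(\sigma\T 1)\mu$ and $(1\T\sigma)\mu$ send $\gamma\T\gamma$ to $\gamma\T\gamma\T\gamma$ and vanish on off-diagonal basis elements. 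All maps in sight are $\kk G$-linear because $G$ permutes $G/H$ by left translation, preserving the diagonal in $G/H \times G/H$ and the total sum of cosets.

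Part~(b) carries the real content, and the plan is to exhibit an explicit inverse~$\Phi$ to $\Psi^G_H$. For any $A_H$-module $(M,\varrho)$ in $\K(G)$, each coset $\gamma \in G/H$ gives an idempotent $e_\gamma := \varrho(\gamma \T -): M \to M$, and these are mutually orthogonal with $\sum_\gamma e_\gamma = 1_M$, so $M = \bigoplus_{\gamma \in G/H} \im(e_\gamma)$ as $\kk$-modules. Setting $e_H := e_{[H]}$, one uses that the stabiliser of $[H]$ under the $G$-action on $G/H$ is exactly $H$ to conclude that $\im(e_H)$ is preserved by~$H$, defining $\Phi(M) := \im(e_H)$ as an $H$-module. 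The natural isomorphisms $\Phi\Psi^G_H(V) \cong V$ (since $e_H$ cuts out the summand $\kk H \T_{\kk H} V \cong V$ inside $\kk G \T_{\kk H} V$) and $\Psi^G_H\Phi(M) \cong M$ (via $g \T m \mapsto g\cdot m$, an isomorphism by the decomposition above) then complete the equivalence at the level of $\mod G$. To descend to $\Db G$ and $\stab G$, one observes that $\Psi^G_H$ is essentially $\kk G \T_{\kk H} -$, which is exact (as $\kk G$ is $\kk H$-free) and preserves projective/injective modules, and that $\Phi$ is (restriction followed by) a summand projection, so both functors descend to bounded derived and stable categories.

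Part~(c) is a short diagram chase. For $W \in \K(G)$, the required natural isomorphism $\Psi^G_H \Res{G}{H}(W) = \kk G \T_{\kk H} W \xrightarrow{\sim} \kk(G/H) \T_{\kk} W = F_{A_H}(W)$ is the Mackey-style map $g \T w \mapsto [gH] \T gw$, whose well-definedness, $\kk G$-linearity, and compatibility with both $A_H$-actions are routine to unpack from the formulas in~(b).

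The main obstacle will sit inside part~(b): one must verify that $\im(e_H)$ is genuinely closed under the $H$-action, i.e.~that $h \cdot e_H = e_H \cdot h$ as $\kk$-linear endomorphisms of $M$ for every $h \in H$. This is not immediate from the ring structure on $A_H$ alone but follows from the $\kk G$-equivariance of $\varrho$ combined with the identity $h \cdot [H] = [H]$ in $G/H$. Once this symmetry is in place, the orthogonal idempotent decomposition does all the remaining work and the rest of~(b) and~(c) is bookkeeping.
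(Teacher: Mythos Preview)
The paper does not supply its own proof of this proposition; it is quoted verbatim from Balmer's \cite[Prop.~3.16, Th.~4.4]{bstacks} and used as a black box. Your proposal is therefore not competing with anything in the present paper, and as a self-contained argument it is essentially correct: the separability section $\sigma(\gamma)=\gamma\T\gamma$ is the standard one, the inverse functor $\Phi$ via the idempotent $e_H=\varrho([H]\T-)$ is exactly the construction Balmer uses, and your diagram chase in~(c) is the canonical isomorphism $\Ind G H\Res G H W\cong \kk(G/H)\T_\kk W$.

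One small expository slip: in part~(a) you write that ``$(\sigma\T 1)\mu$ and $(1\T\sigma)\mu$'' should send $\gamma\T\gamma$ to $\gamma\T\gamma\T\gamma$, but these composites do not type-check ($\mu$ lands in $A_H$, not $A_H\T A_H$). The bilinearity condition you want is $(\mu\T 1)(1\T\sigma)=\sigma\mu=(1\T\mu)(\sigma\T 1)$ as maps $A_H\T A_H\to A_H\T A_H$, and both sides send $\gamma\T\gamma\mapsto\gamma\T\gamma$ and kill off-diagonal basis elements. This is clearly what you intend; just straighten out the formulas before writing it up.
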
 

So, every subgroup $H\leq G$ provides an indecomposable separable ring $A_{H}$ in~$\K (G)$, along which extension-of-scalars becomes restriction to the subgroup. 

\begin{proposition}\label{prop:degreeDb}
	The ring $A_{H}$ has degree $[G:H]$ in $\mod G$ and $\Db G$.
\end{proposition}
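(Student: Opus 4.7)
The plan is to pull the computation back to the trivial subgroup via the restriction functor $\Res{G}{1} : \K(G) \rightarrow \K(1)$, which is additive and symmetric monoidal (and exact in the $\Db{G}$ case). By Proposition~\ref{prop:AH} (applied with $H = 1$), this functor coincides up to equivalence with extension of scalars along $A_1 = \kk G$. The key observation is that $\Res{G}{1}$ is \emph{conservative}: a $\kk G$-module (respectively, a bounded complex of $\kk G$-modules) is zero precisely when its underlying $\kk$-vector space (respectively, complex of $\kk$-vector spaces) is. I will also use that forgetting the $G$-action on $A_H = \kk(G/H)$ identifies the coordinatewise multiplication on the basis with the product $\kk$-algebra structure on $\kk^{[G:H]}$, so that
\[
\Res{G}{1}(A_H) \;\cong\; \kk^{\times [G:H]} \;=\; \1^{\times [G:H]} \quad \text{in } \K(1).
\]

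By Corollary~\ref{cor:1degree}, the right-hand side has degree exactly $[G:H]$ in $\K(1)$; in particular, $(\1^{\times [G:H]})^{[n]}$ is nonzero for $n \leq [G:H]$ and zero for $n > [G:H]$. Now apply Proposition~\ref{prop:degreeproperties}(b) to the additive monoidal functor $\Res{G}{1}$: for every $n \geq 0$,
\[
\Res{G}{1}(A_H^{[n]}) \;\cong\; \bigl(\Res{G}{1}(A_H)\bigr)^{[n]} \;\cong\; (\1^{\times [G:H]})^{[n]}.
\]
Conservativity of $\Res{G}{1}$ then forces $A_H^{[n]} \neq 0$ exactly when $n \leq [G:H]$, and hence $\deg_{\K(G)}(A_H) = [G:H]$ by the definition of degree.

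The only point requiring care is the conservativity of $\Res{G}{1}$, which follows immediately from the faithfulness of the forgetful functor on modules or bounded complexes. One might hope to short-cut this step by invoking Lemma~\ref{lem:degreeproperties} with $B = A_1$, but that would require $\supp(A_H) \subseteq \supp(A_1)$, and in $\Db{G}$ the support of $\kk G$ is generally a proper subset of $\Spc(\Db{G})$; so conservativity, rather than a containment of supports, is the right tool here.
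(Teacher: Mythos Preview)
Your argument is correct and is essentially the same approach the paper takes: use conservativity of the restriction $\Res{G}{1}$ together with Proposition~\ref{prop:degreeproperties}(b) and Corollary~\ref{cor:1degree} to reduce to computing $\deg(\1^{\times [G:H]})$ in $\K(1)$. The only difference is that the paper records this in one line for $\mod G$ and defers to \cite[Cor.~4.5]{bdegree} for $\Db G$, whereas you give a unified argument covering both cases by observing that $\Res{G}{1}$ is also conservative on $\Db G$; your closing remark about why Lemma~\ref{lem:degreeproperties} is not a substitute here is apt.
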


\begin{proof}
	Seeing how the fiber functor $\Res G {\{1\}}$ is conservative, we get
	$$\deg_{\mod G}(A_H)=\deg_{\mod {}}(\Res G {\{1\}}(A_H))=[G:H].$$
	The degree of $A_H$ in $\Db G$ is computed in ~\cite[Cor.4.5]{bdegree}.
\end{proof}

\begin{lemma}\label{lem:suppAH} %no reference?
	Let $\K(G)$ denote $\Db G$ or $\stab G$ and consider subgroups $K\leq H\leq G$. Then $\supp (A_H)=\supp(A_K)\subset\Spc(\K(G))$ if and only if every elementary abelian $p$-subgroup of $H$ is conjugate in $G$ to a subgroup of~$K$.
\end{lemma}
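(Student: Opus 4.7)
The plan is to express both $\supp(A_H)$ and $\supp(A_K)$ as unions of closed subvarieties of $\Spc(\K(G))$ indexed by elementary abelian $p$-subgroups, and then compare. The key input will be Quillen stratification.

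First, I would use Proposition~\ref{prop:AH}(c) to identify $F_{A_H}$ with the restriction functor $\Res{G}{H}$, and then Theorem~\ref{th:f_Acoequalizer} to see that $\supp(A_H) \subseteq \Spc(\K(G))$ is precisely the image of $\Spc(\Res{G}{H}): \Spc(\K(H)) \to \Spc(\K(G))$. Under the homeomorphisms with $\Spec^{h} H^{\bullet}(-,\kk)$ (for $\operatorname{D}^b$) and $\V_{-}(\kk)$ (for $\stab{-}$), this map corresponds to the morphism of spectra induced by the cohomology restriction $H^{\bullet}(G,\kk) \to H^{\bullet}(H,\kk)$. Write $\rho_{E,G}$ for the analogous morphism from $\V_E(\kk)$ (or $\Spec^{h} H^{\bullet}(E,\kk)$) to $\V_G(\kk)$ attached to an elementary abelian subgroup $E$ of $G$.

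Next I would invoke Quillen stratification: $\V_H(\kk)$ is the union of the images of $\V_E(\kk)$ as $E$ ranges over elementary abelian $p$-subgroups of $H$. Combining with the previous step gives
$$\supp(A_H) \;=\; \bigcup_{E \leq H \text{ elem.\ abelian}} \rho_{E,G}\bigl(\V_E(\kk)\bigr),$$
and similarly for $\supp(A_K)$. With this in hand, the backward implication is immediate: if every elementary abelian $E \leq H$ is $G$-conjugate to some $E^{g} \leq K$, then $\rho_{E,G}(\V_E(\kk)) = \rho_{E^{g},G}(\V_{E^{g}}(\kk)) \subseteq \supp(A_K)$, giving $\supp(A_H) \subseteq \supp(A_K)$; the reverse containment holds automatically from $K \leq H$ because then the index set for $\supp(A_K)$ is already a subset of that for $\supp(A_H)$.

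The forward implication carries the real content. Assuming $\supp(A_H) = \supp(A_K)$ and fixing an elementary abelian $E \leq H$, the set $\rho_{E,G}(\V_E(\kk))$ sits in the finite union $\bigcup_{E' \leq K}\rho_{E',G}(\V_{E'}(\kk))$. Since $\V_E(\kk)$ is irreducible (being affine or projective space) and $\rho_{E,G}$ is a finite morphism by Evens' theorem, the image is an irreducible closed subvariety, so it must lie entirely in a single summand $\rho_{E',G}(\V_{E'}(\kk))$ for some elementary abelian $E' \leq K$. I would then appeal to the strong (stratified) form of Quillen's theorem --- different $G$-conjugacy classes of elementary abelians contribute disjoint strata, and $\rho_{E',G}(\V_{E'}(\kk))$ is exactly the union of those strata attached to $G$-conjugacy classes of subgroups of $E'$ --- to conclude that $E$ is $G$-conjugate to a subgroup of $E'$, hence of $K$. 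The main obstacle is precisely this last step: it requires Quillen stratification not just as a covering statement but in its full injective form; modulo that input, the argument is formal bookkeeping around Proposition~\ref{prop:AH}(c) and Theorem~\ref{th:f_Acoequalizer}.
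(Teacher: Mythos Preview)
Your proposal is correct and follows essentially the same route as the paper: both arguments identify $\supp(A_H)$ with the image of $\Spc(\K(H))$ under the map induced by restriction, and then invoke Quillen stratification (the paper via the reference to \cite[Th.9.1.3]{evens}) to decompose this image into pieces indexed by $G$-conjugacy classes of elementary abelian $p$-subgroups of~$H$. The paper's proof is a one-line citation, whereas you have unpacked the forward implication via irreducibility and the stratified form of Quillen's theorem; this is exactly what the Evens reference supplies.
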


\begin{proof}
	This follows from~\cite[Th.9.1.3 ]{evens}, seeing how
	% Quillen stratification: 
	$\supp(A_H)=(\Res G H)^{*}(\Spc(\K(H)))$ can be written as a union of disjoint pieces coming from conjugacy classes in $G$ of elementary abelian $p$-subgroups of~$H$.
	%See~\cite{quillen} , for instance. 
\end{proof}

\begin{notation}
	For any two subgroups $H,K\leq G$, we write $_{H}[g]_{K}$ for the equivalence class of $g\in G$ in $H\backslash G /K$, just $[g]$ if the context is clear. We will write $H^{g}:=g^{-1}H g$ for the conjugate subgroups of~$H$.
\end{notation}

\begin{remark}\label{rem:mackey}
	Let $H,K\leq G$ be subgroups and choose a complete set $T\subset G$ of representatives for $H\backslash G/K$. Consider the Mackey isomorphism of $G$-sets,
	$$\coprod_{g\in T} G/(K \cap H^g)\xrightarrow{\cong} G/K \times G/H,$$
	sending $[x]\in G/(K \cap H^g)$ to $([x]_{K},[xg^{-1}]_H)$.
	The corresponding ring isomorphism %(see~\cite[Constr. 4.1]{bquillen}),
	$$\tau: A_{K} \T A_{H}  \xrightarrow{\cong}\prod_{g\in T} A_{K \cap H^g},$$
	in~$\K(G)$, is given for $g\in T$ and $x,y\in G$ by 
	$$ \pr_g \tau \: ([x]_K\T [y]_H)
	= \left\lbrace \begin{matrix} [xk]_{K\cap H^g} 
	& \text{if } _{H}[g]_{K}={}_{H}[y^{-1}x]_{K} 
	\\0 & \text{otherwise }, \end{matrix}\right.  $$
	with $k\in K $ such that $y^{-1}x kg^{-1} \in H$.
	This yields an $A_K$-algebra structure on $A_{K \cap H^t}$ for every $t\in T$, given by
	$$A_K\xrightarrow{1\T\eta} A_{K} \T A_{H}  \cong\prod_{g\in T} A_{K \cap H^g}\xrightarrow{\pr_{t}} A_{K \cap H^t},$$
	which sends $[x]_K\in G/K$ to $\sum\limits_{\substack{[k]\in K/K\cap H^t}}[xk]_{K\cap H^t}\in A_{K \cap H^t}$.
	In the notation of~Proposition~\ref{prop:AH}(b), this means
	$F_{A_K}(A_H)\cong \prod_{g\in T}\Psi^G_K(A^K_{K\cap H^g})$ in $\Mod{\K(G)}{A_{K}}$.
\end{remark}

\begin{lemma}\label{lem:cosets}
	Let $H<G$. Suppose $x,g_1, g_2, \ldots, g_n\in G$ and $1\leq i\leq n$. Then
	$${}_{H}[x]_{H\cap H^{g_1}\cap\ldots\cap H^{g_n}}={}_{H}[g_i]_{H\cap H^{g_1}\cap\ldots\cap H^{g_n}}$$ if and only if $_{H}[x]=  {}_{H}[g_i]$.
\end{lemma}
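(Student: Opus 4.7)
The plan is to dispatch this by unwinding the double coset equality and exploiting the fact that $g_i$ specifically appears among the conjugating elements defining the subgroup on the right. One direction is formal, so the real content is only in the ``only if'' direction.

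For the easy direction, if $_{H}[x]={}_{H}[g_i]$ in $H\backslash G$, then $x=hg_i$ for some $h\in H$, and so certainly $HxL=Hg_iL$ for $L:=H\cap H^{g_1}\cap\ldots\cap H^{g_n}$, giving ${}_{H}[x]_{L}={}_{H}[g_i]_{L}$.

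For the converse, suppose ${}_{H}[x]_{L}={}_{H}[g_i]_{L}$, so that $x=hg_il$ for some $h\in H$ and $l\in L$. The crucial observation is that $L\subseteq H^{g_i}=g_i^{-1}Hg_i$ by definition of $L$ (it is one of the factors in the intersection), so $g_ilg_i^{-1}\in H$. Rewriting
\[x\;=\;h\,g_i\,l\;=\;h\,(g_ilg_i^{-1})\,g_i,\]
we see that $x$ lies in $Hg_i$, since $h(g_ilg_i^{-1})\in H$. Hence ${}_{H}[x]={}_{H}[g_i]$, as desired.

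There is no real obstacle here; the only thing to notice is that the hypothesis ``$g_i$ is one of the $g_1,\ldots,g_n$'' is exactly what guarantees $L\subseteq H^{g_i}$, which in turn is exactly what is needed to push the element $l$ past $g_i$ into $H$. The argument is symmetric in the sense that it uses only the $i$th factor of the intersection, and the remaining factors play no role.
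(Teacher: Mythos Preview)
Your proof is correct and follows essentially the same idea as the paper's. The paper first reduces to the special case where the right-hand subgroup is $H^{g_i}$ (noting that $L\subseteq H^{g_i}$ makes the reduction immediate) and then performs exactly your computation $x=hy(y^{-1}h'y)=hh'y$; you skip the reduction and do the computation directly with $l\in L$, which is marginally more streamlined but not substantively different.
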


\begin{proof}
	It suffices to prove that for $x,y\in G$,
	we have $_{H}[x]_{H^y}= {}_{H}[y]_{H^y}$ if and only if $_{H}[x]=  {}_{H}[y]$.
	This follows because for $[x]=[y]$ in $H\backslash G / H^y$, there are $h,h'\in H$ with $x=hy(y^{-1}h'y)=hh'y$. 
\end{proof}

\begin{notation}
	We fix a subgroup $H<G$ and a complete set $S\subset G$ of representatives for $H\backslash G/H$.
	Likewise, if $g_1, g_2, \ldots, g_n \in G$ we will write $S_{g_1, g_2, \ldots, g_n}\subset G$ to denote some complete set of representatives for
	$H\backslash G /H\cap H^{g_1}\cap\ldots\cap H^{g_n}$.
	
	Recall that $\K (G)$ can denote $\mod G$, $\Db G$ or $\stab G$.
\end{notation}

\begin{lemma}\label{lem:AHn}
	Let $1\leq n < [G:H]$. There is an isomorphism of rings 
	$$A_H^{[n+1]}\cong 
	\prod_{g_1,\ldots,g_n} 
	A_{{H\cap H^{g_1}\cap\ldots\cap H^{g_{n}}}}$$
	in $\K(G)$, where the product runs over all $g_1\in S$ and $g_{i}\in S_{g_{1}, \ldots, g_{i-1}}$ for $2\leq i\leq n$ with ${}_H \! [1],{}_H \! [g_1],\ldots, {}_H \! [g_{n}]$ distinct in $H\backslash G$.
\end{lemma}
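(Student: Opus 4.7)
The plan is to argue by induction on $n$. For the base case $n=1$, the Mackey decomposition of Remark~\ref{rem:mackey} with $K=H$ yields
$$A_H\T A_H\cong \prod_{g_1\in S}A_{H\cap H^{g_1}}$$
as rings in $\K(G)$, with the $g_1=1$ component giving $A_{H\cap H}=A_H$. Since Proposition~\ref{prop:degreeproperties}(a) gives $A_H\T A_H\cong A_H\times A_H^{[2]}$, I would invoke uniqueness of decomposition (Proposition~\ref{prop:uniquedecomposition}) to identify $A_H^{[2]}$ with $\prod_{g_1\in S,\,{}_H[g_1]\neq{}_H[1]}A_{H\cap H^{g_1}}$.

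For the inductive step, assuming the formula for $n$, I would substitute it into $A_H^{[n]}\T A_H$ and apply Mackey to each factor separately. For $K=H\cap H^{g_1}\cap\ldots\cap H^{g_{n-1}}$, Mackey gives $A_K\T A_H\cong \prod_{g_n\in S_{g_1,\ldots,g_{n-1}}} A_{K\cap H^{g_n}}$, so taking the product over the admissible $(g_1,\ldots,g_{n-1})$ produces
$$A_H^{[n]}\T A_H\cong \prod_{(g_1,\ldots,g_n)} A_{H\cap H^{g_1}\cap\ldots\cap H^{g_n}}$$
in $\K(G)$, where $g_n$ ranges freely over $S_{g_1,\ldots,g_{n-1}}$ with no distinctness condition yet. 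Meanwhile, Proposition~\ref{prop:degreeproperties}(a) supplies the complementary decomposition $A_H^{[n]}\T A_H\cong (A_H^{[n]})^{\times n}\times A_H^{[n+1]}$, and the task is to match these two expressions.

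I would then partition the index tuples according to whether ${}_H[g_n]$ coincides with some ${}_H[g_i]$ for $i\in\{0,1,\ldots,n-1\}$, setting $g_0:=1$. Lemma~\ref{lem:cosets} ensures that for each such $i$ there is exactly one $g_n\in S_{g_1,\ldots,g_{n-1}}$ with ${}_H[g_n]={}_H[g_i]$; moreover, ${}_H[g_n]={}_H[g_i]$ forces $H^{g_n}=H^{g_i}$, so the corresponding intersection collapses to $K$. Summing over the $n$ choices of $i$ and over all admissible $(g_1,\ldots,g_{n-1})$, these \emph{degenerate} tuples contribute $n$ copies of each $A_K$, totalling $(A_H^{[n]})^{\times n}$ by the inductive hypothesis. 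The main obstacle I anticipate is invoking Proposition~\ref{prop:uniquedecomposition} cleanly to peel this off and conclude that the complementary tuples---those with ${}_H[g_0],{}_H[g_1],\ldots,{}_H[g_n]$ distinct in $H\backslash G$---assemble into $A_H^{[n+1]}$; this hinges on the indecomposability of each ring factor $A_{H\cap H^{g_1}\cap\ldots\cap H^{g_n}}$ in $\K(G)$, which in $\mod G$ and $\Db G$ follows from transitivity of the $G$-action on $G/K$ and should likewise hold in $\stab G$ for the subgroups appearing here.
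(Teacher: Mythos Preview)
Your proposal is correct and follows essentially the same route as the paper's proof: induction on $n$, Mackey (Remark~\ref{rem:mackey}) applied to each factor of $A_H^{[n]}\T A_H$, Lemma~\ref{lem:cosets} to count the $n$ degenerate choices of $g_n$, and Proposition~\ref{prop:degreeproperties}(a) together with Proposition~\ref{prop:uniquedecomposition} to peel off $(A_H^{[n]})^{\times n}$. The obstacle you anticipate is one the paper also leaves implicit; it dissolves because every nonzero $A_K$ is indecomposable as a ring in $\K(G)$---in $\stab G$ this holds since $\Mod{\stab G}{A_K}\simeq\stab K$ and $\underline{\End}_{\kk K}(\kk)=\kk$ has no nontrivial idempotents---so Proposition~\ref{prop:uniquedecomposition} applies cleanly throughout.
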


\begin{proof}
	By Remark~\ref{rem:mackey}, we know that
	$$ A_{H} \T A_{H}  \cong
	\prod_{g\in S} A_{H\cap H^g}
	= A_{H}\times 
	\prod_{\substack{g\in S\\ {}_H \! [g]\neq {}_H \! [1]}} A_{H\cap H^g}$$
	so~Proposition~\ref{prop:uniquedecomposition} shows
	$A_H^{[2]}\cong 
	\prod\limits_{\substack{g\in S\\ {}_H \! [g]\neq {}_H \! [1]}} A_{H\cap H^g}$ in $\K(G)$.
	Now suppose 
	$$A_H^{[n]}\cong 
	\prod_{g_1,\ldots,g_{n-1}}
	A_{{H\cap H^{g_{1}}\cap\ldots\cap H^{g_{n-1}}}}$$
	for some $1\leq n<[G:H]$,
	where the product runs over all $g_1\in S$ and $g_{i}\in S_{g_{1}, \ldots, g_{i-1}}$ for $2\leq i\leq n-1$ with ${}_H \! [1],{}_H \! [g_1],\ldots, {}_H \! [g_{n-1}]$ distinct in $H\backslash G$.
	Then 
	\begin{eqnarray*}
	A_{H}^{[n]} \T A_{H} 
	 \cong
	\prod_{g_1,\ldots,g_{n-1}}
	A_{{H\cap H^{g_1}\cap\ldots\cap H^{g_{n-1}}}}\T A_H
	\cong
	\prod_{g_1,\ldots,g_{n-1}}\;\;
	\prod_{g_{n}\in S_{g_1,\ldots,g_{n-1}}}
	A_{H\cap H^{g_1}\cap\ldots\cap H^{g_{n}}},
	\end{eqnarray*}
	again by Remark~\ref{rem:mackey}.
	We note that every  $g_n\in S_{g_1,\ldots,g_{n-1}}$ with $_H[g_n]={}_H[1]$ or 	$_H[g_n]={}_H[g_i]$ for $1\leq i\leq n-1$ provides a copy of $A_{H}^{[n]}$.
	By~Lemma~\ref{lem:cosets}, this happens exactly $n$ times.
	Hence,
	$$A_{H}^{[n]} \T A_{H} 
	\cong
	\left( A_{H}^{[n]}\right)^{\times n} \times
	\prod_{g_1,\ldots,g_n} 
	A_{H\cap H^{g_1}\cap\ldots\cap H^{g_{n}}},$$
	where the product runs over all $g_1\in S$ and $g_{i}\in S_{g_{1}, \ldots, g_{i-1}}$ for $2\leq i\leq n$ with ${}_H \! [1],{}_H \! [g_1],\ldots, {}_H \! [g_{n}]$ distinct in $H\backslash G$.
	The lemma follows by~Proposition~\ref{prop:degreeproperties}(a).
\end{proof}

\begin{corollary}\label{cor:AHd}
	Let $d:=[G:H]$.
	There is an isomorphism of rings 
	$$A_H^{[d]}\cong
	\left( A_{\norm H}\right)^{\times \frac{d!}{[G:\norm H]}}$$
	in $\mod G$ and $\Db G$. 
	Here, $\norm H:=\bigcap\limits_{\substack{g\in G}}g^{-1}Hg$ is the normal core of~$H$ in~$G$. 
\end{corollary}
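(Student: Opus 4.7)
The plan is to apply Lemma~\ref{lem:AHn} at the top index $n=d-1$ (where $d=[G:H]$), identify each factor in the resulting product as $A_{\norm H}$, and then pin down the number of factors by a $\kk$-dimension count via a fiber functor.

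Specializing Lemma~\ref{lem:AHn} to $n=d-1$ yields a ring isomorphism
$$A_H^{[d]}\;\cong\;\prod_{(g_1,\ldots,g_{d-1})}A_{H\cap H^{g_1}\cap\ldots\cap H^{g_{d-1}}}$$
in $\K(G)$, with the product running over tuples such that ${}_H[1],{}_H[g_1],\ldots,{}_H[g_{d-1}]$ are distinct in $H\backslash G$. Since $|H\backslash G|=d$, any such tuple exhausts the right cosets of $H$ in~$G$. Because $H^g=g^{-1}Hg$ depends only on the right coset $Hg$ (for $h\in H$ one has $H^{hg}=g^{-1}h^{-1}Hhg=g^{-1}Hg=H^g$), this exhaustion forces
$$H\cap H^{g_1}\cap\ldots\cap H^{g_{d-1}}\;=\;\bigcap_{[g]\in H\backslash G}H^g\;=\;\bigcap_{g\in G}g^{-1}Hg\;=\;\norm H$$
for every tuple in the index set. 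Hence $A_H^{[d]}\cong A_{\norm H}^{\times m}$ in $\K(G)$ for some integer $m\geq 1$, and it only remains to determine~$m$.

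For this, apply the restriction $\Res G{\{1\}}:\K(G)\to\K(\{1\})$, which is additive and monoidal (in both $\mod G$ and $\Db G$, which is also why $\stab G$ is excluded from the statement), and sends $A_K$ to $\1^{\times[G:K]}$ for every subgroup $K\leq G$. By~Proposition~\ref{prop:degreeproperties}(b) this functor commutes with the splitting tower, so
$$\Res G{\{1\}}(A_H^{[d]})\;\cong\;\Res G{\{1\}}(A_H)^{[d]}\;\cong\;(\1^{\times d})^{[d]}\;\cong\;\1^{\times d!}$$
by Corollary~\ref{cor:1degree}, while $\Res G{\{1\}}(A_{\norm H}^{\times m})\cong \1^{\times m[G:\norm H]}$. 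Comparing yields $m=d!/[G:\norm H]$, finishing the proof. The only non-routine observation is that $H^g$ depends solely on $Hg$, which is what collapses all the intersections to a single copy of $\norm H$; the count is then forced by the fiber-functor dimension argument.
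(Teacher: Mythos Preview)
Your proof is correct and follows essentially the same route as the paper: apply Lemma~\ref{lem:AHn} at $n=d-1$, observe that each factor equals $A_{\norm H}$, and then determine the multiplicity. The only difference is in the counting step---the paper computes degrees directly in $\K(G)$ (using $\deg(A_H^{[d]})=d!$ from Remark~\ref{rem:Adsplits} and $\deg(A_{\norm H}^{\times t})=t\,[G:\norm H]$ from Lemma~\ref{cor:1factor}(c)) rather than passing through the fiber functor $\Res{G}{\{1\}}$, but both arguments are equally valid and equally short.
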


\begin{proof}
	From the above lemma, we know 
	$A_H^{[d]}\cong
	\prod\limits_{\substack{g_1,\ldots,g_{d-1}}} 
	A_{{H\cap H^{g_1}\cap\ldots\cap H^{g_{d-1}}}}$,
	where the product runs over some $g_1,\dots g_{d-1}\in G$ with $\left\lbrace {}_H \! [1],{}_H \! [g_1],\ldots, {}_H \! [g_{d-1}]\right\rbrace =~H\backslash G$.
	This shows 
	$A_H^{[d]}\cong A_{\norm H}^{\times t}$
	for some $t\geq 1$.
	Now, $\deg(A_{\norm H})=[G:\norm H]$ and $\deg(A_H^{[d]}) =d!$ by Remark~\ref{rem:Adsplits}, so
	$t=\frac{d!}{[G:\norm H]}$
	by~Lemma~\ref{cor:1factor}(c).
\end{proof}

\begin{corollary}\label{cor:Dbqg}
	The ring $A_H$ in $\Db G$ has constant degree $[G:H]$ if and only if $\norm H$ contains every elementary abelian $p$-subgroup of~$H$. 
	In that case, 
	its quasi-Galois closure is $A_{\norm H}$. 
	Furthermore, $A_H$ is quasi-Galois in $\Db G$ if and only if $H$ is normal in~$G$.  
\end{corollary}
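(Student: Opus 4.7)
The plan is to establish the three assertions of the corollary by assembling results from earlier in the paper, one for each clause.

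For the constant-degree equivalence, I would invoke Proposition~\ref{prop:degreeDb}, which gives $d := \deg_{\Db G}(A_H) = [G:H]$, together with the lemma at the start of the rings-of-constant-degree section characterizing constant degree by $\supp(A^{[d]}) = \supp(A)$. Corollary~\ref{cor:AHd} identifies $A_H^{[d]}$ with a power of $A_{\norm H}$, so $\supp(A_H^{[d]}) = \supp(A_{\norm H})$, and Lemma~\ref{lem:suppAH} (applied to $\norm H \leq H$) translates $\supp(A_{\norm H}) = \supp(A_H)$ into the condition that every elementary abelian $p$-subgroup of $H$ is $G$-conjugate into $\norm H$. Since $\norm H$ is normal in $G$, being $G$-conjugate into $\norm H$ is equivalent to being contained in $\norm H$, giving the stated criterion.

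To identify the quasi-Galois closure under the constant-degree hypothesis, I would apply Theorem~\ref{th:splittingring}. Its hypothesis that $\Spc(\M{A_H})$ is connected holds because $\M{A_H} \simeq \Db H$ by Proposition~\ref{prop:AH}(b), and $\Spc(\Db H) \cong \Spec^{h} H^{\bullet}(H,\kk)$ is the homogeneous spectrum of a graded-connected $\kk$-algebra, hence connected. The splitting ring of $A_H$ is then, by Proposition~\ref{prop:splittingring}, the unique indecomposable ring factor of $A_H^{[d]}$, which by Corollary~\ref{cor:AHd} is $A_{\norm H}$.

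For the last equivalence I would appeal to Theorem~\ref{th:galoisequivalences}: $A_H$ is quasi-Galois in $\Db G$ iff $F_{A_H}(A_H) \cong \1_{A_H}^{\times t}$ in $\M{A_H}$ for some $t > 0$. Passing through $\Psi^G_H$ (which sends $\1_{A_H}$ to the trivial $\kk H$-module $\kk$), the condition becomes $\Res{G}{H}\kk(G/H) \cong \kk^{\oplus t}$ in $\Db H$, and the Mackey isomorphism of Remark~\ref{rem:mackey} with $K = H$ makes this $\bigoplus_{g \in S} \kk(H/(H \cap H^g)) \cong \kk^{\oplus t}$. A sum of transitive $\kk H$-permutation modules is trivial iff each summand is trivial, which forces $H \cap H^g = H$ for every $g$, equivalently $H = H^g$ for every $g \in G$, i.e.\ $H$ is normal in $G$; the converse direction follows immediately from the same Mackey computation.

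The chain is routine once the ingredients are in place. The only external input is the connectivity of $\Spc(\Db H)$, and the one point that needs care is tracking, through $\Psi^G_H$, that $\1_{A_H}^{\times t}$ corresponds to the trivial $\kk H$-module $\kk^{\oplus t}$ on the $\Db H$ side, so that the quasi-Galois condition translates cleanly into triviality of a permutation module.
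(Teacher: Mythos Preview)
Your argument is correct. For the first two clauses you do exactly what the paper does, only spelling out the intermediate steps (the paper compresses the first clause to ``follows immediately from Lemma~\ref{lem:suppAH} and Corollary~\ref{cor:AHd}'' and handles the second clause by the same appeal to Proposition~\ref{prop:splittingring} and Theorem~\ref{th:splittingring}). Your extra remarks---that normality of $\norm H$ converts ``$G$-conjugate into $\norm H$'' to ``contained in $\norm H$'', and that connectivity of $\Spc(\M{A_H})\cong\Spc(\Db H)$ must be checked---are details the paper leaves implicit.

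The third clause is where you diverge. The paper argues via the splitting-ring machinery already in hand: an indecomposable ring is quasi-Galois iff it is its own splitting ring (Corollary~\ref{cor:galoissplitring}), the unique splitting ring of $A_H$ is $A_{\norm H}$ (Proposition~\ref{prop:splittingring} and Corollary~\ref{cor:AHd}), so $A_H$ is quasi-Galois iff $A_H\cong A_{\norm H}$, and comparing degrees forces $[G:H]=[G:\norm H]$, hence $H=\norm H$. Your route instead unwinds criterion~(ii) of Theorem~\ref{th:galoisequivalences} through the equivalence $\Psi^G_H$ and the Mackey decomposition of Remark~\ref{rem:mackey}, reducing to the elementary fact that $\kk(H/(H\cap H^g))$ is a trivial $\kk H$-module only when $H\cap H^g=H$. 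The paper's approach is shorter because it recycles the computation of $A_H^{[d]}$ already done for the second clause; yours is more self-contained and would work even without the splitting-ring identification, at the cost of invoking Krull--Schmidt in $\Db H$ to isolate each Mackey summand.
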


\begin{proof}
	The first statement follows immediately from~Lemma~\ref{lem:suppAH} and~Corollary~\ref{cor:AHd}. By~Proposition~\ref{prop:splittingring}, the splitting ring of $A_H$ is $A_{\norm H}$, so the second statement is~Theorem~\ref{th:splittingring}.
	Since $A_H$ is an indecomposable ring, it is quasi-Galois if and only if it is its own splitting ring. 
	Hence $A_H$ is quasi-Galois if and only if $A_{\norm H}\cong A_H$, which yields $\norm H= H$ by comparing degrees. 
\end{proof}

\begin{remark}\label{rem:kgstab}
	Let $H\leq G$ be a subgroup. Recall that $A_H\cong 0$ in $\stab G$ if and only if $p$ does not divide $|H|$. On the other hand, $A_H\cong \kk$ in $\stab G$ if and only if $H$ is strongly $p$-embedded in $G$, that is $p$ divides $|H|$ and $p$ does not divide $|H\cap H^g|$ if $g\in G- H$.
\end{remark}

\begin{proposition}\label{prop:stabqg}
	Let $H\leq G$ and consider the ring $A_H$ in $\stab G$. Then,
	\begin{enumerate}[label=(\alph*)]
		\item The degree of $A_H$ is the greatest $0\leq n\leq [G:H]$ such that there exist distinct $[g_1], \ldots, [g_n]$ in $H\backslash G$ with $p$ dividing $|H^{g_1}\cap\ldots\cap H^{g_n}|$. 

		\item The ring $A_H$ is quasi-Galois if and only if $p$ divides $|H|$ and $p$ does not divide $\lvert H\cap H^g \cap H^{gh}\rvert$ whenever $g\in G- H$ and $h\in H- H^g$.

		\item If $A_H$ has degree $n$, the degree is constant if and only if there exist distinct $[g_1], \ldots, [g_n]$ in $H\backslash G$ such that $H^{g_1}\cap\ldots\cap H^{g_{n}}$ contains a $G$-conjugate of every elementary abelian $p$-subgroup of~$H$.\\
		In that case, $A_H$ has quasi-Galois closure given by $A_{H^{g_1}\cap\ldots\cap H^{g_{n}}}$.
	\end{enumerate}
\end{proposition}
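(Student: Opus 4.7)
The plan is to prove the three parts in turn, reducing each claim to a group-theoretic statement about iterated intersections of conjugates of $H$ via Lemma~\ref{lem:AHn}, and tracking which factors vanish in $\stab G$ or $\stab H$ via Remark~\ref{rem:kgstab}.

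For (a), Remark~\ref{rem:kgstab} says that $A_K\cong 0$ in $\stab G$ exactly when $p\nmid|K|$. Combined with the product decomposition of $A_H^{[n+1]}$ from Lemma~\ref{lem:AHn}, one gets $A_H^{[n+1]}\neq 0$ in $\stab G$ if and only if at least one iterated intersection $H\cap H^{g_1}\cap\ldots\cap H^{g_n}$, ranging over $(n+1)$-tuples of distinct cosets $[1],[g_1],\ldots,[g_n]\in H\backslash G$, has order divisible by $p$. Right multiplication by a fixed $g_0^{-1}$ is a bijection of $H\backslash G$ which, together with the corresponding global conjugation of subgroups, preserves the $p$-divisibility of the orders of such intersections; this removes the normalization that $[1]$ appear among the cosets and produces the formula of~(a).

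For (b), Remark~\ref{rem:kgstab} forces $p\mid|H|$ as soon as $A_H\neq 0$, giving the first half of the condition. Assuming $A_H$ is indecomposable (as is the case under the mild hypothesis that $\Spc(\stab H)$ is connected), Theorem~\ref{th:galoisequivalences} makes $A_H$ quasi-Galois equivalent to $F_{A_H}(A_H)\cong\1_{A_H}^{\times t}$, i.e.\ to $A_H^{[2]}\cong\1_{A_H}^{\times (t-1)}$ by Proposition~\ref{prop:degreeproperties}(a). Lemma~\ref{lem:AHn} with $n=1$ identifies $A_H^{[2]}$ with $\prod_{[g]\neq[1]}A_{H\cap H^g}$, and the equivalence $\stab H\simeq\M{A_H}$ of Proposition~\ref{prop:AH} turns this into $\prod_{[g]\neq[1]}A^H_{H\cap H^g}$ in $\stab H$. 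Each factor must be $\cong 0$ or $\cong\kk$ in $\stab H$; by Remark~\ref{rem:kgstab} applied inside $\stab H$, this holds iff either $p\nmid|H\cap H^g|$, or $H\cap H^g$ is strongly $p$-embedded in $H$. Using the identity $(H\cap H^g)^h=H\cap H^{gh}$ for $h\in H$, both cases collapse into the single requirement $p\nmid|H\cap H^g\cap H^{gh}|$ for all $g\in G-H$ and $h\in H-H^g$, which proves~(b).

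For (c), $A_H$ has constant degree if and only if $\supp(A_H^{[n]})=\supp(A_H)$. Lemma~\ref{lem:AHn} (after the coset shift used in (a)) exhibits $A_H^{[n]}$ as a product of rings $A_{H^{g_1}\cap\ldots\cap H^{g_n}}$ over $n$-tuples of distinct cosets in $H\backslash G$, and Lemma~\ref{lem:suppAH} identifies $\supp(A_{H^{g_1}\cap\ldots\cap H^{g_n}})=\supp(A_H)$ with the requirement that $H^{g_1}\cap\ldots\cap H^{g_n}$ contain a $G$-conjugate of every elementary abelian $p$-subgroup of $H$; this immediately yields the ``if'' direction. For the ``only if'' direction, the key input is that at a prime $\p\in\supp(A_H)$ in the Quillen stratum of an elementary abelian $p$-subgroup $E\leq H$, the local degree of $A_H$ equals $|\Omega_E|$, where $\Omega_E:=\{[g]\in H\backslash G:gEg^{-1}\leq H\}$; constant degree then forces $|\Omega_E|=n$ for every such $E$, and choosing $\{[g_1],\ldots,[g_n]\}=\Omega_{E^*}$ for a maximal elementary abelian $p$-subgroup $E^*\leq H$ should produce a single intersection containing a $G$-conjugate of every elementary abelian $p$-subgroup of $H$. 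Theorem~\ref{th:splittingring} together with Proposition~\ref{prop:splittingring} then identifies the quasi-Galois closure of $A_H$ as an indecomposable ring factor of $A_H^{[n]}$, which by the decomposition above is precisely $A_{H^{g_1}\cap\ldots\cap H^{g_n}}$. The main technical obstacle is this ``only if'' direction: turning the coset-by-coset information $|\Omega_E|=n$ (indexed by conjugacy classes of elementary abelian $p$-subgroups of $H$) into a \emph{single} choice of $n$ cosets whose associated intersection simultaneously covers every such conjugacy class, for which the Quillen-stratification viewpoint and the maximality argument above will be essential.
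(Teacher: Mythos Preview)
Your arguments for (a) and (b) are essentially the paper's own, including the reduction via Lemma~\ref{lem:AHn} and Remark~\ref{rem:mackey} to the stable category of $H$ and the use of Remark~\ref{rem:kgstab} to translate ``each factor is $0$ or $\kk$'' into the divisibility condition. The ``if'' direction of (c) also matches the paper.

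The gap is in the ``only if'' direction of (c). You correctly sense an obstacle, but the Quillen-stratification approach you sketch is both incomplete (``should produce'') and unnecessary. Even granting your local-degree formula $|\Omega_E|$, knowing that $|\Omega_E|=n$ for every elementary abelian $E\leq H$ does \emph{not} obviously yield a single $n$-tuple of cosets whose intersection catches a $G$-conjugate of every such $E$: different $E$'s may have different $\Omega_E$'s, and maximality of $E^*$ gives no control over elementary abelian subgroups not $G$-conjugate into $E^*$. The paper avoids this entirely by invoking Proposition~\ref{prop:cdsplittingring}: under constant degree and connected support, the splitting ring is \emph{unique}, so every nonzero indecomposable factor of $A_H^{[n]}$ is isomorphic to $A_H^*$ and has $\supp(A_H^*)=\supp(A_H)$. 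Hence \emph{any} distinct $[g_1],\ldots,[g_n]$ with $p\mid|H^{g_1}\cap\ldots\cap H^{g_n}|$ already gives $\supp(A_{H^{g_1}\cap\ldots\cap H^{g_n}})=\supp(A_H)$, and Lemma~\ref{lem:suppAH} finishes. You cite Proposition~\ref{prop:splittingring} and Theorem~\ref{th:splittingring}, but the crucial uniqueness statement is Proposition~\ref{prop:cdsplittingring}; once you use it, no careful choice of cosets is needed at all.
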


\begin{proof}
	For (a), recall that $\deg(A_H)$ is the greatest $n$ such that $A_H^{[n]}\neq 0$, thus such that
	there exist distinct
	${}_H \! [1],{}_H \! [g_1],\ldots, {}_H \! [g_{n-1}]$ 
	with $|H\cap H^{g_1}\cap\ldots\cap H^{g_{n-1}}|$ divisible by~$p$. 
	To show (b), recall that $F_{A_H} (A_{H})  \cong
	\prod_{g\in S} \Psi^G_H (A^H_{H\cap H^g})$ by~Remark~\ref{rem:mackey}.
	It follows that $ F_{A_H} (A_{H})  \cong
	\1_{A_H}^{\times \deg (A_H)}$ in $\Mod{\stab G}{A_H}$ if and only if $$\prod_{g\in S} A^H_{H\cap H^g}
	\cong \kk^{\times \deg (A_H)}$$ in $\stab H$.
	So, $A_H$ is quasi-Galois in $\stab G$ if and only if $A_H\neq 0$ and for every $g\in G$, either $A^H_{H\cap H^g}=0$ or $A^H_{H\cap H^g}\cong \kk$ in $\stab H$. By~Remark~\ref{rem:kgstab}, this means 
	either $p\ndivides |H\cap H^g|$, or $p\divides|H\cap H^g|$ but $p\ndivides|H\cap H^g\cap H^{gh}|$ when $h\in H- H^g$.
	Equivalently, $p$ does not divide
	$|H\cap H^g\cap H^{gh}|$ whenever $g\in G-H$ and $h\in H- H^g$.
	For (c), suppose $A_H$ has constant degree $n$.
	By~Proposition~\ref{prop:cdsplittingring}, any indecomposable ring factor of $A_H^{[n]}$ is isomorphic to 
	the splitting ring $A_H^{*}$, so~Lemma~\ref{lem:AHn} shows that the quasi-Galois closure is given by $A_H^{*}\cong
	A_{H^{g_1}\cap\ldots\cap H^{g_{n}}}$ for any distinct ${}_H \! [g_1],\ldots, {}_H \! [g_{n}]$  with $|H^{g_1}\cap\ldots\cap H^{g_{n}}|$ divisible by~$p$. 
	Then, $\supp(A_H)=\supp(A_H^{*})=\supp(A_{H^{g_1}\cap\ldots\cap H^{g_{n}}})$ so $H^{g_1}\cap\ldots\cap H^{g_{n}}$ contains a $G$-conjugate of every elementary abelian $p$-subgroup of~$H$.  
	On the other hand, if there exist distinct $[g_1], \ldots, [g_n]$ in $H\backslash G$ such that $H^{g_1}\cap\ldots\cap H^{g_{n}}$ contains a $G$-conjugate of every elementary abelian $p$-subgroup of~$H$, then  $\supp(A_H^{[n]})=\supp(A_{H^{g_1}\cap\ldots\cap H^{g_{n}}})=\supp(A_H)$, so the degree of $A_H$ is constant.
\end{proof}

%\begin{example}
%	Let $p=2$ and suppose $G=S_3$ is the symmetric group on $3$ elements $\{1,2,3\}$. Consider the subgroup $H:=\{(),(12)\}\cong S_2$ of permutations fixing~$\{3\}$. Its conjugate subgroups in $G$ are the subgroups of permutations fixing $\{1\}$ and $\{2\}$ respectively, so $\norm H=\{()\}$. Then, $A_H$ is a faithful ring of degree $3$ in $\Db G$ with $\supp(A_H)=\Spc(\Db G)$.
%	On the other hand, $\supp(A_H^{[3]})$ contains only one point, so $A_H$ does not have constant degree in $\Db G$.
%	When considered in $\stab G$ however, the ring $A_H$ is quasi-Galois of degree $1$, since $H$ is strongly $p$-embedded in $G$. 
%\end{example}

\begin{example}
	Let $p=2$ and suppose $G=S_4$ is the symmetric group on~$\{1,2,3,4\}$. If $H\cong S_3$ is the subgroup of permutations fixing $\{4\}$, the ring $A_H$ in $\stab G$ has constant degree $2$.
	Indeed, the intersections $H\cap H^g$ with $g\in G-H$ each fix two elements of $\{1,2,3,4\}$ pointwise, and the intersections $H\cap H^{g_1}\cap H^{g_2}$ with $[1], [g_1], [g_2]$ distinct in $H\backslash G$ are trivial. The quasi-Galois closure of $A_H$ in $\stab G$ is $A_{S_2}$, with $S_2$ embedded in $H$.
\end{example}

\end{document}